\documentclass{article}

\usepackage[latin1]{inputenc}
\usepackage{subfigure}
\usepackage{amsthm}
\usepackage{amsmath}
\usepackage{amsfonts}
\usepackage{wasysym}
\usepackage{enumerate}
\usepackage[margin=1in]{geometry}
\usepackage{bigints}
\usepackage[usenames]{color}
\usepackage{setspace}
\usepackage{tikz}

\newtheorem{lem}{Lemma}
\newtheorem{cor}[lem]{Corollary}
\newtheorem{thm}[lem]{Theorem}

\newtheorem{defn}[lem]{Definition}

\numberwithin{equation}{section}

\DeclareMathOperator{\arm}{arml}
\DeclareMathOperator{\leg}{legl}
\DeclareMathOperator{\sqr}{Sq}
\DeclareMathOperator{\inv}{inv}
\DeclareMathOperator{\NW}{NW}
\DeclareMathOperator{\wt}{wt}
\DeclareMathOperator{\hinv}{hinv}
\DeclareMathOperator{\vinv}{vinv}
\DeclareMathOperator{\ainv}{ainv}

\newcommand{\bfi}{\begin{figure} \begin{center}}
\newcommand{\efi}{\end{center} \end{figure}}
\newcommand{\capt}{\caption}
\newcommand{\hs}[1]{\hspace{#1}}

\newcommand{\da}{\hs{-2pt}\downarrow}

\author{
Kenneth Barrese\\[-5pt]
\small Department of Mathematics, Michigan State University,\\[-5pt]
\small East Lansing, MI 48824-1027  {\tt baressek@math.msu.edu}\\[5pt]
Nicholas Loehr \thanks{This work was partially supported by a grant from the Simons Foundation
   (\#244398 to Nicholas Loehr).}\\[-5pt]
\small Department of Mathematics, Virginia Tech\\[-5pt]
\small Blacksburg, VA 24061-0123 {\tt nloehr@vt.edu}\\[-5pt]
\small and\\[-5pt]
\small Department of Mathematics, United States Naval Academy\\[-5pt]
\small Annapolis, MD 21402-5002 {\tt loehr@usna.edu}\\[5pt]
Jeffrey Remmel\\[-5pt]
\small Department of Mathematics, UCSD\\[-5pt]
\small La Jolla, CA, 92093-0112 {\tt jremmel@ucsd.edu}\\[5pt]
Bruce E. Sagan\\[-5pt]
\small Department of Mathematics, Michigan State University,\\[-5pt]
\small East Lansing, MI 48824-1027 {\tt sagan@math.msu.edu}
}

\title{Bijections on $m$-level Rook Placements}

\begin{document}
\maketitle

\begin{abstract}

Suppose the rows of a board are partitioned into sets of $m$ rows called levels.  An $m$-level rook placement is a subset of the board where no two squares are in the same column or the same level.  We construct explicit bijections to prove three theorems about such placements.   We start with two  bijections between  Ferrers boards having the same number of $m$-level rook placements.  The first generalizes  a map  by Foata and Sch\"{u}tzenberger and our proof applies to any Ferrers board. This bijection also preserves the $m$-inversion number statistic of an $m$-level rook placement, defined by Briggs and Remmel. The second generalizes work of Loehr and Remmel. This construction only works for a special class of Ferrers boards, but it yields a formula for calculating the rook numbers of these boards in terms of elementary symmetric functions. Finally we generalize another result of Loehr and Remmel giving a bijection between boards with the same hit numbers.  The second and third bijections involve the Involution Principle of Garsia and Milne.

\end{abstract}

\section{Introduction}
\label{sec:in}

Rook theory is the study of the numbers $r_{k}(B)$, which count the number of ways to place $k$ non-attacking rooks on a board $B$. It originated with Kaplansky and Riordan~\cite{kr:pra} who studied the connections between rook placements and elements of the symmetric group $S_n$. 
We will focus on a particular type of board:
a Ferrers board is a board where the columns are bottom justified and their heights form a weakly increasing sequence. Foata and Sch\"{u}tzenberger~\cite{fs:rpf} characterized the equivalence classes of Ferrers boards by a unique increasing representative. 
They did so by constructing explicit bijections between rook placements on boards in a class and rook placements on the unique representative board. The rook polynomial of a board is the generating function for the numbers $r_k(B)$ in the falling factorial basis for the ring of polynomials. The theorem of Foata and Sch\"{u}tzenberger was later proved as an elegant corollary to the Factorization Theorem of Goldman, Joichi, and White~\cite{gjw:rtI}, which gave a complete factorization of the rook polynomial of a Ferrers board over the integers. Loehr and Remmel~\cite{lr:rbr} constructed a bijection between rook placements on rook equivalent Ferrers boards using the Garsia-Milne Involution Principle~\cite{gm:mcb}, which also implied the Factorization Theorem.
Later in the paper, they presented a similar bijection for the sets counted by the hit numbers of rook equivalent Ferrers boards. Briggs and Remmel~\cite{br:mrn} generalized the notion of rook placements to $m$-level rook placements. These correspond to elements of $C_m \wr S_n$, the wreath product of the cyclic group of order $m$ with the symmetric group on $n$ elements, in the same way that traditional rook placements correspond to elements of $S_n$. Using these placements and the concept of flag descents developed by Adin, Brenti, and Roichman~\cite{abr:dnm}, Briggs and Remmel were able to generalize a formula of Frobenius to $C_m \wr S_n$.

The purpose of this paper is to generalize the bijection of Foata and Sch\"{u}tzenberger and those of Loehr and Remmel to $m$-level rook placements. The remainder of this section gives the background terminology necessary to begin this task. In Section~\ref{sec:FS} we generalize the bijection used by Foata and Sch\"{u}tzenberger. Although this bijection is the composition of many intermediary bijections, and is  therefore not direct, it does provide an explicit bijection between $m$-level rook placements on arbitrary $m$-level rook equivalent Ferrers boards.  
We will need this bijection again in Section~\ref{sec:LR hit number}. In Section~\ref{sec:q-analogues} we show that the bijection provided in Section~\ref{sec:FS} preserves the $m$-inversion number of an $m$-level rook placement, as defined by Briggs and Remmel.
In Section~\ref{sec:LR placement} we generalize a construction of Loehr and Remmel. In this case the bijection can only be specified for singleton boards, a subset of all Ferrers boards. However, the construction leads to an explicit calculation of the $m$-level rook numbers for such boards using elementary symmetric functions and Stirling numbers of the second kind. Furthermore, this bijection also preserves the $m$-inversion number of $m$-level rook placements. In Section~\ref{sec:LR hit number}, we generalize a second bijection of Loehr and Remmel, and in doing so prove that any two $m$-level rook equivalent Ferrers boards have the same hit numbers.  The last two bijections involve the Garsia-Milne Involution Principle~\cite{gm:mcb}. 
 Finally, in Section~\ref{sec:Open Problem} we present an open problem about counting the number of Ferrers boards in $m$-level rook equivalence classes.

A \emph{board} is any finite subset of $\mathbb{Z}^+ \times \mathbb{Z}^+$ where $\mathbb{Z}^+$ is the positive integers. Given an integer partition $0 \leq b_1 \leq b_2 \leq \dots \leq b_n$, the corresponding \emph{Ferrers board} is
$$ B = \{(i,j)\in \mathbb{Z}^+ \times \mathbb{Z}^+\  | \  \text{$1\le i\le n$ and $j \leq b_i$} \}. $$
Usually $B$ is denoted by $B=(b_1,b_2,\dots,b_n)$. Graphically, one represents a Ferrers board as an array of square cells, where the $i$th column contains $b_i$ cells. See the diagram on the left in Figure~\ref{place} for the board $(1,1,3,4)$. Throughout, we will use $(i,j)$ to denote the cell in the $i$th column and $j$th row of $B$. 
 Note that this is neither the English nor the French style of writing Ferrers diagrams, but is the standard convention in modern rook theory literature.  It is useful because we usually consider placing rooks on the board from left to right, and enumerating the number of such placements is facilitated by our convention.

\bfi
\begin{tikzpicture}[scale=.5]
\foreach \y in {0,1} 
   \draw (0,\y)--(4,\y);
\foreach \y in {2,3,} 
   \draw (2,\y)--(4,\y);
\foreach \x in {3,4}
   \draw (\x,0)--(\x,4);
\draw (0,0)--(0,1);
\draw (1,0)--(1,1);
\draw (2,0)--(2,3);
\draw (3,4)--(4,4);
\draw(-2,1.5) node{$B=(1,1,3,4)=$};
\end{tikzpicture}
\hs{50pt}
\begin{tikzpicture}[scale=.5]
\foreach \y in {0,1} 
   \draw (0,\y)--(4,\y);
\foreach \y in {2,3,} 
   \draw (2,\y)--(4,\y);
\foreach \x in {3,4}
   \draw (\x,0)--(\x,4);
\draw (0,0)--(0,1);
\draw (1,0)--(1,1);
\draw (2,0)--(2,3);
\draw (3,4)--(4,4);
\draw (1.5,.5) node{$R$};
\draw (2.5, 2.5) node{$R$};
\draw (3.5, 1.5) node{$R$};
\end{tikzpicture}
\capt{\label{place} A Ferrers board $B$ and a placement of three rooks on $B$.}
\efi

For any non-negative integer $k$, a \emph{placement of $k$ rooks} on $B$ is a subset of the cells of $B$ of cardinality $k$ which contains no more than one cell from any row or column of $B$. Graphically, this corresponds to placing rooks in the cells of $B$ so no two rooks attack  each other.   See the diagram on the right in Figure~\ref{place} for a placement of three rooks on $(1,1,3,4)$.

Henceforth we will assume that $m$ is a fixed positive integer. We define $\lceil j \rceil_m$ to be the least multiple of $m$ greater than or equal to $j$ and call it the \emph{$m$-ceiling} of $j$. Similarly, let $\lfloor j \rfloor_m$ be the greatest multiple of $m$ less than or equal to $j$, and call this the \emph{$m$-floor} of $j$. Given a positive integer $p$, let $L_p \subset \mathbb{Z}^+ \times \mathbb{Z}^+$ be defined by:
$$ L_p = \{(i,j)\in \mathbb{Z}^+ \times \mathbb{Z}^+\ |\ \lceil j \rceil_m = pm\}. $$
Then the \emph{$p$th level} of $B$ is $B \cap L_p$. Thus, the first level of $B$ consists of the first $m$ rows, the second level consists of the next $m$ rows, and so forth. Note that for $(i,j) \in B$, $\lceil j \rceil_m = pm$ if and only if $(i,j)$ is in the $p$th level of $B$.

For any non-negative integer $k$, an \emph{$m$-level rook placement of $k$ rooks} on $B$ is a subset of cardinality $k$ of the cells of $B$ which contains no more than one cell from any given level or column  of $B$.  See Figure~\ref{singletonfig} for three 2-level rook placements where thickened lines  demarcate where levels begin and end; the numbering of the boards can be ignored for now. An \emph{$m$-level rook} is a rook placed so that it is the only rook in its level and column. The \emph{$k$th $m$-level rook number of $B$} is
$$
r_{k,m}(B)=\text{the number of $m$-level rook placements of $k$ rooks on $B$.}
$$
Two boards are \emph{$m$-level rook equivalent} if their $m$-level rook numbers are equal for all $k$. Note that $m$-level rook placements are always rook placements.  Furthermore, when $m=1$ rook placements and $m$-level rook placements are equivalent.

The $i$th column of $B$ \emph{terminates in level $p$} if $p$ is the largest integer such that the $i$th column has non-empty intersection with $L_p$. A \emph{singleton board} is any Ferrers board such that, for each positive integer $p$, the set of all columns $b_i$  terminating in level $p$ contains at most one $i$ such that $b_i \not\equiv 0 \bmod{m}$. The Ferrers board on the left in Figure~\ref{singletonfig} is not a singleton board, as two different columns terminate in the second level without having $2$ cells in that level, while the Ferrers boards in the middle and on the right are singleton boards.

\bfi
\begin{tikzpicture}[scale=.7]
\draw[very thick] (0,0)--(4,0);
\foreach \y in {1} 
   \draw (0,\y)--(4,\y);
\draw[very thick] (1,2)--(4,2);
\foreach \y in {3} 
   \draw (1,\y)--(4,\y);
\foreach \x in {3,4}
   \draw (\x,0)--(\x,4);
\draw (0,0)--(0,1);
\draw (1,0)--(1,3);
\draw (2,0)--(2,3);
\draw[very thick] (3,4)--(4,4);
\draw(-1,1.5) node{$B=$};
\draw (2.5,1.5) node{$R$};
\draw (1.5, 2.5) node{$R$};
\draw (3.75,.75) node{$^1$};
\draw (3.75,1.75) node{$^2$};
\draw (2.75,.75) node{$^3$};
\draw (2.75,1.75) node{$^4$};
\draw (1.75,.75) node{$^5$};
\draw (1.75,1.75) node{$^6$};
\draw (.75,.75) node{$^7$};
\draw (3.75,2.75) node{$^1$};
\draw (3.75,3.75) node{$^2$};
\draw (2.75,2.75) node{$^3$};
\draw (1.75,2.75) node{$^4$};
\end{tikzpicture}
\hs{50pt}
\begin{tikzpicture}[scale=.7]
\draw[very thick] (0,0)--(4,0);
\foreach \y in {1} 
   \draw (0,\y)--(4,\y);
\draw[very thick] (1,2)--(4,2);
\foreach \y in {3} 
   \draw (2,\y)--(4,\y);
\foreach \x in {3,4}
   \draw (\x,0)--(\x,4);
\draw (0,0)--(0,1);
\draw (1,0)--(1,2);
\draw (2,0)--(2,4);
\draw[very thick] (2,4)--(4,4);
\draw(-1,1.5) node{$B_S=$};
\draw (1.5,1.5) node{$R$};
\draw (2.5, 3.5) node{$R$};
\draw (3.75,.75) node{$^1$};
\draw (3.75,1.75) node{$^2$};
\draw (2.75,.75) node{$^3$};
\draw (2.75,1.75) node{$^4$};
\draw (1.75,.75) node{$^5$};
\draw (1.75,1.75) node{$^6$};
\draw (.75,.75) node{$^7$};
\draw (3.75,2.75) node{$^1$};
\draw (3.75,3.75) node{$^2$};
\draw (2.75,2.75) node{$^3$};
\draw (2.75,3.75) node{$^4$};
\end{tikzpicture}
\hs{50pt}
\begin{tikzpicture}[scale=.7]
\foreach \y in {1,3}
    \draw (0,\y)--(2,\y);
\foreach \y in {0,2,4}
    \draw[very thick] (0,\y)--(2,\y);
\foreach \y in {5,7}
    \draw (1,\y)--(2,\y);
\draw[very thick] (1,6)--(2,6);
\draw (0,0)--(0,4);
\draw (1,0)--(1,7);
\draw (2,0)--(2,7);
\draw(-1,1.5) node{$l(B_S)=$};
\draw (.5,3.5) node{$R$};
\draw (1.5, 5.5) node{$R$};
\draw (1.75,.75) node{$^1$};
\draw (1.75,1.75) node{$^2$};
\draw (1.75,2.75) node{$^3$};
\draw (1.75,3.75) node{$^4$};
\draw (1.75,4.75) node{$^5$};
\draw (1.75,5.75) node{$^6$};
\draw (1.75,6.75) node{$^7$};
\draw (.75,.75) node{$^1$};
\draw (.75,1.75) node{$^2$};
\draw (.75,2.75) node{$^3$};
\draw (.75,3.75) node{$^4$};
\end{tikzpicture}
\capt{\label{singletonfig} On the left, a placement of two 2-level rooks on $B$. In the middle, the corresponding placement from Lemma~\ref{singleton} of two 2-level rooks on singleton board $B_S$. On the right, the placement on $l(B_S)$ from Lemma~\ref{l operator}.}
\efi

\section{Rook equivalence and bijections}
\label{sec:FS}

\subsection{Reduction to singleton boards}

In order to produce bijections between $m$-level rook placements on Ferrers boards, it is convenient to restrict our attention to singleton boards. In order to do this we prove the following two lemmas. First we show that for every Ferrers board there is a unique singleton board which has the same number of cells at each level. Then we prove that there is a bijection between the rook placements on a Ferrers board and those on the singleton board guaranteed in the first lemma. These lemmas together imply that every Ferrers board is $m$-level rook equivalent to a singleton board and that there is an explicit bijection between the corresponding rook placements.

\begin{lem}
\label{singletonexist}
Given a Ferrers board $B$, there exists a unique singleton board $B_S$ which has the same number of cells at each level as $B$.
\end{lem}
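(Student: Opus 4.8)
The plan is to read off from $B$ its sequence of \emph{level cell counts} $c_p = c_p(B)$, where $c_p$ denotes the number of cells of $B$ lying in level $p$, and to argue that this sequence determines a singleton board uniquely. The only structural fact about $B$ that I would record first is a monotonicity statement: since each column of a Ferrers board contributes $\min(m,\max(0,b_i-(p-1)m))$ cells to level $p$, and this quantity is weakly decreasing in $p$ for every fixed column, the sequence $c_1\ge c_2\ge\cdots$ is weakly decreasing. This is what will later guarantee that the board I construct is genuinely a Ferrers board.

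For uniqueness I would analyze an arbitrary singleton board $B'$ with level cell counts $c_p$, one level at a time. Let $n_p$ be the number of columns of $B'$ meeting level $p$, i.e.\ having height exceeding $(p-1)m$. Of these, $n_{p+1}$ pass completely through level $p$ and contribute $m$ cells each, while the remaining $n_p-n_{p+1}$ terminate in level $p$. The singleton hypothesis says at most one terminating column fails to be full, so (when $n_p>n_{p+1}$) their total contribution is $m(n_p-n_{p+1}-1)+s_p$ for a single ``short'' value $s_p\in\{1,\dots,m\}$. Summing gives $c_p=m(n_p-1)+s_p$ with $1\le s_p\le m$, which forces $n_p=\lceil c_p/m\rceil$ and $s_p=c_p-m(n_p-1)$. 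Since $n_p$ and $s_p$ are thus recovered from the $c_p$ alone, and since they pin down the entire multiset of column heights of $B'$ (namely, for each $p$, one height $(p-1)m+s_p$ and $n_p-n_{p+1}-1$ heights equal to $pm$), at most one singleton board can realize the prescribed cell counts.

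For existence I would run the recipe in reverse: set $n_p:=\lceil c_p/m\rceil$ and $s_p:=c_p-m(n_p-1)$, use the monotonicity of $(c_p)$ to see that $(n_p)$ is weakly decreasing, and then define $B_S$ by including, for each $p$ with $n_p>n_{p+1}$, one column of height $(p-1)m+s_p$ together with $n_p-n_{p+1}-1$ columns of height $pm$. It is then routine to verify that $B_S$ is a Ferrers board, that at most one terminating column per level is short (so $B_S$ is a singleton board), and that recomputing its level cell counts returns $m(n_p-1)+s_p=c_p$, matching $B$.

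The main obstacle I anticipate is bookkeeping at the boundary cases rather than any deep idea. I must treat uniformly the case $s_p=m$ (where the ``short'' column is actually full, so there is no genuine short column) and the case $n_p=n_{p+1}$ (where no column terminates in level $p$ at all), checking that in both the identity $c_p=m(n_p-1)+s_p$ and the reconstruction still hold and do not secretly admit a second singleton board. The remaining small points needing care are that $\lceil c_p/m\rceil$ weakly decreasing really yields a legitimate partition, and that a column of height exactly $pm$ is counted as terminating in level $p$ rather than $p+1$.
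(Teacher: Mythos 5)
Your uniqueness argument is sound and is essentially the paper's: the paper likewise observes that level $p$ of a singleton board with $c_p=(n_p-1)m+s_p$ cells must consist of one short column of $s_p$ cells followed by full columns, and your column-height bookkeeping is just a notational variant of that. The gap is in the existence half, and it sits exactly in the boundary case you flagged but did not resolve. The only structural fact you record about $B$ is the plain monotonicity $c_1\ge c_2\ge\cdots$, and that is too weak to carry out the verification you call routine: when $n_p=n_{p+1}$, your reconstructed board has exactly $mn_p=m\lceil c_p/m\rceil$ cells in level $p$, so ``recomputing its level cell counts returns $c_p$'' requires $m\mid c_p$, which monotonicity does not give. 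Concretely, take $m=2$ and the weakly decreasing sequence $(c_1,c_2)=(3,3)$; then $n_1=n_2=2$, $s_1=s_2=1$, and your recipe outputs the board with column heights $(3,4)$, whose level counts are $(4,3)\neq(3,3)$. Of course no Ferrers board has level counts $(3,3)$ when $m=2$, but nothing you recorded excludes such a sequence, and excluding it is precisely the missing step.

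The fact you need, and the one the paper's proof invokes, is stronger than monotonicity: every column of $B$ that meets level $p+1$ passes completely through level $p$, and there are at least $\lceil c_{p+1}/m\rceil$ such columns, so $c_p\ge m\lceil c_{p+1}/m\rceil$, i.e.\ $\lfloor c_p/m\rfloor\ge\lceil c_{p+1}/m\rceil$ (in the paper's notation, $\lfloor c_p\rfloor_m\ge\lceil c_{p+1}\rceil_m$). With this inequality your argument closes: if $\lceil c_p/m\rceil=\lceil c_{p+1}/m\rceil$, then $\lceil c_p/m\rceil\le\lfloor c_p/m\rfloor$ forces $m\mid c_p$, so the level-count verification succeeds in the case $n_p=n_{p+1}$ as well, while the case $n_p>n_{p+1}$ goes through by the telescoping count you indicate. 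So the proof is repairable by replacing your recorded monotonicity statement with this stronger one-sentence fact; as written, however, the existence half does not stand.
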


\begin{proof}
Let $B$ have $l_p$ cells in the $p$th level. In order for $B_S$ to be a singleton board with $l_p$ cells in the $p$th level, the cells of the $p$th level must be arranged uniquely as follows. If $l_p = cm+r$ with $0 \leq r < m$, then level $p$ of $B_S$ must have one column with $r$ cells followed on the right by $c$ columns with a full $m$ cells in the level. This is because a singleton board may have at most one column which intersects a given level non-trivially in fewer than $m$ cells. Thus $B_S$ must be unique if it exists.

In order to show that $B_S$ exists, we shall construct it. Arrange each level as specified above and line up the furthest right column in each level to create the furthest right column of $B_S$. This yields a Ferrers board because every column which has any cells in the $p$th level of $B$ must have a full $m$ cells in the $(p-1)$st level of $B$. Thus the total number of columns in the $p$th level of $B_S$ will be less than or equal to the number of columns in the $(p-1)$st level of $B_S$ containing $m$ cells at that level. Hence a singleton board $B_S$ exists and is unique.
\end{proof}

Ignoring the rook placement, Figure~\ref{singletonfig} shows a board $B$ and its corresponding board $B_S$. Since we know that an arbitrary Ferrers board $B$ has the same number of cells at each level as a unique singleton board $B_S$, we wish to provide an explicit bijection between rook placements on the two boards. In order to do so we require the following numbering on a Ferrers board.

\begin{defn}
\label{level numbering}
A \emph{level numbering} of board $B$ assigns a number to each cell of $B$ in the following way. Proceeding level by level in $B$, number the cells in the level by numbering each column from bottom to top, starting with the rightmost column and working left. In each level begin the numbering with 1. 
\end{defn}

Figure~\ref{singletonfig} presents two examples of this numbering, on the left and middle boards, and also illustrates the bijection of the next lemma. 

\begin{lem}
\label{singleton}
Given a Ferrers board $B$, there is an explicit bijection between $m$-level rook placements of $k$ rooks on $B$ and $m$-level rook placements of $k$ rooks on $B_S$, where $B_S$ is as constructed in Lemma~\ref{singletonexist}.
\end{lem}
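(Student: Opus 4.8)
The plan is to build the bijection level by level, processing a placement's occupied levels from the top downward. The guiding observation is that an $m$-level rook placement has at most one rook per level, so such a placement on $B$ is determined by its set of occupied levels together with one chosen cell in each occupied level, subject only to the global requirement that the occupied columns be distinct. By Lemma~\ref{singletonexist}, $B$ and $B_S$ have the same number $l_p$ of cells in each level $p$, and the level numbering of Definition~\ref{level numbering} lists the cells of level $p$ in either board as $1,2,\dots,l_p$. The naive idea of matching cells with equal level number fails, because it need not respect the column condition: a rook in a higher level may block different columns in the two boards. I would therefore keep each rook in its level but refine the matching so that it records a cell's rank among the cells that remain \emph{available} once the rooks in higher levels have been placed.

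Concretely, list the occupied levels of a given placement as $p_1 > p_2 > \cdots > p_k$ and process them in this order. Having already placed the images of the rooks of levels $p_1,\dots,p_{s-1}$ on $B_S$, call a cell of level $p_s$ \emph{available} if its column is not used by those higher rooks, and define availability on $B$ analogously using the original higher rooks. If the rook of level $p_s$ is the $j$th available cell of level $p_s$ on $B$ in the level numbering, send it to the $j$th available cell of level $p_s$ on $B_S$. By construction the image occupies a fresh column in level $p_s$, so the output is a legal placement, and the same recipe applied to a placement on $B_S$ produces the inverse.

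The crux is to show that at every stage level $p_s$ has the same number of available cells on $B$ as on $B_S$, so that the $j$th available cell exists in both boards and the correspondence is a genuine bijection. This is where the Ferrers structure is essential: a column carrying a rook in a higher level $p_t$ reaches level $p_t$, hence is full, with exactly $m$ cells, in every level strictly below $p_t$. Thus each already-placed higher rook forbids exactly $m$ cells of level $p_s$, these forbidden cells lie in distinct columns, and the number of available cells equals $l_{p_s}-m(s-1)$ on both boards, a quantity depending only on the data ($l_p$ and the occupied levels) that $B$ and $B_S$ share. This also explains why the levels must be read from the top down: a rook in a lower level need not fill a full column at the current level, so the forbidden count would not be controlled if one worked upward.

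The step I expect to require the most care is the bookkeeping that converts this count-matching fact into a clean bijection, namely verifying simultaneously that the forward map always lands in a legal placement on $B_S$ and that it and its top-down inverse compose to the identity; both should follow by induction on $k$ once the available-cell counts are known to agree. I would close by testing the recipe on Figure~\ref{singletonfig}: the second-level rook keeps its level number while the first-level rook is displaced to the free column, reproducing the displayed placement on $B_S$.
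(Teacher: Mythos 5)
Your proof is correct, but it takes a genuinely different route from the paper's. The paper first places every rook at the cell of $B_S$ carrying the same level number as its cell in $B$, and then repairs the resulting column collisions locally: whenever a rook moves from column $i$ of $B$ to column $i'$ of $B_S$, every other rook in a column of the interval $(i,i']$ is shifted one column to the left, preserving its row; invertibility is disposed of by remarking that the appropriate right shifts undo this. You instead process the occupied levels from the top down and match each rook's rank among the cells still \emph{available}, i.e., those whose columns carry no higher rook. Both arguments hinge on the same Ferrers-board fact --- a column containing a rook in a higher level is full, with exactly $m$ cells, at every lower level --- which for you yields the invariant that level $p_s$ has $l_{p_s}-m(s-1)$ available cells on both boards, and for the paper guarantees that collisions involve exactly two rooks and that the one-column left shifts stay on the board. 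What the paper's map buys is minimal disturbance (most rooks literally keep their numbered cell, which is what Figure~\ref{singletonfig} illustrates), at the cost of a rather terse bijectivity argument; what yours buys is that legality of the image and invertibility are automatic consequences of rank matching plus the counting invariant, so your bijectivity proof is cleaner. One caution if your map were spliced into the paper: Section~\ref{sec:q-analogues} proves that the \emph{paper's} bijection preserves $\inv_m$ by tracking the effect of the left shifts on $\hinv_p$, and you have not shown your map coincides with the paper's (it does on the figure's example). That said, your construction admits an even quicker proof of the same fact: since forbidden cells to the left of the rook come in full blocks of $m$, one gets $\hinv_{p_s}(\pi)=l_{p_s}-m(s-1)-j$ where $j$ is the availability rank, which your map preserves by definition.
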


\begin{proof}
Give both $B$ and $B_S$ a level numbering as shown in Figure~\ref{singletonfig}. Since both boards have the same number of cells in each level, corresponding levels will each be numbered with the same set of numbers. Given any $m$-level rook placement on $B$, place rooks on $B_S$ initially so that each rook occupies the same numbered cell in the same level as it does in $B$. This may not provide an $m$-level rook placement on $B_S$ since two rooks could end up in the same column, so we will modify it as follows. 

Notice that if a rook in column $i$ and level $p$ of $B$ is not in the same cell in $B_S$, then column $i$ must be to the left of a column of $B$ that intersects $L_p$ in less than $m$ cells. Furthermore, if the rook ends up in column $i'$ in $B_S$, then all  columns in the interval $[i,i']$ have a full $m$ cells in levels below $p$ in $B$.  Thus, if any of the rooks that move create a column with two or more rooks, there will be exactly two rooks in the column and the upper rook will have moved while the lower rook remained stationary.  To rectify the situation, whenever a rook is moved from column $i$ in $B$ to column $i'$ in $B_S$, move all other rooks in columns in the interval $(i,i']$ one column to the left, preserving their row. 
This is possible since, in both $B$ and $B_S$, these columns must contain $m$ cells in all levels lower than the upper rook in order for the upper rook to have been in that column in $B$. Rearranging the rooks at each level in this fashion provides a function from $m$-level rook placements on $B$ to $m$-level rook placements on $B_S$. Figure~\ref{singletonfig} illustrates this map on a rook placement, including moving a lower rook one column to the left.

To see that this is a bijection, use the level numbering to produce a set of rooks on $B$ from those on $B_S$.  All the rooks will return to their initial positions once the appropriate right shift is applied.  Similarly one can show that applying the map first to $B_S$ and then to $B$ is the identity.  Thus we have a bijection between $m$-level rook placements on $B$ and $m$-level rook placements on $B_S$.
\end{proof}

Lemma~\ref{singletonexist} and Lemma~\ref{singleton} guarantee that every Ferrers board is $m$-level rook equivalent to a singleton board. Additionally, there is an explicit bijection between $m$-level rook placements on the two boards. This permits us to restrict our attention to singleton boards henceforth.

\subsection{The $l$-operator}
\label{subsec:l op}

Transposition of boards plays a central role in the Foata-Sch\"utzenberger construction of bijections between rook-equivalent Ferrers boards when $m=1$.  We will need a generalization of this operation for arbitrary $m$ and this is given in the next definition.

\begin{defn}
Given a Ferrers board $B$, the \emph{$l$-operator} applied to $B$ is defined as follows. If $t$ is the largest index of a non-empty level of $B$ and the number of cells in  the $p$th level of $B$ is $l_p$, then
$$l(B) = (l_t, l_{t-1}, \dots , l_1).$$
\end{defn}

Figure~\ref{singletonfig} contains an example board $B_S$ as well as $l(B_S)$.  The fact that $l(B)$ is a Ferrers board comes from the proof of Lemma~\ref{singletonexist}.
In particular, if $B$ is a Ferrers board then its $p$th level must fit above its $(p-1)$st level which implies
$$
\lfloor l_p \rfloor_m\le  \lfloor l_{p-1} \rfloor_m,
$$
with strict inequality if $l_p \not\equiv 0 \bmod{m}$.
It follows  that $l(B)$ is a weakly increasing sequence and so
$l(B)$ is a Ferrers board and, because of the strict inequality for non-multiples of $m$, a singleton board.

To see that the $l$-operator is a generalization of transposition, note that if $m=1$ then the levels of $B$ are individual rows and these become the columns of $l(B)$. Furthermore, when restricted to the set of singleton boards the $l$-operator is an involution. This is shown in Proposition 7.4 of~\cite{blrs:mrp}.  Thus, the $l$-operator is a surjection from the set of Ferrers boards onto the set of singleton boards with $B=l(l(B))$ when $B$ is singleton. We now provide a bijection between $m$-level rook placements on $B$ and $m$-level rook placements on $l(B)$ to generalize the well-known bijection for transposition.

\bfi
\begin{tikzpicture}[scale=.5]
\fill[lightgray] (0,0) rectangle (3,1);
\fill[lightgray] (2,1) rectangle (3,2);
\draw[very thick] (0,0)--(4,0);
\foreach \y in {1} 
   \draw (0,\y)--(4,\y);
\draw[very thick] (2,2)--(4,2);
\foreach \y in {3} 
   \draw (2,\y)--(4,\y);
\foreach \x in {3,4}
   \draw (\x,0)--(\x,4);
\draw (0,0)--(0,1);
\draw (1,0)--(1,1);
\draw (2,0)--(2,3);
\draw[very thick] (3,4)--(4,4);
\draw[very thick, dashed] (3.5,2) -- (3.5,4);
\end{tikzpicture}
\capt{\label{armleg}  The dashed line goes through the cells counted by the $2$-arm length of the fourth column and first level, and the shaded cells are counted by the corresponding $2$-leg length.}
\efi

\begin{lem}
\label{l operator}
Given a singleton board $B$ and a non-negative integer $k$, there is an explicit bijection between $m$-level rook placements of $k$ rooks on $B$ and $m$-level rook placements of $k$ rooks on $l(B)$.
\end{lem}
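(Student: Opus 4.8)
The plan is to construct an explicit cell-by-cell bijection $\phi$ that generalizes the reflection-across-the-diagonal map underlying the $m=1$ transpose bijection, exploiting that the $l$-operator turns the $p$th level of $B$ into the $(t-p+1)$st column of $l(B)$, where $t$ is the index of the top level. Concretely, I would give $B$ its level numbering from Definition~\ref{level numbering}, so that the cells of level $p$ carry the labels $1,2,\dots,l_p$. Since column $t-p+1$ of $l(B)$ has exactly $l_p$ cells, define $\phi$ to send the cell of level $p$ labeled $\nu$ to the $\nu$th cell from the bottom of column $t-p+1$ of $l(B)$. Because the level numbering is a bijection between labels $\{1,\dots,l_p\}$ and cells of level $p$, the map $\phi$ is automatically a bijection on cells, and its inverse is described the same way.

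It then remains to check that $\phi$ carries $m$-level rook placements to $m$-level rook placements in both directions, i.e. that it sends distinct-level, distinct-column sets to distinct-column, distinct-level sets. One of the two conditions is immediate: two cells in the same level $p$ of $B$ map into the same column $t-p+1$ of $l(B)$, and conversely, so ``at most one rook per level of $B$'' is equivalent to ``at most one rook per column of $l(B)$.''

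The substance of the proof is the second condition, namely that every cell lying in column $i$ of $B$ maps into a single level of $l(B)$ depending only on $i$. I would prove the precise statement that the image of any cell of column $i$ lies in level $n-i+1$ of $l(B)$, where $n$ is the number of columns of $B$; equivalently, writing $\nu$ for the level number of a cell in column $i$ and level $p$, that $\lceil\nu\rceil_m/m=n-i+1$ independently of $p$. This is exactly where the singleton hypothesis enters. Because the column heights increase, the columns meeting a fixed level $p$ form a right-justified block $\{i_0,i_0+1,\dots,n\}$, all of which are full in level $p$ except for at most one partial column, which (as in the proof of Lemma~\ref{singletonexist}) must be the leftmost one $i_0$. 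Since the level numbering counts cells from the rightmost column leftward in blocks of $m$, the label $\nu$ of any cell of a full column $i$ satisfies $\lceil\nu\rceil_m/m=n-i+1$, and a short separate check handles the unique partial column $i_0$, for which $\lceil\nu\rceil_m/m=n-i_0+1$ as well. Thus the level of the image in $l(B)$ records the column index of $B$, and distinct columns of $B$ produce distinct levels of $l(B)$.

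Combining the two conditions shows that $\phi$ and its inverse each preserve non-attacking placements, so $\phi$ restricts to a bijection between $m$-level rook placements of $k$ rooks on $B$ and on $l(B)$. For bijectivity I would note that $l$ is an involution on singleton boards (Proposition~7.4 of~\cite{blrs:mrp}) and that $l(B)$ is itself singleton, so the map built the same way from $l(B)$ is the inverse of $\phi$; alternatively, invertibility is already manifest from the cell-level description. I expect the main obstacle to be the level-independence claim of the third paragraph: this is precisely the step that fails for a non-singleton board, where two rooks in different columns could acquire the same rank-from-the-right in their respective levels and collide in $l(B)$, so the argument must lean on the structural description of singleton levels extracted in Lemma~\ref{singletonexist}.
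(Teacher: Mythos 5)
Your proposal is correct and is essentially the paper's own proof: the same map (level numbering of $B$ matched to bottom-to-top numbering of column $t-p+1$ of $l(B)$), with the same two checks that levels of $B$ correspond to columns of $l(B)$ and, via the singleton hypothesis, columns of $B$ correspond to levels of $l(B)$. Your explicit computation that a cell in column $i$ lands in level $n-i+1$ (i.e.\ $\lceil\nu\rceil_m/m=n-i+1$ for its label $\nu$) just fleshes out the step the paper disposes of with the phrase ``since $B$ is a singleton board,'' and your appeal to $l$ being an involution on singleton boards for the inverse is likewise consistent with the paper.
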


\begin{proof}
Give $B$ a level numbering, then number the columns of $l(B)$ from bottom to top beginning with the number $1$ in each column. Note that in this case the numbering of a level of $B$ will consist of the same set of numbers as the numbering of the corresponding column of $l(B)$.
Assume that $B$ has $t$ non-empty levels.  For a given $m$-level rook placement of $k$ rooks on $B$, place rooks on $l(B)$ in the following way. If a rook was in the cell numbered $n$ of level $p$ in $B$, then place a rook in the cell numbered $n$ in column $t-p+1$ in $l(B)$.  See Figure~\ref{singletonfig} for an example of this map for a 2-level placement.

We must show that this gives a valid $m$-level rook placement on $l(B)$. If two rooks end up in the same column of $l(B)$ they must have originated in the same level of $B$, contradicting having an $m$-level rook placement on $B$. Similarly, if two rooks end up in the same level of $l(B)$, then they must have originated in the same column of $B$, since $B$ is a singleton board.

The inverse of this map acts as follows.  If a rook is in the cell numbered $a$ 
of column $t-p+1$ in $l(B)$ then it is placed in the cell numbered $a$ in level $p$ of $B$.  The proof that this gives a rook placement is similar to the one in the previous paragraph and so is omitted.
\end{proof}

Note that Lemma~\ref{singleton} and Lemma~\ref{l operator} combine to provide an explicit bijection between $m$-level rook placements on any Ferrers board $B$ and on its $m$-transpose, $l(B)=l(B_S)$.

\subsection{The local $l$-operator}

For any set $S$, let $\# S $ be the cardinality of  $S$. Given a column $i$ and a level $p$ define the \emph{$m$-arm length} of column $i$, level $p$ by
$$\arm_m(i,p) = \# \{ (i,j') \in B \ |\ (i,j') \text{ is strictly above level } p\}.$$
In Figure~\ref{armleg} the cells counted by the $2$-arm length of column $4$, level $1$ have a dashed line through them. (Reflecting our boards to put them in English notation will result in the arm being the usual set of squares when $m=1$.) We let $\arm_m(i,p)= \infty$ if the number of columns in $B$ is less than $i$, for reasons detailed in Lemma~\ref{perm:lem}.

Similarly, define the \emph{$m$-leg length} of column $i$, level $p$ to be
$$ \leg_m(i,p) = \# \{ (i',j') \in B \ |\  \text{$(i',j')$  is in level $p$ and $i'<i$}\}.$$
The cells counted by the $2$-leg length of column $4$, level 1 are shaded in Figure~\ref{armleg}. As before, this is equivalent to the usual notion of leg length in the $m=1$ case. We also let $\leg_m(i,0)= \infty$ by convention.

Since the $l$ operation generalizes the transposition of a Ferrers board, one would expect that some sort of local $l$ operation would be the appropriate generalization of the local transposition introduced by Foata and Sch\"{u}tzenberger. This is indeed the case, and we define the local $l$ operation as follows.

Given a Ferrers board $B$ with non-empty intersection of the $i$th column and $p$th level, let $B_{i,p}$ denote the subboard of $B$ consisting of all cells in or above the $p$th level and in or to the left of the $i$th column: see Figures~\ref{not permissible} and~\ref{permissible} for examples.   Note that if $B$ is a singleton board, then $B_{i,p}$ is also, because the set of rows in  level $p'$  of $B_{i,p}$ will be the same as the set of rows in level $p+p'-1$ of $B$.  If $B$ is a Ferrers board then the \emph{local $l$ operation at $(i,p)$} is the result of applying the $l$ operator to the subboard $B_{i,p}$ and leaving the rest of $B$ fixed. We will denote the resulting board by $l_{i,p}(B)$.

As defined above $l_{i,p}(B)$ may not be a Ferrers board, let alone a singleton board. We now develop a pair of conditions to determine if $l_{i,p}(B)$ will be a singleton board.

\bfi
\begin{tikzpicture}[scale=.5]
\draw (-1,2.5) node{$B=$};
\fill[lightgray] (1,2) rectangle (3,4);
\draw[very thick] (0,0)--(4,0);
\draw (0,1)--(4,1);
\foreach \y in {2,4}
    \draw[very thick] (1,\y)--(4,\y);
\draw (1,3)--(4,3);
\draw (3,5)--(4,5);
\draw (0,0)--(0,1);
\foreach \x in {1,2}
    \draw (\x,0)--(\x,4);
\foreach \x in {3,4}
    \draw (\x,0)--(\x,5);
\end{tikzpicture}
\hs{50pt}
\begin{tikzpicture}[scale=.5]
\fill[lightgray] (2,2) rectangle (3,6);
\draw (-1,2.5) node{$l_{3,2}(B)=$};
\draw[very thick] (0,0)--(4,0);
\draw (0,1)--(4,1);
\draw[very thick] (1,2)--(4,2);
\foreach \y in {3,5}
    \draw (2,\y)--(4,\y);
\draw[very thick] (2,4)--(4,4);
\draw[very thick] (2,6)--(3,6);
\draw (0,0)--(0,1);
\draw (1,0)--(1,2);
\foreach \x in {2,3}
    \draw (\x,0)--(\x,6);
\draw (4,0)--(4,5);
\end{tikzpicture}
\capt{\label{not permissible} On the left, $B_{3,2}$ is shaded within $B=(1,4,4,5)$.  Notice that $l_{3,2}$ is not permissible for $B$ since $\lfloor \arm_m(4,2) \rfloor_m<\leg_m(3,2)$, which means $l_{3,2}(B)$ will not be a singleton board. On the right the shaded cells in $l(B_{3,2})$ illustrate this; in this case $l_{3,2}(B)$ is not even a Ferrers board.}
\efi

\begin{defn}
\label{perm:def}
The operation $l_{i,p}$ is \emph{permissible} for a singleton board $B$ if
$$
\arm_m(i,p) \leq \lfloor \leg_m(i,p-1) \rfloor_m
\quad \text{and}\quad
\leg_m(i,p) \leq \lfloor \arm_m(i+1,p) \rfloor_m.
$$
\end{defn}

See Figure~\ref{not permissible} for an example of a local $l$-operation not permissible for the given board, and Figure~\ref{permissible} for a local $l$-operation which is permissible.

\begin{lem}
\label{perm:lem}
Let a singleton Ferrers board $B$ have a non-empty intersection of the $i$th column and $p$th level.  
Then $l_{i,p}$ is  permissible for $B$ if and only if $l_{i,p}(B)$ is a singleton Ferrers board.
\end{lem}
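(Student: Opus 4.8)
The plan is to localize the two ways in which the construction can go wrong. Observe first that $l_{i,p}(B)$ agrees with $B$ outside the \emph{active region} $R$ consisting of all cells in columns $\le i$ and levels $\ge p$, while inside $R$ it equals $l(B_{i,p})$ placed right-justified to column $i$ and bottom-justified to level $p$. Since $B$ is a singleton board and, by the facts established for the $l$-operator, $l(B_{i,p})$ is itself a singleton Ferrers board, neither the Ferrers property nor the singleton property of $l_{i,p}(B)$ can be violated in the interior of $R$ or in the untouched part of $B$. Hence it suffices to analyze the two \emph{seams}: the horizontal seam between level $p-1$ and level $p$, and the vertical seam between column $i$ and column $i+1$. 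I will show that the first inequality of Definition~\ref{perm:def} governs the horizontal seam and the second governs the vertical seam.

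For the horizontal seam I would first record the shape of the transposed block. Because $B$ is a Ferrers board whose $i$th column meets level $p$, that column is full through level $p-1$, and the tallest column of $B_{i,p}$ (namely column $i$) has height $c_i + \arm_m(i,p)$, where $c_i$ is the number of cells of column $i$ in level $p$. Consequently $l(B_{i,p})$ has exactly $\tau = \lceil c_i + \arm_m(i,p)\rceil_m/m$ columns, occupying positions $i-\tau+1,\dots,i$ and each rising out of level $p$. For $l_{i,p}(B)$ to remain bottom-justified, and so a Ferrers board, each of these columns must sit atop a full, height-$m$ column in level $p-1$; column $i$ supplies its own support, while the $\tau-1$ columns to its left must draw on the full columns lying strictly to the left of column $i$, of which there are exactly $\lfloor \leg_m(i,p-1)\rfloor_m/m$. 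The requirement $\tau-1 \le \lfloor\leg_m(i,p-1)\rfloor_m/m$ simplifies, using $c_i=m$ whenever $\arm_m(i,p)>0$ and the fact that $\lfloor\leg_m(i,p-1)\rfloor_m$ is a multiple of $m$, to exactly $\arm_m(i,p)\le\lfloor\leg_m(i,p-1)\rfloor_m$. The convention $\leg_m(i,0)=\infty$ makes this vacuous when $p=1$, matching the fact that the block then rests on the floor.

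For the vertical seam I would run the mirror-image argument, exploiting that $l$ is a transpose-like operation. After transposition the tallest column of the block sits at position $i$ with height $(p-1)m + \leg_m(i,p) + c_i$, and $l_{i,p}(B)$ meshes across this seam as a singleton Ferrers board exactly when that column fits under column $i+1$; tracking the full-level requirement with the $m$-floor turns this into $\leg_m(i,p)\le\lfloor\arm_m(i+1,p)\rfloor_m$, the convention $\arm_m(i+1,p)=\infty$ rendering it vacuous when column $i+1$ is absent. Assembling the biconditional then proceeds in two directions: for the forward implication the two inequalities make both seams valid, so $l_{i,p}(B)$ is a singleton Ferrers board; for the converse I would argue contrapositively, since a failure of the first inequality leaves the leftmost column of the block without full support and opens a gap (destroying bottom-justification), while a failure of the second makes column $i$ overtop column $i+1$ as in Figure~\ref{not permissible}, and in either case $l_{i,p}(B)$ is not even a Ferrers board. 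The step I expect to be most delicate is verifying that the \emph{singleton} property, and not merely the Ferrers property, survives each seam when the inequalities hold: concretely, one must check that the block's unique non-full column at each level never collides with a non-full column coming from $B$ on the other side of a seam, and that the $m$-floor/$m$-ceiling bookkeeping—only height-$m$ columns count as support—is carried through consistently, including the degenerate infinite-arm and infinite-leg conventions.
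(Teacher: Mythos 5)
Your seam decomposition is in fact the same skeleton as the paper's proof, which reduces the lemma to three conditions on $l_{i,p}(B)$: (a) the lowest row of level $p$ is weakly shorter than the highest row of level $p-1$; (b) column $i$ is weakly shorter than column $i+1$; and (c) if columns $i$ and $i+1$ terminate in the same level, then column $i+1$'s height is a multiple of $m$. Your horizontal-seam argument is a correct counting version of the paper's treatment of (a). The genuine gap is at the vertical seam, in your contrapositive. You claim that if $\leg_m(i,p)\le\lfloor\arm_m(i+1,p)\rfloor_m$ fails, then column $i$ ``overtops'' column $i+1$ and $l_{i,p}(B)$ ``is not even a Ferrers board.'' That is false in the range $\lfloor\arm_m(i+1,p)\rfloor_m<\leg_m(i,p)\le\arm_m(i+1,p)$, which is nonempty exactly when $\arm_m(i+1,p)\not\equiv 0\bmod m$: there the new column $i$, of height $pm+\leg_m(i,p)$, still fits under column $i+1$, of height $pm+\arm_m(i+1,p)$, so $l_{i,p}(B)$ is a perfectly good Ferrers board. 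What fails is the singleton property: in this range $\leg_m(i,p)$ cannot be a multiple of $m$ (a multiple of $m$ exceeding $\lfloor\arm_m(i+1,p)\rfloor_m$ would exceed $\arm_m(i+1,p)$ itself), so columns $i$ and $i+1$ terminate in the same level with heights both $\not\equiv 0\bmod m$. This case is the entire reason permissibility is defined with the $m$-floor of $\arm_m(i+1,p)$ rather than $\arm_m(i+1,p)$; the paper isolates it by splitting on whether $\arm_m(i+1,p)$ is a multiple of $m$, showing the floor inequality is equivalent to the conjunction of (b) and (c).

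Relatedly, the step you defer as ``most delicate''---that the block's non-full column never collides with a non-full column of $B$ across the seam---is not an optional refinement: it is the singleton half of your biconditional, and your phrase ``tracking the full-level requirement with the $m$-floor'' is where all of that work is being assumed rather than done. The check does succeed: since $l(B_{i,p})$ and $B$ are each singleton and column heights are weakly increasing, any collision between a block column $j\le i$ and an outside column $j'\ge i+1$ forces columns $i$ and $i+1$ themselves to terminate in that same level, and then either $j=i$ and $j'=i+1$ (so (c) fails outright) or the singleton property of the block, respectively of $B$, forces the intervening column to have height a multiple of $m$, which contradicts the weak monotonicity of heights. But some such argument, or the paper's explicit case split for (c), must be supplied; as written, your proof establishes the Ferrers part of the equivalence while the singleton part---the actual content distinguishing this lemma from its $m=1$ analogue---is asserted.
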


\begin{proof}

If column $i$, level $p$ in $B$ contains fewer than $m$ cells,
then $l_{i,p}(B)=B$ since $B$ is singleton, and there is nothing to prove.
Henceforth, assume that column $i$, level $p$ in $B$ contains $m$ cells.
We know that $B$, $B_{i,p}$, and $l(B_{i,p})$ are all singleton Ferrers
boards. It follows that $l_{i,p}(B)$ will be a singleton Ferrers board
if and only if these three conditions hold for the board $l_{i,p}(B)$.
\begin{enumerate}
\item[(a)] The lowest row of level $p$ is weakly shorter than the highest row of level $p-1$;
\item[(b)] column $i$ is weakly shorter than column $i+1$; and
\item[(c)] if columns $i$ and $i+1$ terminate at the same level, then the height of column $i+1$ is a multiple of $m$. 
\end{enumerate}
Condition (c) is needed to ensure $l_{i,p}(B)$ will be singleton.

To determine when these conditions hold, first note that applying
$l_{i,p}$ to $B$ exchanges $\arm_m(i,p)$ and $\leg_m(i,p)$.
Because $B$ is singleton, the top row of level $p-1$ in $B$
(and in $l_{i,p}(B)$) extends left of column $i$ by $\lfloor \leg_m(i,p-1)\rfloor_m/m$ cells. On the other hand,
the new bottom row of level $p$ in $l_{i,p}(B)$ extends left of column $i$
by $\lceil \arm_m(i,p)\rceil_m/m$ cells. Thus, condition (a) holds if and only if
$$ \lceil\arm_m(i,p)\rceil_m \leq \lfloor\leg_m(i,p-1)\rfloor_m. $$
Since both sides are multiples of $m$, this inequality is equivalent to
$\arm_m(i,p)\leq \lfloor\leg_m(i,p-1)\rfloor_m$, which is the
first condition in the definition of permissibility.

Now consider the heights of columns $i$ and $i+1$ in $l_{i,p}(B)$.
Both column $i$ and column $i+1$ have a full $m$ cells in level $p$.
So, in both $B$ and $l_{i,p}(B)$, column $i+1$ extends above level $p$ by $\arm_m(i+1,p)$ cells.
On the other hand, the new column $i$ in $l_{i,p}(B)$ extends above level $p$ by
$\leg_m(i,p)$ cells. So condition (b) will hold if and only if
$$ \leg_m(i,p)\leq\arm_m(i+1,p). $$
To deal with condition (c), consider two cases.
First suppose that $\arm_m(i+1,p)$ is a multiple of $m$. Then condition (c) must hold, and here condition (b) will hold if and only if
$\leg_m(i,p)\leq\lfloor\arm_m(i+1,p)\rfloor_m$.  Now suppose that $\arm_m(i+1,p)$ is not a multiple of $m$.
Given that condition (b) holds, the new board $l_{i,p}(B)$ will be singleton
if and only if the strengthened inequality
$\leg_m(i,p)\leq\lfloor\arm_m(i+1,p)\rfloor_m$ is true.
Thus, this last inequality is equivalent to the truth of (b) and (c) in all cases.
\end{proof}

\subsection{The Local $l$-operation on an $m$-level rook placement}

Since there is a bijection between rook placements on $B$ and $l(B)$ when $B$ is singleton, it stands to reason that it would generalize to a bijection between rook placements on $B$ and $l_{i,p}(B)$. The following lemma makes this precise.

\begin{lem}
\label{local l}
For a singleton board $B$, suppose  $l_{i,p}$ is permissible for $B$. Then there is an explicit bijection between $m$-level rook placements of $k$ rooks on $B$ and $m$-level rook placements of $k$ rooks on $l_{i,p}(B)$.
\end{lem}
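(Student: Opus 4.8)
The plan is to build the bijection between $m$-level rook placements on $B$ and on $l_{i,p}(B)$ by isolating the local subboard $B_{i,p}$ and applying the level-swapping bijection of Lemma~\ref{l operator} to it, while leaving rooks outside $B_{i,p}$ untouched. Since $l_{i,p}$ acts as the $l$-operator on $B_{i,p}$ and as the identity elsewhere, the natural strategy is to decompose a placement $P$ on $B$ into the rooks lying in $B_{i,p}$ and those lying in the complementary region $B \setminus B_{i,p}$, transport the former via Lemma~\ref{l operator} (which applies because $B_{i,p}$ and $l(B_{i,p})$ are singleton, as noted just before Definition~\ref{perm:def}), and keep the latter fixed. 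The resulting placement $P'$ lives on $l_{i,p}(B)$, which is a singleton Ferrers board precisely because $l_{i,p}$ is permissible, by Lemma~\ref{perm:lem}.

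\medskip

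\emph{First} I would verify that this piecewise construction is well-defined: because $B_{i,p}$ consists of all cells in or above level $p$ and in or to the left of column $i$, the complementary cells of $B$ occupy a disjoint set of columns and levels in the relevant range, so the bijection of Lemma~\ref{l operator} applied to the $B_{i,p}$ portion does not interfere with the fixed rooks at the level of cell-labels. \emph{Next}, and this is the crux, I must show that $P'$ is genuinely an $m$-level rook placement on $l_{i,p}(B)$, i.e.\ that no level or column of $l_{i,p}(B)$ receives two rooks after the swap. The two potential collisions are (i) a rook moved inside $B_{i,p}$ colliding in a \emph{column} with a fixed rook of $B \setminus B_{i,p}$ that lies below level $p$ in the same column, and (ii) a moved rook colliding in a \emph{level} with a fixed rook in the same level of $l_{i,p}(B)$ to the right of column $i$. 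This is exactly where the two permissibility inequalities of Definition~\ref{perm:def} earn their keep: the bound $\arm_m(i,p)\le\lfloor\leg_m(i,p-1)\rfloor_m$ controls how far left the swapped level-$p$ cells can reach, preventing a column conflict with the fixed rooks below, and $\leg_m(i,p)\le\lfloor\arm_m(i+1,p)\rfloor_m$ controls how high the swapped column $i$ can reach, preventing a level conflict with fixed rooks to the right.

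\medskip

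\emph{The main obstacle I expect} is the careful bookkeeping for collision (i)--(ii): one must track where each rook of the local placement lands under the Lemma~\ref{l operator} map and confirm that the permissibility bounds translate precisely into the geometric statement that moved rooks stay within the footprint of $l_{i,p}(B)$ and avoid the columns and levels occupied by the untouched rooks. The arguments already assembled in the proof of Lemma~\ref{perm:lem}—which show that applying $l_{i,p}$ exchanges $\arm_m(i,p)$ with $\leg_m(i,p)$ and that the permissibility conditions are equivalent to conditions (a)--(c) guaranteeing $l_{i,p}(B)$ is singleton—can be reused almost verbatim to rule out these collisions.

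\medskip

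\emph{Finally}, to establish bijectivity I would produce the inverse explicitly. Since $l$ restricted to singleton boards is an involution (as cited from Proposition~7.4 of~\cite{blrs:mrp}) and $l(B_{i,p})$ occupies the analogous local subboard of $l_{i,p}(B)$, applying the same local construction $l_{i,p}$ to $l_{i,p}(B)$ recovers $B$ and sends $P'$ back to $P$: the rooks outside the subboard were never moved, and the inverse of the Lemma~\ref{l operator} bijection returns the local rooks to their original cells. The only point requiring a remark is that the local subboard of $l_{i,p}(B)$ at $(i,p)$ is indeed $l(B_{i,p})$, which follows because the operation only alters cells within $B_{i,p}$. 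This symmetry makes the map its own ``inverse shape'' and completes the proof that it is a bijection preserving $k$.
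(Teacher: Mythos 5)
Your overall plan (apply the Lemma~\ref{l operator} bijection to the subboard $B_{i,p}$ and deal with the rooks outside it) starts the same way as the paper, but your resolution of the collision problem --- which you correctly identify as the crux --- is wrong. You claim the two permissibility inequalities of Definition~\ref{perm:def} prevent a rook moved inside $B_{i,p}$ from colliding with a fixed rook outside it. They do not: permissibility is a statement purely about the \emph{shape} of the board (by Lemma~\ref{perm:lem} it is equivalent to $l_{i,p}(B)$ being a singleton Ferrers board) and says nothing about where rooks sit. Concrete counterexample: take $m=1$, $B=(2,2)$, $i=2$, $p=2$, so $B_{2,2}$ is the top row of two cells, $l_{2,2}$ is permissible, and $l_{2,2}(B)=(1,3)$. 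Place rooks at $(1,2)$ (inside $B_{2,2}$) and at $(2,1)$ (outside). The Lemma~\ref{l operator} bijection sends the rook at $(1,2)$, which occupies the cell numbered $2$ in its level of the subboard, to the cell numbered $2$ of the corresponding column of $l(B_{2,2})$, i.e.\ to $(2,3)$ in $l_{2,2}(B)$; your construction leaves the other rook at $(2,1)$. Both rooks now occupy column $2$, so the image is not an $m$-level rook placement. Your preliminary claim that the complement of $B_{i,p}$ ``occupies a disjoint set of columns and levels'' is exactly what fails: columns $1,\dots,i$ below level $p$, and levels $\geq p$ to the right of column $i$, are shared between the two regions, and the subboard bijection changes the columns and levels of the rooks it moves.

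The paper repairs this by moving the outside rooks as well, which is the real content of its proof. For column collisions: pair, from left to right, the columns of $B$ containing no rook of $B_{i,p}$ with the columns of $l_{i,p}(B)$ containing no rook of $l(B_{i,p})$ (these sets have equal cardinality), and relocate each rook lying below level $p$ to the same row of the paired column --- in the example above, the rook at $(2,1)$ moves to $(1,1)$, giving the valid placement $\{(1,1),(2,3)\}$. Level collisions among rooks to the right of column $i$ are handled by the analogous canonical bijection on rook-free levels, preserving the column and the height within the level modulo $m$; here the singleton hypothesis is what guarantees those columns have a full $m$ cells in the relevant levels. Consequently your inverse argument also needs to be redone: it is no longer true that ``the rooks outside the subboard were never moved,'' and one must check (as the paper does) that the column and level bijections induced by the second application of $l_{i,p}$ are the inverses of those induced by the first.
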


\begin{proof}
Use the bijection induced by the $l$ operation in Lemma~\ref{l operator} on the subboard transposed by $l_{i,p}$, not moving the rooks on the part of board $B$ which is fixed. However, this may cause a rook in the transposed subboard to occupy the same column or level of $l_{i,p}(B)$ as one of the rooks which was fixed. We deal with this possibility next.

In order for two rooks to end up in the same column, there must be rooks placed on $B$ beneath $B_{i,p}$, so we can assume $p > 1$ without losing generality. Consider the set of columns of $B$ which do not contain rooks in $B_{i,p}$, and the set of columns of $l_{i,p}(B)$ which do not contain rooks in $l(B_{i,p})$. By our assumption on $p$, these two sets have the same cardinality and so
 we can put a canonical bijection on them by pairing the leftmost columns in each set and moving to the right. If there is a rook lower than level $p$ in one of these columns of $B$, use this bijection on the columns to move it to the cell in the same row of the corresponding column of $l_{i,p}(B)$. After doing so, there must be at most one rook in each column of $l_{i,p}(B)$. For example, in Figure~\ref{permissible} the rook in $(3,2)$ is in the second column from the left of $B$ which does not contain a rook in $B_{4,2}$. Thus it moves to column 2, which is the second column from the left of $l_{4,2}(B)$ that does not contain a rook in $l(B_{4,2})$.

If two rooks end up in the same level we treat them similarly where we can assume, without loss of generality, that the $i$-th column is not the rightmost column of $B$. There is a canonical bijection between the levels of $B$ which do not contain rooks in $B_{i,p}$ and those of $l_{i,p}(B)$ that do not contain rooks in $l(B_{i,p})$. Adjust the levels of all rooks to the right of column $i$ using this bijection, fixing the column of the rook that moves. Furthermore, fix the height of the rook that moves within the level, that is, if the rook was in cell $(x,y)$, move the rook to cell $(x,y')$ in the appropriate level with $y \equiv y' \pmod{m}$. Note that since $B$ and $l_{i,p}(B)$ are singleton boards, columns to the right of column $i$ will contain a full $m$ cells at any level which contained a rook in the subboard $B_{i,p}$ or $l(B_{i,p})$.

To see that this is a bijection, we construct its inverse. Recall that the $l$ operator is an involution on singleton boards. Thus, since $B_{i,p}$ is a singleton subboard,   $l_{i,p}(l_{i,p}(B)) = B$. Similarly, applying the bijection from Lemma~\ref{l operator} and then its inverse returns the original placement of rooks on $B_{i,p}$. All that remains to check is that any rooks moved outside of $B_{i,p}$ return to their original cells. Since the rooks return to their original placement on $B_{i,p}$, the set of columns that gain a rook in $l(B_{i,p})$ after the first application of $l$ will be the same set as those that lose a rook in $B_{i,p}$ after the second application of $l$. Thus the bijection on the columns induced by the first application of $l$ will be the inverse of the bijection induced by the second application, and any rook required to move in $l_{i,p}(B)$ will move back in $l_{i,p}(l_{i,p}(B))$. A similar argument holds for levels, noting that $l_{i,p}(B)$ being singleton ensures that any level which gains a rook in $l(B_{i,p})$ after applying $l$ contains a full $m$ cells in every column to the right of column $i$. Thus this yields a bijection between rook placements on $B$ and $l_{i,p}(B)$. Figure~\ref{permissible} illustrates this bijection. 
\end{proof}

\subsection{Bijections with $m$-increasing boards}

Foata and Sch\"{u}tzenberger proved there is a unique Ferrers board in every rook equivalence class whose column lengths are strictly increasing and used this board as a target for their bijections.  To accomplish the same thing, we need the following definition and theorem.

\bfi
\begin{tikzpicture}[scale=.7]
\draw (-1,2.5) node{$B=$};
\fill[lightgray] (1,2) rectangle (4,4);
\fill[lightgray] (3,4) rectangle (4,5);
\draw[very thick] (0,0)--(4,0);
\draw (0,1)--(4,1);
\foreach \y in {2,4}
    \draw[very thick] (1,\y)--(4,\y);
\draw (1,3)--(4,3);
\draw (3,5)--(4,5);
\draw (0,0)--(0,1);
\foreach \x in {1,2}
    \draw (\x,0)--(\x,4);
\foreach \x in {3,4}
    \draw (\x,0)--(\x,5);
\draw (3.5,4.5) node{$R$};
\draw (2.5,1.5) node{$R$};
\draw (1.5,2.5) node{$R$};
\draw (3.75,4.75) node{$^1$};
\draw (3.75,2.75) node{$^1$};
\draw (3.75,3.75) node{$^2$};
\draw (2.75,2.75) node{$^3$};
\draw (2.75,3.75) node{$^4$};
\draw (1.75,2.75) node{$^5$};
\draw (1.75,3.75) node{$^6$};
\end{tikzpicture}
\hs{50pt}
\begin{tikzpicture}[scale=.7]
\draw (-1.5,2.5) node{$l_{4,2}(B)=$};
\draw[very thick] (0,0)--(4,0);
\draw (0,1)--(4,1);
\draw[very thick] (1,2)--(4,2);
\foreach \y in {5,7}
    \draw (3,\y)--(4,\y);
\draw (2,3)--(4,3);
\foreach \y in {4,6,8}
    \draw[very thick] (3,\y)--(4,\y);
\draw (0,0)--(0,1);
\draw (1,0)--(1,2);
\draw (2,0)--(2,3);
\foreach \x in {3,4}
    \draw (\x,0)--(\x,8);
\draw (2.75,2.75) node{$^1$};
\draw (3.75,2.75) node{$^1$};
\draw (3.75,3.75) node{$^2$};
\draw (3.75,4.75) node{$^3$};
\draw (3.75,5.75) node{$^4$};
\draw (3.75,6.75) node{$^5$};
\draw (3.75,7.75) node{$^6$};
\draw (2.5,2.5) node{$R$};
\draw (3.5,6.5) node{$R$};
\draw (1.5,1.5) node{$R$};
\end{tikzpicture}
\capt{\label{permissible} On the left, $B_{4,2}$ is shaded. Here $l_{4,2}$ is permissible for $B$ and $l_{4,2}(B)$ is shown on the right.}
\efi

\begin{defn}
A Ferrers board $B=(b_1,b_2,\dots,b_n)$ is called \emph{$m$-increasing} if $b_{i+1}\ge b_i + m$ for all $1 \leq i \leq n-1$.
\end{defn}

Notice that when $m=1$ increasing and $m$-increasing are equivalent.

\begin{thm}[Theorem 4.5~\cite{blrs:mrp}]
\label{unique mrp}
Every Ferrers board is $m$-level rook equivalent to a unique $m$-increasing board.
\end{thm}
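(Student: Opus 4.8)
The plan is to deduce both halves of the statement from a single complete numerical invariant of the $m$-level rook equivalence class, in the spirit of the Goldman--Joichi--White factorization. Concretely, for a Ferrers board $B=(b_1,\dots,b_n)$ I would prove the polynomial identity, in the indeterminate $N$,
$$\prod_{i=1}^n\bigl(b_i+m(N-i+1)\bigr)=\sum_{k=0}^n r_{k,m}(B)\,m^{\,n-k}\,N(N-1)\cdots(N-n+k+1),$$
where the final factor is the falling factorial with $n-k$ factors. To prove it, I would first use Lemmas~\ref{singletonexist} and~\ref{singleton} to reduce to the case that $B$ is a singleton board, since neither side changes under $m$-level rook equivalence or under deleting empty columns. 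Then I would adjoin below $B$ a block of $N$ full levels, each consisting of $n$ columns of height $m$, and count the $m$-level rook placements of the enlarged board that use exactly one rook in every column. Conditioning on which rooks land in the original board $B$ (the remaining rooks then fill distinct bottom levels freely, in $m^{\,n-k}$ ways per cell-choice and $N(N-1)\cdots$ ways for the levels) produces the right-hand side; counting the same placements column by column from left to right produces the left-hand side.

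The delicate step, which I expect to be the main obstacle, is the left-to-right count: I must show that when the rook in column $i$ is placed, each of the $i-1$ levels already occupied by rooks in columns $1,\dots,i-1$ removes \emph{exactly} $m$ available cells from column $i$, so that column $i$ contributes the factor $b_i+m(N-i+1)=Nm+b_i-m(i-1)$. This is precisely where singletonness is essential. If some earlier column $j<i$ had a rook in a level $p$ in which column $i$ is only partially filled, then $j$ and $i$ would both terminate in level $p$ with column $i$ partial; the singleton condition would then force column $j$ to be full in level $p$, whence $b_j\ge pm>b_i$, contradicting $b_j\le b_i$. Thus every occupied level meets column $i$ in a full block of $m$ cells, and the count factors as claimed.

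Granting the identity, uniqueness is essentially immediate. Reading it as a polynomial in $N$, the right-hand side is determined by the numbers $r_{k,m}(B)$, so two $m$-level rook equivalent boards with the same number of columns have the same left-hand side, hence (the leading coefficient being $m^n$) the same multiset of roots $\{\,b_i-m(i-1):1\le i\le n\,\}$. For an $m$-increasing board the quantities $b_i-m(i-1)$ are weakly increasing in $i$, because $b_{i+1}-b_i\ge m$; listing the root multiset in increasing order therefore recovers $b_i-m(i-1)$ as its $i$-th smallest element, and hence recovers each $b_i$. The only bookkeeping is to match the number of columns: padding both boards with leading empty columns to a common length leaves all rook numbers unchanged, and comparing the most negative roots (which come only from the padded zero columns) forces the two $m$-increasing boards to have had the same length to begin with. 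Equal root multisets then give $B=B'$.

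For existence I would reverse this reconstruction. Starting from the sorted root multiset $c_1\le\cdots\le c_n$ of the class of $B$, set $b_i'=c_i+m(i-1)$; then $b_{i+1}'-b_i'=(c_{i+1}-c_i)+m\ge m$, so $B'=(b_1',\dots,b_n')$ is $m$-increasing, and by the identity it shares every $m$-level rook number with $B$. The point requiring care is again the choice of $n$: deleting or adjoining empty columns shifts all indices, so one must take the number of columns as small as possible subject to $c_1\ge 0$ in order to obtain a genuine (nonnegative, weakly increasing, integral) board. Alternatively, and more in keeping with the machinery developed above, existence can be obtained constructively by repeatedly applying permissible local $l$-operations (Lemma~\ref{local l}, Definition~\ref{perm:def}) to the singleton board $B_S$: each move preserves all $m$-level rook numbers, and one checks by a termination argument on a suitable statistic measuring failure of the $m$-increasing condition that, whenever the board is not yet $m$-increasing, a permissible move making progress exists, so the process halts at an $m$-increasing board.
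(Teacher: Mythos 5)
Your central identity is asserted for \emph{every} Ferrers board, but it is false for non-singleton boards, and the proposed ``reduction to the singleton case'' cannot repair this. Take $m=2$ and $B=(1,1)$: then $r_{0,2}(B)=1$, $r_{1,2}(B)=2$, $r_{2,2}(B)=0$, so your right-hand side is $4N(N-1)+4N=4N^2$, while your left-hand side is $(1+2N)\bigl(1+2(N-1)\bigr)=4N^2-1$. The reduction fails because, contrary to what you assert, the left-hand side is \emph{not} invariant under $m$-level rook equivalence: it is a product over the column heights $b_i$, and these change when $B$ is replaced by $B_S$ (for $(1,1)$ the root multiset $\{b_i-m(i-1)\}$ is $\{1,-1\}$, while for $B_S=(0,2)$ it is $\{0,0\}$). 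Indeed, the claim that $\prod_i\bigl(b_i+m(N-i+1)\bigr)$ depends only on the equivalence class is essentially the content of the theorem you are proving, so invoking it is circular. Your double-counting argument itself is fine, but exactly in the generality where you use singletonness (each occupied level deleting a full $m$ cells); what it proves is the factorization for singleton boards, i.e.\ Theorem~\ref{briggs remmel}, which the paper quotes from Briggs--Remmel rather than reproving.

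The error propagates into both halves of your argument, though both are repairable. In the existence step, ``the sorted root multiset of the class of $B$'' must be read off a \emph{singleton} representative, not off $B$: applied directly to $B=(1,1)$ your recipe gives $c_1=-1$, $c_2=1$, hence $b_1'=-1$, $b_2'=3$, which is not a board, whereas reading the roots off $B_S=(0,2)$ gives the correct answer $(0,2)$. So you must first invoke Lemmas~\ref{singletonexist} and~\ref{singleton} to pass to $B_S$ and only then extract roots. In the uniqueness step you apply the identity to two $m$-increasing boards, which is legitimate only after observing that $m$-increasing boards are singleton (easy: two columns terminating in the same level have heights differing by less than $m$); this observation is missing. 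Also, your length-matching device via ``most negative roots'' does not quite work when the $m$-increasing board itself begins with genuine zero columns, and in fact uniqueness can only hold modulo empty columns: $(m,2m)$ and $(0,m,2m)$ are both $m$-increasing and $m$-level rook equivalent. With these repairs your argument is correct and coincides with the root-vector machinery the paper itself relies on (Theorem~\ref{briggs remmel} and the discussion following it); note the paper does not prove this theorem at all but cites it from~\cite{blrs:mrp}. Your fallback existence argument --- iterating permissible local $l$-operations on $B_S$ --- is sound as it stands: Lemma~\ref{increase} supplies a strictly increasing permissible move whenever the board is not $m$-increasing, and finiteness of the equivalence class forces termination; but it yields only existence, so the root-vector argument (correctly restricted to singleton boards) is still needed for uniqueness.
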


We are now almost ready to prove the main result of this section, Theorem~\ref{bijections} below. However, to do so we must put an order on Ferrers boards. Once we have established this order, we will be able to give an explicit bijection between $m$-level rook placements on an arbitrary Ferrers board $B$ and on an $m$-level rook equivalent Ferrers board which is greater than $B$ in this order, if such a board exists. Additionally, the set of all Ferrers boards equivalent to $B$ will have a unique maximum element under this order, namely the $m$-increasing board guaranteed by the previous theorem.

To  define this order, if $B=(b_1,\dots,b_n)$ then consider the reversal of $B$, $B^r=(b_n,\dots,b_1)$.  Now let $B<B'$ if   $B^r$ is lexicographically smaller than $(B')^r$. It is important to note that when applying Lemma~\ref{singleton} we will always have
\begin{equation}
\label{B_S:eq}
B_S\ge B
\end{equation}
since in $B_S$ all the cells in each level are as far to the right as possible.

\begin{lem}
\label{increase}
Given a singleton board $B$ containing a column $i$ and a level $p$ with the property that
\begin{equation}
\label{inc:eq}
\arm_m(i,p) < \leg_m(i,p),
\end{equation}
there is a singleton board $B' = l_{i',p}(B)$ with $i'\geq i$ and $B'>B$.

Furthermore, if $B$ is not $m$-increasing then a column $i$ and level $p$ satisfying equation~(\ref{inc:eq}) must exist.
\end{lem}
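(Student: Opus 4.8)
The plan is to treat the two assertions separately, establishing first the existence of $B'$ and then the necessity of a cell satisfying~(\ref{inc:eq}).

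For the existence statement, I would first isolate what makes the board grow: if $l_{i',p}$ is permissible and column $i'$ is full in level $p$, then (as computed in the proof of Lemma~\ref{perm:lem}) the operation swaps arm and leg, so the height of column $i'$ changes from $pm + \arm_m(i',p)$ to $pm + \leg_m(i',p)$, while every column to the right of $i'$ is left untouched. Hence, whenever $\arm_m(i',p) < \leg_m(i',p)$, column $i'$ strictly grows and all later columns are fixed, so column $i'$ is the rightmost point of difference and comparing reversals lexicographically yields $l_{i',p}(B) > B$. The task thus reduces to producing an index $i' \geq i$ such that column $i'$ is full in level $p$, satisfies $\arm_m(i',p) < \leg_m(i',p)$, and has $l_{i',p}$ permissible.

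The choice I would make is to let $i'$ be the \emph{largest} index for which column $i'$ is full in level $p$ and $\arm_m(i',p) < \leg_m(i',p)$. This set contains $i$: from $\arm_m(i,p) < \leg_m(i,p)$ we get $\leg_m(i,p) \geq 1$, so some column left of $i$ meets level $p$, and since in a singleton board the only possibly-partial column of a level is its leftmost occupied column, column $i$ itself must be full in level $p$. It then remains to check the two inequalities of Definition~\ref{perm:def} at $i'$. The first, $\arm_m(i',p) \leq \lfloor \leg_m(i',p-1)\rfloor_m$, I expect to be automatic: each column left of $i'$ meeting level $p$ is full in level $p-1$, so if $s$ counts these columns then $\leg_m(i',p-1) \geq sm$ while $\leg_m(i',p) \leq sm$, giving $\arm_m(i',p) < \leg_m(i',p) \leq sm \leq \lfloor \leg_m(i',p-1)\rfloor_m$ (the case $p=1$ handled by the convention $\leg_m(i',0)=\infty$). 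The second inequality, $\leg_m(i',p) \leq \lfloor \arm_m(i'+1,p)\rfloor_m$, is where maximality of $i'$ does the work, and I expect it to be the crux. If $i'$ is the last column, then $\arm_m(i'+1,p)=\infty$ and there is nothing to check --- precisely the situation the $\infty$ convention was introduced for. Otherwise column $i'+1$ has height at least $b_{i'}\geq pm$, hence is also full in level $p$; by maximality it fails~(\ref{inc:eq}), so $\arm_m(i'+1,p) \geq \leg_m(i'+1,p) = \leg_m(i',p)+m$, and since $\lfloor a\rfloor_m \geq a-(m-1)$ this gives $\lfloor \arm_m(i'+1,p)\rfloor_m \geq \leg_m(i',p)+1 > \leg_m(i',p)$. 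With both inequalities verified, $l_{i',p}$ is permissible, so $B' = l_{i',p}(B)$ is a singleton board by Lemma~\ref{perm:lem}, and $B' > B$ by the reduction above.

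For the furthermore clause, suppose $B$ is not $m$-increasing and fix $i$ with $b_{i+1} < b_i + m$; I would take $p = \lceil b_i\rceil_m/m$, the top level of column $i$, and claim $(i+1,p)$ satisfies~(\ref{inc:eq}). Indeed $\arm_m(i+1,p) = \max(0,\, b_{i+1}-pm)$, while column $i$ alone contributes $b_i-(p-1)m \geq 1$ cells to $\leg_m(i+1,p)$; rearranging the hypothesis to $b_{i+1}-pm < b_i-(p-1)m$ and using $b_i-(p-1)m>0$ gives $\arm_m(i+1,p) < b_i-(p-1)m \leq \leg_m(i+1,p)$. The one case needing separate care is $b_i=0$ (a leading empty column), where $p$ is undefined; I would dispose of it by observing that empty columns do not change any $m$-level rook number, so one may assume the violating column has $b_i \geq 1$. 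Overall I expect the permissibility check in the existence half --- especially the second inequality, proved via maximality of $i'$ together with the $\infty$ convention --- to be the main obstacle, with the remaining steps being bookkeeping about the singleton structure.
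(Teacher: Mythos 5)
Your proof is correct and follows essentially the same route as the paper's: both take $i'$ maximal with $\arm_m(i',p)<\leg_m(i',p)$, both observe that the first permissibility inequality is automatic from the chain $\arm_m(i',p)<\leg_m(i',p)\leq\lfloor\leg_m(i',p-1)\rfloor_m$, and both use maximality of $i'$ (you directly, the paper via contradiction) together with $\leg_m(i'+1,p)=\leg_m(i',p)+m$ to get the second inequality, concluding $B'>B$ from the growth of the rightmost affected column. Your ``furthermore'' argument merely replaces the paper's two-case analysis with a single unified computation at the top level of the violating column, which is a cosmetic difference.
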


\begin{proof}
To prove the first statement, let $i'\ge i$ be the maximum index such that $\arm_m(i',p) < \leg_m(i',p)$.  Note that by our convention on $\arm_m$, we must have that $i'$ is at most the number of columns of $B$. We claim that it  suffices to show that $l_{i',p}$ is permissible for $B$.  This is because if  $l_{i',p}$ is permissible for $B$, then the resulting board $B'$ must satisfy $B'>B$.  Indeed, $l_{i',p}(B)$ increases the length of column $i'$ by $\leg_m(i',p)-\arm_m(i',p)$, which must be greater than $0$, and column $i'$ is the rightmost column of $B$ affected by  $l_{i',p}$. Thus $B'>B$.

If $l_{i',p}$ is not permissible for $B$, then we claim that we have $\arm_m(i'+1,p) < \leg_m(i'+1,p)$ which will contradict the maximality of $i'$ and complete this part of the proof.  Note that 
$$
\arm_m(i',p) < \leg_m(i',p) \leq \lfloor \leg_m(i',p-1) \rfloor_m.
$$
 So $l_{i',p}$ not being permissible for $B$ implies that $\lfloor \arm_m(i'+1,p) \rfloor_m<\leg_m(i',p) =\leg_m(i'+1,p) -m$  since $B$ is singleton and, because  $\leg_m(i',p)$ is positive,  $i'$ cannot be the leftmost column terminating in level $p$.  This implies the desired contradiction that $\arm_m(i'+1,p) < \leg_m(i'+1,p)$. 

To prove the second statement of the theorem, note that if $B$ is not $m$-increasing there are two possible cases: either there are two adjacent columns $i-1, i$ of $B$ 
which terminate at the same level, or column $i-1$ terminates in level $p$ and $B$ has exactly $r_1$ cells in the $p$th level of column $i-1$ and exactly $r_2$ cells in the $(p+1)$st  level of column $i$ where $r_1 > r_2 > 0$.

Case 1: Let columns $i-1$ and $i$ both terminate at level $p$. Then $\arm_m(i,p) = 0$, by the assumption that column $i$ terminates at level $p$, but $\leg_m(i,p) \geq 1$ since column $i-1$ also terminates at the $p$th level. Thus $\arm_m(i,p) < \leg_m(i,p)$ as desired.

Case 2: By assumption $\arm_m(i,p) = r_2 < r_1 \leq \leg_m(i,p)$ which completes the proof.
\end{proof}

We are now in a position to prove our main theorem of this section.

\begin{thm}
\label{bijections}
Given any two $m$-level rook equivalent Ferrers boards, there is an explicit bijection between $m$-level rook placements of $k$ rooks on them.
\end{thm}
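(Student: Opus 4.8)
The plan is to show that every Ferrers board admits an explicit bijection to a canonical representative of its $m$-level rook equivalence class, and then to connect any two equivalent boards by composing one such bijection with the inverse of another. By Theorem~\ref{unique mrp}, each class contains a \emph{unique} $m$-increasing board $I$, and this $I$ will serve as the common target. Concretely, given $m$-level rook equivalent boards $B_1$ and $B_2$, I would produce explicit bijections $\Phi_1\colon B_1\to I$ and $\Phi_2\colon B_2\to I$ on $m$-level rook placements of $k$ rooks, and then output $\Phi_2^{-1}\circ\Phi_1$; since $I$ is the same for both boards (uniqueness in Theorem~\ref{unique mrp}), this is a well-defined explicit bijection between placements on $B_1$ and on $B_2$.

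To build the bijection from an arbitrary board $B$ to its representative $I$, I would first apply Lemma~\ref{singleton} to pass to the singleton board $B_S$, which is $m$-level rook equivalent to $B$ and carries an explicit bijection with it. It therefore suffices to reach $I$ from a singleton board. Here I would iterate the straightening step supplied by Lemma~\ref{increase}: as long as the current singleton board $B$ is not $m$-increasing, the second half of that lemma produces a column $i$ and level $p$ with $\arm_m(i,p)<\leg_m(i,p)$, and hence a permissible local operation yielding $B'=l_{i',p}(B)$ with $B'>B$ in the reverse-lexicographic order. Lemma~\ref{local l} then furnishes an explicit bijection between $m$-level rook placements on $B$ and on $B'$, and in particular $B'$ lies in the same equivalence class as $B$. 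Composing these steps gives the required bijection up the chain $B_S<B'<B''<\cdots$.

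The crux of the argument, and the step I expect to be the main obstacle, is verifying that this iteration terminates exactly at the unique $m$-increasing board. Termination follows from finiteness of the equivalence class: since $r_{1,m}(B)$ equals the number of cells of $B$, any two $m$-level rook equivalent boards have the same number of cells, and there are only finitely many Ferrers boards with a fixed number of cells. As the order $<$ is a total order on this finite set and each straightening step strictly increases $B$, no infinite ascending chain can occur, so the process halts. By the contrapositive of the second statement of Lemma~\ref{increase}, the terminal board must be $m$-increasing, and Theorem~\ref{unique mrp} identifies it as the unique such board $I$ in the class, independent of the choices of $i'$ made along the way. Composing all the intermediate explicit bijections (including the initial Lemma~\ref{singleton} step) yields the desired explicit bijection $B\to I$, completing the construction; care is needed only to confirm that each $B'$ genuinely remains in the class, which is exactly what the bijection of Lemma~\ref{local l} guarantees.
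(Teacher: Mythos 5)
Your proposal is correct and follows essentially the same route as the paper's proof: reduce to the singleton board via Lemma~\ref{singleton}, repeatedly apply the permissible local $l$-operations supplied by Lemma~\ref{increase} together with the placement bijections of Lemma~\ref{local l}, use the strict increase in the reverse-lexicographic order plus finiteness of the equivalence class to terminate at the unique $m$-increasing board of Theorem~\ref{unique mrp}, and compose (with an inverse) to connect any two equivalent boards. The only difference is cosmetic: you justify finiteness of the class via $r_{1,m}(B)$ counting cells, a detail the paper asserts without comment.
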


\begin{figure} \begin{center}
\begin{tikzpicture}[scale=.5]
\foreach \y in {0,1}
	\draw (0,\y)--(5,\y);
\foreach \y in {2,3,5}
	\draw (3,\y)--(5,\y);
\foreach \y in {4,5}
	\draw (4,\y)--(5,\y);
\draw[very thick] (3,6)--(5,6);
\foreach \x in {0,1,2}
	\draw (\x,0)--(\x,1);
\draw (3,0)--(3,6);
\foreach \x in {4,5}
	\draw (\x,0)--(\x,7);
\draw (4,7)--(5,7);
\draw (.5,.5) node{$R$};
\draw (4.5,2.5) node{$R$};
\draw (3.5,5.5) node{$R$};
\draw (-1,2.5) node{(a)};
\draw[very thick] (0,0)--(5,0);
\draw[very thick] (3,2)--(5,2);
\draw[very thick] (3,4)--(5,4);
\end{tikzpicture}
\hs{40pt}
\begin{tikzpicture}[scale=.5]
\foreach \y in {0,1}
	\draw (0,\y)--(4,\y);
\draw[very thick] (1,2)--(4,2);
\draw (2,3)--(4,3);
\foreach \y in {4,5,}
	\draw (2,\y)--(4,\y);
\draw[very thick] (2,6)--(4,6);
\draw[very thick] (0,0)--(4,0);
\draw[very thick] (2,4)--(4,4);
\foreach \x in {0,1}
	\draw (\x,0)--(\x,\x+1);
\draw (2,0)--(2,6);
\foreach \x in {3,4}
	\draw (\x,0)--(\x,7);
\draw (3,7)--(4,7);
\draw (.5,.5) node{$R$};
\draw (3.5,2.5) node{$R$};
\draw (2.5,5.5) node{$R$};
\draw (-1,2.5) node{(b)};
\end{tikzpicture}
\hspace{40pt}
\begin{tikzpicture}[scale=.5]
\foreach \y in {0,1}
	\draw (0,\y)--(4,\y);
\draw[very thick] (1,2)--(4,2);
\draw (2,3)--(4,3);
\foreach \y in {4,5,}
	\draw (2,\y)--(4,\y);
\draw[very thick] (3,6)--(4,6);
\draw[very thick] (0,0)--(4,0);
\draw[very thick] (2,4)--(4,4);
\foreach \x in {0,1}
	\draw (\x,0)--(\x,\x+1);
\draw (2,0)--(2,5);
\foreach \x in {3,4}
	\draw (\x,0)--(\x,8);
\draw (3,7)--(4,7);
\draw[very thick] (3,8)--(4,8);
\draw (.5,.5) node{$R$};
\draw (2.5,2.5) node{$R$};
\draw (3.5,7.5) node{$R$};
\draw (-1,2.5) node{(c)};
\end{tikzpicture}
\hspace{40pt}
\begin{tikzpicture}[scale=.5]
\foreach \y in {0,1,2,3}
	\draw (0,\y)--(3,\y);
\foreach \y in {0,2}
	\draw[very thick] (0,\y)--(3,\y);
\foreach \y in {6,8}
	\draw[very thick] (2,\y)--(3,\y);
\foreach \y in {7}
	\draw(2,\y)--(3,\y);
\draw[very thick] (1,4)--(3,4);
\draw[very thick] (1,2)--(3,2);
\draw (1,3)--(3,3);
\draw (1,5)--(3,5);
\draw (0,0)--(0,3);
\draw (1,0)--(1,5);
\foreach \x in {2,3}
	\draw (\x,0)--(\x,8);
\draw (.5,2.5) node{$R$};
\draw (1.5,.5) node{$R$};
\draw (2.5,7.5) node{$R$};
\draw (-1,2.5) node{(d)};
\end{tikzpicture}

\capt{\label{full bijection} (a) A $2$-level rook placement on a Ferrers board.	(b) The  placement on the singleton board obtained after applying  Lemma~\ref{singleton}.	(c) The  placement obtained after applying Lemma~\ref{local l} using $l_{4,3}$.
(d) The placement obtained on a $2$-increasing board after applying Lemma~\ref{local l} again using $l_{2,1}$.}
\efi

\begin{proof}
Given any Ferrers board $B$, let $B_m$ be the unique $m$-increasing board  in the $m$-level rook equivalence class of $B$ guaranteed by Theorem~\ref{unique mrp}.
It suffices to show that there is an explicit bijection between the $m$-level rook placements of $k$ rooks on $B$ and those on $B_m$.  This is trivial if $B=B_m$ so assume $B\neq B_m$.  By  Lemma~\ref{singleton}, we have an explicit bijection between the placements on $B$ and those on $B_S$ where $B_S\ge B$ by equation~(\ref{B_S:eq}).  If $B_S=B_m$ then we are done.  Otherwise, apply the local $l$ operator defined in Lemma~\ref{increase} which will give $B'=l_{i,p}(B_S)$ with $B'>B_S$ and, by Lemma~\ref{local l}, another explicit bijection between rook placements.  We now repeat this process if necessary.  Since there are only finitely many boards in an $m$-level rook equivalence class and the lexicographic order increases at each stage, we must eventually terminate.  And, by Lemma~\ref{increase} again, termination must occur at $B_m$.  Composing all the bijections finishes the proof. 
\end{proof}

See Figure~\ref{full bijection} for a short example of this process.

\section{$q$-Analogues}
\label{sec:q-analogues}

Briggs and Remmel~\cite{br:mrn} defined $p,q$-analogues of
the $m$-level rook numbers, denoted $r_{k,m}(B;p,q)$, by assigning
a monomial in $p$ and $q$ to each $m$-level rook placement of $k$
rooks on $B$.  Briggs and Remmel proved a factorization formula
involving $r_{k,m}(B;p,q)$ for singleton boards, which was generalized
to all Ferrers boards by the present authors~\cite[Thm. 3.3]{blrs:mrp}. 
In this section, we show that the bijections given earlier in this paper
preserve the $q$-power assigned to a rook
placement. This leads to bijective proofs that two $m$-level rook equivalent boards have the
same rook polynomials $r_{k,m}(B;1,q)$ for all $k$.  Our bijections do
not preserve the $p$-power, however, and we leave it as an open problem
to give a bijective treatment of the full $p,q$-analogue of $m$-level
rook numbers.

\subsection{Definition of the $q$-weight}
\label{subsec:def-qwt}

To begin, we recall that the $q$-weight assigned to
an $m$-level rook placement $\pi$ on a board $B$ is the $m$-inversion number of $\pi$. 
The \emph{$m$-inversion number}, denoted $\inv_m(\pi)$, counts cells $c$ in $B$ satisfying the following
conditions:
\begin{enumerate}

\item The cell $c$ does not contain a rook.
\item There is no rook above $c$
in the same column.
\item There is no rook to the left of $c$ in the same level.

\end {enumerate}

For example, the $3$-level rook placement shown in Figure~\ref{q weight example} has an $m$-inversion number of $19$;  
the cells contributing to the $m$-inversion number are marked by stars.

To motivate why this statistic is called the $m$-inversion number, consider the case where $m=1$. Thus the $1$-inversion number counts the number of cells which do not contain a rook and are neither below nor to the right of a rook. If $B$ is an $n$ by $n$ board and $\sigma$ is an element of $S_n$, the symmetric group on the elements $\{1,2,\dots,n\}$, then we can associate with $\sigma$ a placement of $n$ rooks on $B$, $\pi$, by the convention that there is a rook in column $i$ and row $n+1-p$ if and only if $\sigma_i=p$. In this case $\inv_1(\pi) = \inv(\sigma)$ where $\inv(\sigma)$ is the standard inversion number of a permutation, counting the number of pairs of indices $(a,b)$ with the property that $a<b$ but $\sigma(a)>\sigma(b)$.

\bfi
\begin{tikzpicture}[scale=.5]
\foreach \y in {1,2]}
    \draw (0,\y)--(8,\y);
\draw[very thick] (0,0)--(8,0);
\draw[very thick] (1,3)--(8,3);
\draw (3,4)--(8,4);
\draw (4,5)--(8,5);
\draw[very thick] (4,6)--(8,6);
\draw (4,7)--(8,7);
\draw (5,8)--(8,8);
\draw[very thick] (6,9)--(8,9);
\draw (6,10)--(8,10);
\draw (0,0)--(0,2);
\foreach \x in {1,2} 
   \draw (\x,0)--(\x,3);
\draw (3,0)--(3,4);
\draw (4,0)--(4,7);
\draw (5,0)--(5,8);
\foreach \x in {6,7,8}
	\draw (\x,0)--(\x,10);
\draw (2.5,1.5) node{$R$};
\draw (5.5, 6.5) node{$*$};
\draw (7.5,5.5) node{$R$};
\draw (.5,.5) node{$*$};
\draw (1.5,.5) node{$*$};
\draw (.5,1.5) node{$*$};
\draw (1.5,1.5) node{$*$};
\draw (5.5,3.5) node{$*$};
\draw (5.5,4.5) node{$*$};
\draw (5.5,5.5) node{$*$};
\draw (1.5,2.5) node{$*$};
\draw (2.5,2.5) node{$*$};
\draw (3.5,3.5) node{$*$};
\draw (4.5,3.5) node{$*$};
\draw (4.5,4.5) node{$*$};
\draw (4.5,5.5) node{$*$};
\draw (4.5,6.5) node{$*$};
\draw (5.5,7.5) node{$*$};
\draw (7.5,6.5) node{$*$};
\draw (7.5,7.5) node{$*$};
\draw (7.5,8.5) node{$*$};
\draw (6.5,9.5) node{$R$};
\end{tikzpicture}
\capt{\label{q weight example} A placement, $\pi$, with $\inv_3(\pi)=19$.}
\efi

Define $r_{k,m}(B;q)$ by
\begin{equation}\label{q rook poly}
r_{k,m}(B;q) = \sum_{\pi} q^{\inv_m(\pi)}
\end{equation}
where $\pi$ ranges over all $m$-level rook placements of $k$ rooks on $B$.
Define boards $B$ and $B'$ to be 
\emph{$m$-level $q$-rook equivalent} if $r_{k,m}(B;q)=r_{k,m}(B';q)$
for all nonnegative integers $k$. We will give bijective proofs of
the $m$-level $q$-rook equivalence of various boards, by showing that
the bijections given earlier preserve the $q$-power.

We will need two formulas for the $m$-inversion number of an $m$-level rook placement
$\pi$, one which adds the contributions of each level, and 
another which sums over the contributions of each column.
The first formula uses the numbering of cells in each level
from Definition~\ref{level numbering}. For each level $p$ such that $\pi$ has a rook $R$ 
in level $p$, let $h_p(\pi)$ count the cells in level $p$ with a higher 
number than the cell containing $R$. Also let $\NW_p(\pi)$ be the number of
rooks in $\pi$ northwest of $R$, that is, rooks in a higher level and earlier
column than $R$.  For each level $p$ containing no rook,
let $h_p(\pi)$ be the total number of cells in this level,
and let $\NW_p(\pi)$ be the number of rooks in higher levels than $p$. 
Note that the definition for a level containing no rooks can be considered as a limiting case of the one for a level containing a rook  by letting the rook move to the right until it exits the board.  So in our proofs we will only consider the first case as the second one will automatically follow using this procedure.
Define, using ``h" for ``horizontal,"
\begin{equation}\label{eq:qwt-lvl}
 \hinv_p(\pi)=h_p(\pi)-m\cdot\NW_p(\pi).
\end{equation}
It is routine to check that for any board $B$,
$\inv_m(\pi)=\sum_{p\geq 1} \hinv_p(\pi)$.
In particular, if a column has fewer than $m$ cells in level $p$,
there can be no rook weakly west of this column in a higher level. Thus each 
rook counted by $\NW_p(\pi)$ removes a full $m$ cells from the cells that
would have contributed to $\inv_m(\pi)$ in level $p$. In the example in Figure~\ref{q weight example},
$\hinv_1(\pi)=6$, $\hinv_2(\pi)=7$, $\hinv_3(\pi)=6$, and $\hinv_4(\pi)=0$.

\bfi
\begin{tikzpicture}[scale=.5]
\foreach \y in {0,3}
	\draw[very thick] (0,\y)--(7,\y);
\foreach \y in {1,2,4}
	\draw (0,\y)--(7,\y);
\foreach \y in {5,7}
	\draw (3,\y)--(7,\y);
\draw[very thick] (3,6)--(7,6);
\foreach \y in {8,10}
	\draw (4,\y)--(7,\y);
\draw[very thick] (4,9)--(7,9);
\foreach \x in {0,1,2}
	\draw (\x,0)--(\x,4);
\draw (3,0)--(3,7);
\foreach \x in {4,5,6,7}
	\draw (\x,0)--(\x,10);
\draw (.5,3.5) node{$R$};
\draw (2.5,1.5) node{$R$};
\draw (4.5,9.5) node{$R$};
\draw (5.5,7.5) node{$R$};
\draw (1.5,.5) node{$*$};
\draw (1.5,1.5) node{$*$};
\draw (1.5,2.5) node{$*$};
\draw (2.5,2.5) node{$*$};
\draw (3.5,6.5) node{$*$};
\draw (5.5,8.5) node{$*$};
\end{tikzpicture}
\hs{50pt}
\begin{tikzpicture}[scale=.5]
\foreach \y in {0,3}
	\draw[very thick] (0,\y)--(7,\y);
\foreach \y in {1,2}
	\draw (0,\y)--(7,\y);
\foreach \y in {4,5}
	\draw (2,\y)--(7,\y);
\draw[very thick] (2,6)--(7,6);
\draw (3,7)--(7,7);
\draw (4,8)--(7,8);
\draw[very thick] (4,9)--(7,9);
\foreach \y in {10,11}
	\draw (6,\y)--(7,\y);
\draw[very thick] (6,12)--(7,12);
\foreach \x in {0,1}
	\draw (\x,0)--(\x,3);
\draw (2,0)--(2,6);
\draw (3,0)--(3,7);
\foreach \x in {4,5}
	\draw (\x,0)--(\x,9);
\foreach \x in {6,7}
	\draw (\x,0)--(\x,12);
\draw (1.5,1.5) node{$R$};
\draw (2.5,5.5) node{$R$};
\draw (4.5,7.5) node{$R$};
\draw (6.5,11.5) node{$R$};
\draw (.5,.5) node{$*$};
\draw (.5,1.5) node{$*$};
\draw (.5,2.5) node{$*$};
\draw (1.5,2.5) node{$*$};
\draw (3.5,6.5) node{$*$};
\draw (4.5,8.5) node{$*$};
\end{tikzpicture}
\capt{\label{q weight singleton} On the left, a placement $\pi$ with $\inv_3\pi=6$ on a Ferrers board. On the right, the corresponding placement on $B_S$.}
\efi

The second formula for $\inv_m(\pi)$ classifies cells based on their columns.
For each column $i$ such that $\pi$ has a rook $R$ in column $i$,
let $h_i'(\pi)$ count the cells in column $i$ above $R$, and let
$\NW_i'(\pi)$ be the number of rooks in $\pi$ northwest of $R$.
For each column $i$ containing no rook, let $h_i'(\pi)$ be the total
number of cells in this column, and let $\NW_i'(\pi)$ be the
number of rooks in earlier columns than $i$. 
Again, the second case is a limiting instance of the first where now the rook moves down until it is off the board.
Define, using ``v" for ``vertical,"
\begin{equation}\label{eq:qwt-col}
 \vinv_i(\pi)=h_i'(\pi)-m\cdot\NW_i'(\pi).
\end{equation}
One may check that for any rook placement $\pi$ on a singleton
board
$B$, $\inv_m(\pi)=\sum_{i\geq 1} \vinv_i(\pi)$. The singleton condition ensures
that any rook counted by $\NW_i'(\pi)$ must remove a full $m$ cells from
the cells that would have contributed to $\inv_m(\pi)$ in column $i$.
In the example from Figure~\ref{q weight example}, it happens that $\inv_m(\pi)=19$ is not the sum of the
entries in $(\vinv_1(\pi),\ldots,\vinv_8(\pi))=(2,3,1,1,4,5,0,1)$
because $B$ is not a singleton board and the rook to the northwest of the rook in the rightmost column only cancels one cell in the rightmost column, rather than a full $3$.

\subsection{Mapping placements on $B$ to placements on $B_S$}
\label{subsec:map-B-to-BS}

We now prove that the bijection in Lemma~\ref{singleton},
mapping $m$-level rook placements on an arbitrary board $B$
to $m$-level rook placements on the singleton board $B_S$, 
preserves the $m$-inversion number. See Figure~\ref{q weight singleton} for an example in the case $m=3$.

\begin{lem}
If $\pi$ is a rook placement on a Ferrers board $B$ that maps to 
the rook placement $\pi_S$ on $B_S$ when we apply the bijection in Lemma~\ref{singleton}, then $\inv_m(\pi)=\inv_m(\pi_S)$.
\end{lem}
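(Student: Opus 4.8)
The plan is to compute $\inv_m$ level by level using the identity $\inv_m(\pi)=\sum_{p\ge 1}\hinv_p(\pi)=\sum_{p\ge 1}\bigl(h_p(\pi)-m\,\NW_p(\pi)\bigr)$, which holds for every board and in particular for both $B$ and $B_S$. Writing $l_p$ for the common number of cells in level $p$, note that if the rook in level $p$ has level-number $n_p(\pi)$ then exactly the cells numbered $n_p(\pi)+1,\dots,l_p$ have a higher level-number, so $h_p(\pi)=l_p-n_p(\pi)$. Summing over all levels, with the convention that a rookless level contributes $n_p=0$, gives $\sum_p h_p(\pi)=(\#B)-\sum_{p}n_p(\pi)$, where $\#B$ is the total number of cells; the same formula holds for $\pi_S$ with the identical value of $\#B$, since $B$ and $B_S$ have equally many cells in each level. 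Hence it suffices to prove
\begin{equation*}
\sum_p n_p(\pi)+m\sum_p\NW_p(\pi)=\sum_p n_p(\pi_S)+m\sum_p\NW_p(\pi_S).
\end{equation*}

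First I would pin down how the bijection of Lemma~\ref{singleton} moves an individual rook. Each rook is initially placed at its original level-number in $B_S$, and is thereafter only ever pushed one column to the left, always through columns containing a full $m$ cells in its level (this is exactly the hypothesis that makes the shift legal in the proof of Lemma~\ref{singleton}). I would then check the elementary fact that moving a rook one column left through a block of full-$m$ columns raises its level-number by precisely $m$, because in the level numbering each such column accounts for $m$ consecutive values. Consequently $n_p(\pi_S)=n_p(\pi)+m\,s_p$, where $s_p$ is the number of left-shifts undergone by the level-$p$ rook, so that $\sum_p n_p(\pi_S)=\sum_p n_p(\pi)+mS$, where $S=\sum_p s_p$ is the total number of left-shifts.

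The second ingredient is to account for $\NW$. Every left-shift is triggered, in the proof of Lemma~\ref{singleton}, by an upper rook $R'$ moving from some column $i$ to a column $i'>i$, which forces each lower rook in a column of $(i,i']$ one step left. At the instant of such a shift the shifted rook $R$ sits in a column $c$ with $i<c\le i'$, so beforehand $R'$ (being in a higher level and an earlier column) is northwest of $R$, while afterward $R'$ occupies column $i'\ge c>c-1$ and is therefore northeast of $R$. Thus each left-shift converts exactly one northwest pair into a northeast pair. Granting that these are the only changes to the northwest relation among rooks, I obtain $\sum_p\NW_p(\pi_S)=\sum_p\NW_p(\pi)-S$, and substituting both relations into the displayed reduction makes its two sides equal, proving $\inv_m(\pi)=\inv_m(\pi_S)$.

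The hard part will be justifying the clause ``these are the only changes to the northwest relation.'' Since both rooks of a pair can move --- the upper one rightward under its own placement and leftward under shifts from still-higher levels, the lower one leftward under shifts --- I must rule out spurious crossings, and in particular any pair that crosses from northwest to northeast and later back. I would handle this by fixing the order in which levels are processed and tracking the relative column order of each fixed pair of rooks throughout the procedure, showing that the only crossing a pair can ever undergo is the single northwest-to-northeast crossing described above. The limiting convention for empty levels (a missing rook treated as having slid off the right edge, so $n_p=0$) lets me include rookless levels with no separate argument. Once this bookkeeping is secured, the two counting identities combine to give the result.
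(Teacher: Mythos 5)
Your proposal is correct and takes essentially the same route as the paper's proof: both decompose $\inv_m$ into the level contributions $\hinv_p = h_p - m\cdot\NW_p$, observe that the first stage of the bijection preserves level numbers (hence each $h_p$), and then match each left shift of a lower rook (which changes its level number, equivalently $h_p$, by exactly $m$) with the destruction of exactly one northwest pair. The ``hard part'' you flag --- ruling out any other changes to the northwest relation --- is precisely the step the paper also treats as routine in its per-level assertion $\NW_p(\pi')=\NW_p(\pi)-1$, so your aggregated identity $\sum_p n_p + m\sum_p \NW_p = \text{const}$ is just a summed version of the paper's levelwise statement $\hinv_p(\pi_S)=\hinv_p(\pi)$.
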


\begin{proof}
We use~\eqref{eq:qwt-lvl} to show that $\hinv_p(\pi_S)=\hinv_p(\pi)$ for
each level $p$. Let $\pi'$ 
be the placement created from $\pi$ in the
first stage of the map, in which all rooks remain in their original
numbered cell in their level. By definition of the level numbering, 
$h_p(\pi')=h_p(\pi)$ for all $p$. 
Consider a rook that moves from column $i$ to
column $i'>i$ in the first stage, 
and a level $p$ below that rook
that has a rook in the interval $(i,i']$. In such a level,
$\NW_p(\pi')=\NW_p(\pi)-1$, so $\hinv_m(\pi')=\hinv_m(\pi)+m$. The second stage
corrects for this increase by moving the rook in level $p$ one
column to the left, which decreases $h_p$ by $m$. The net effect
is that $\hinv_p(\pi_S)=\hinv_p(\pi)$ and hence $\inv_m(\pi_S)=\inv_m(\pi)$, as needed.
\end{proof}

\subsection{Analysis of the $l$-operator}
\label{subsec:analyze-l-op}

Let $B$ be a singleton board. We now show that the bijection from 
Lemma~\ref{l operator}, which maps an $m$-level rook placement $\pi$ on $B$ to an
$m$-level rook placement $l(\pi)$ on $l(B)$, preserves the $m$-inversion number.

\begin{lem}
If $\pi$ is an $m$-level rook placement on a singleton board $B$, then $\inv_m(\pi)=\inv_m(l(\pi))$.
\end{lem}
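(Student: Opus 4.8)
The plan is to exploit the two complementary formulas for $\inv_m$ established earlier: the horizontal formula \eqref{eq:qwt-lvl}, which sums $\hinv_p(\pi)$ over levels, and the vertical formula \eqref{eq:qwt-col}, which sums $\vinv_i(\pi)$ over columns. The key observation is that the bijection in Lemma~\ref{l operator} is, at its heart, a transposition operation: it sends level $p$ of $B$ to column $t-p+1$ of $l(B)$, matching the cell numbered $n$ in a level to the cell numbered $n$ in the corresponding column. Since both sides are singleton boards, both formulas for $\inv_m$ are available, and I would show that the \emph{horizontal} contributions on $B$ match the \emph{vertical} contributions on $l(B)$, level-by-column.

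First I would fix an $m$-level rook placement $\pi$ on $B$ and its image $l(\pi)$ on $l(B)$, and set up the correspondence: a rook in level $p$, column $i$ of $B$ becomes a rook in column $t-p+1$ of $l(B)$, sitting in the cell whose in-column number equals its in-level number in $B$. I would then argue that $\hinv_p(\pi)=\vinv_{t-p+1}(l(\pi))$ for every $p$. The horizontal quantity $h_p(\pi)$ counts cells in level $p$ with a \emph{higher level-number} than the rook $R$; by the construction of the level numbering (Definition~\ref{level numbering}) and the matching of number-sets between a level of $B$ and a column of $l(B)$, these cells correspond exactly to the cells in column $t-p+1$ of $l(B)$ lying \emph{above} the image rook, which is precisely $h'_{t-p+1}(l(\pi))$. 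Summing over $p$ (equivalently over all columns of $l(B)$) and invoking $\inv_m(\pi)=\sum_p\hinv_p(\pi)$ on $B$ and $\inv_m(l(\pi))=\sum_i\vinv_i(l(\pi))$ on the singleton board $l(B)$ would then give the result.

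The main obstacle, as I see it, is the $\NW$/northwest term. I must check that $\NW_p(\pi)$, counting rooks strictly northwest of $R$ in $B$ (higher level and earlier column), equals $\NW'_{t-p+1}(l(\pi))$, counting rooks northwest of the image rook in $l(B)$. Under the level-to-column transposition the ``higher level, earlier column'' relation in $B$ must be shown to translate into ``earlier column, higher cell'' — i.e.\ northwest — in $l(B)$. Concretely, a rook in level $q>p$ and column $j<i$ of $B$ maps to column $t-q+1<t-p+1$ of $l(B)$, so it lands in an earlier column; I would then need to verify it also lands higher, using the fact that its in-level number in level $q$ exceeds the relevant threshold, so that the transposed coordinates genuinely realize the northwest condition relative to the image of $R$. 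This is the one place where the precise geometry of the numbering and the singleton hypothesis must be tracked carefully; once the $\NW$ correspondence is pinned down, the identity $\hinv_p(\pi)=\vinv_{t-p+1}(l(\pi))$ follows termwise since both the $h$-terms and the $m\cdot\NW$-terms agree.

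Finally I would remark that the degenerate case of a level (resp.\ column) containing no rook is handled automatically by the limiting convention already adopted in the text — sliding the phantom rook off the board to the right (resp.\ down) — so it suffices to treat the generic case, and summing the termwise equalities completes the proof that $\inv_m(\pi)=\inv_m(l(\pi))$.
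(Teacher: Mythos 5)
Your argument is correct, but it follows a genuinely different route from the paper's. The paper's own proof is a terse cell-by-cell check: the level numbering of $B$ and the column numbering of $l(B)$ induce a bijection between the cells of the two boards, and one verifies directly from the three defining conditions of $\inv_m$ that a cell contributes to $\inv_m(\pi)$ exactly when its image contributes to $\inv_m(l(\pi))$; no use is made of the $\hinv$/$\vinv$ decompositions. You instead prove the termwise identity $\hinv_p(\pi)=\vinv_{t-p+1}(l(\pi))$, a finer statement than the lemma itself, and this leans on the vertical formula, which is legitimate here precisely because $l(B)$ is singleton. The one step you flag but leave unfinished---the $\NW$ correspondence---does go through, and the singleton hypothesis closes it in one line: if $B$ has $n$ columns, then every column strictly to the right of column $i$ has exactly $m$ cells in each level met by column $i$, so a rook in column $i$, level $p$ has level-number in the interval $((n-i)m,\,(n-i)m+m]$ and therefore lands in level $n-i+1$ of column $t-p+1$ of $l(B)$. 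In (column, level) coordinates the map on rooks is thus $(i,p)\mapsto(t-p+1,\,n-i+1)$, which reverses both coordinate orders, so a pair of rooks satisfies $j<i$ and $q>p$ in $B$ if and only if the image pair is in northwest position in $l(B)$. Be aware that you need this equivalence in both directions to conclude the equality of counts $\NW_p(\pi)=\NW'_{t-p+1}(l(\pi))$; your sketch states only the forward implication, though the coordinate formula gives both at once. As for what each approach buys: the paper's is shorter and avoids all bookkeeping with the decomposition formulas, while yours isolates exactly where the singleton hypothesis enters and yields the stronger level-to-column refinement of the lemma.
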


\begin{proof}
The level numbering of $B$ and the column numbering of $l(B)$ induce a bijection between the squares of $B$ and the squares of $l(B)$.  It is easy to see from the definitions that a square of $B$ contributes to $\inv_m(\pi)$ if and only if the corresponding square of $l(B)$ contributes to $\inv_m(l(\pi))$.  So the lemma is proved.
\end{proof}

The example in Figure~\ref{q weight l} illustrates the ideas in this proof in a case where $m=2$;
 note that the starred cells in each level of the original 
 placement become starred cells in each column of the new placement.

\bfi
\begin{tikzpicture}[scale=.5]
\draw[very thick] (0,0)--(6,0);
\draw (0,1)--(6,1);
\foreach \y in {2,4}
	\draw[very thick] (1,\y)--(6,\y);
\foreach \y in {3,5}
	\draw (1,\y)--(6,\y);
\draw[very thick] (2,6)--(6,6);
\draw (2,7)--(6,7);
\draw[very thick] (3,8)--(6,8);
\draw (0,0)--(0,1);
\draw (1,0)--(1,5);
\draw (2,0)--(2,7);
\foreach \x in {3,4,5,6}
	\draw (\x,0)--(\x,8);
\draw (1.5,1.5) node{$R$};
\draw (2.5,6.5) node{$R$};
\draw (3.5,4.5) node{$R$};
\draw (5.5,3.5) node{$R$};
\draw (.5,.5) node{$*$};
\draw (1.5,2.5) node{$*$};
\draw (1.5,3.5) node{$*$};
\draw (1.5,4.5) node{$*$};
\draw (3.5,5.5) node{$*$};
\draw (4.5,2.5) node{$*$};
\draw (4.5,3.5) node{$*$};
\end{tikzpicture}
\hs{50pt}
\begin{tikzpicture}[scale=.5]
\foreach \y in {0,2,4,6}
	\draw[very thick] (0,\y)--(4,\y);
\foreach \y in {1,3,5,7}
	\draw (0,\y)--(4,\y);
\draw[very thick] (1,8)--(4,8);
\draw (1,9)--(4,9);
\draw[very thick] (2,10)--(4,10);
\draw (3,11)--(4,11);
\draw (0,0)--(0,7);
\draw (1,0)--(1,9);
\draw (2,0)--(2,10);
\draw (3,0)--(3,11);
\draw (4,0)--(4,11);
\draw (.5,6.5) node{$R$};
\draw (1.5,4.5) node{$R$};
\draw (2.5,1.5) node{$R$};
\draw (3.5,9.5) node{$R$};
\draw (1.5,5.5) node{$*$};
\draw (1.5,8.5) node{$*$};
\draw (2.5,2.5) node{$*$};
\draw (2.5,3.5) node{$*$};
\draw (2.5,8.5) node{$*$};
\draw (2.5,9.5) node{$*$};
\draw (3.5,10.5) node{$*$};
\end{tikzpicture}
\capt{\label{q weight l} On the left, a $2$-level placement with a $2$-inversion number of $7$ on a singleton board. On the right, the corresponding placement after applying the $l$-operator to the board on the left.}
\efi

\subsection{The local $l$-operator}
\label{subsec:local-l-op}

Next we show that the bijection in Lemma~\ref{local l} preserves the $m$-inversion number.

\begin{lem}
Given an $m$-level rook placement $\pi$ on a singleton board $B$, a column $i$, and a level $p$ such that
$l_{i,p}$ is permissible for $B$, let $\pi'$ denote the corresponding placement on $l_{i,p}(B)$ as described in Lemma~\ref{local l}.  Then $\inv_m(\pi)=\inv_m(\pi')$.
\end{lem}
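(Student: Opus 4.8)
The plan is to prove $\inv_m(\pi)=\inv_m(\pi')$ by decomposing the board into pieces on which the local $l$-operation acts in a controlled way, and tracking the $m$-inversion statistic on each piece separately. The key observation is that $l_{i,p}(B)$ differs from $B$ only by transposing the subboard $B_{i,p}$ via the global $l$-operator, while leaving the rest of $B$ fixed (apart from the corrective column-shifts and level-shifts built into Lemma~\ref{local l}). Since we have already shown in the previous lemma that the global $l$-operator preserves the $m$-inversion number, the heart of the matter is to reconcile the contributions to $\inv_m$ coming from cells inside $B_{i,p}$, cells below level $p$, and cells to the right of column $i$, and to verify that the corrective shifts do not change the total count.

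**The main decomposition.**

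First I would split $\inv_m(\pi)$ into three groups of contributing cells: those strictly inside the subboard $B_{i,p}$, those lying below level $p$, and those lying in a level intersecting $B_{i,p}$ but in a column strictly to the right of $i$ (equivalently, to the right of the transposed region). For cells inside $B_{i,p}$, I would argue that the restriction of $\pi$ to $B_{i,p}$ is itself an $m$-level rook placement and that the global-$l$ lemma applied to the singleton subboard $B_{i,p}$ shows the internal contribution is preserved — but with the crucial caveat that the $\NW$ count for an internal cell also sees rooks lying outside $B_{i,p}$ in higher levels and earlier columns. I expect the cleanest bookkeeping to use the horizontal formula~\eqref{eq:qwt-lvl} for levels at or above $p$ and to track how $h_p$ and $\NW_p$ each change under the transposition, so that their combination $\hinv_p=h_p-m\cdot\NW_p$ is invariant level by level.

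**Handling the corrective shifts.**

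The second step is to show the corrective moves in Lemma~\ref{local l} are $\inv_m$-neutral. When a rook below level $p$ is shifted to a corresponding column via the canonical column bijection, I would check that the singleton hypothesis forces every intervening column to have a full $m$ cells in the relevant lower levels, so that the shift changes $h_p$ and $\NW_p$ in matched multiples of $m$ — mirroring exactly the cancellation argument used in the $B\to B_S$ lemma. The analogous argument for the level-shifts of rooks right of column $i$ would use the column formula~\eqref{eq:qwt-col}, again invoking that $l_{i,p}(B)$ is singleton (guaranteed by permissibility via Lemma~\ref{perm:lem}) so that each displaced rook removes a full $m$ cells. The main obstacle, I anticipate, is precisely this interface accounting: ensuring that a single rook can simultaneously play the role of an internal rook for the transposed subboard and an external $\NW$-rook for cells below or to the right, without being double-counted or miscounted. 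I would resolve this by fixing one of the two formulas — likely~\eqref{eq:qwt-lvl} summed over all levels — and computing the net change in $\hinv_p$ for each level $p$ separately, showing each net change is zero; the permissibility inequalities of Definition~\ref{perm:def} should be exactly what is needed to guarantee the geometric alignment that makes these cancellations exact.
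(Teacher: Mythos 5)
Your plan is essentially the paper's own proof: the paper likewise splits $\inv_m$ into the contribution of the cells of $B_{i,p}$ (disposed of by the already-proved invariance of $\inv_m$ under the global $l$-operator, \S\ref{subsec:analyze-l-op}), the cells below level $p$, and the cells to the right of column $i$, and then shows that the corrective column-shifts and level-shifts of Lemma~\ref{local l} leave each level's (respectively, each column's) contribution unchanged, using the singleton property of $B$ and $l_{i,p}(B)$ exactly as you propose. One small remark: the ``interface'' obstacle you anticipate is vacuous, since $B_{i,p}$ occupies the upper-left corner of $B$, so every rook above a cell of $B_{i,p}$ in its column, or to its left in its level, lies in $B_{i,p}$ itself; hence the interior contribution is precisely $\inv_m$ of the restricted placement computed on the subboard, with no external $\NW$-corrections needed.
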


\bfi
\begin{tikzpicture}[scale=.5]
\foreach \y in {0,2}
	\draw[very thick] (0,\y)--(6,\y);
\foreach \y in {1,3}
	\draw (0,\y)--(6,\y);
\foreach \y in {4,6}
	\draw[very thick] (1,\y)--(6,\y);
\foreach \y in {5,7}
	\draw (1,\y)--(6,\y);
\draw[very thick] (2,8)--(6,8);
\draw (4,9)--(6,9);
\draw[very thick] (4,10)--(6,10);
\draw (0,0)--(0,3);
\draw (1,0)--(1,7);
\foreach \x in {2,3}
	\draw (\x,0)--(\x,8);
\foreach \x in {4,5,6}
	\draw (\x,0)--(\x,10);
\draw (.5,2.5) node{$R$};
\draw (2.5,6.5) node{$R$};
\draw (3.5,1.5) node{$R$};
\draw (4.5,4.5) node{$R$};
\draw (5.5,8.5) node{$R$};
\draw (1.5,.5) node{$*$};
\draw (1.5,1.5) node{$*$};
\draw (1.5,4.5) node{$*$};
\draw (1.5,5.5) node{$*$};
\draw (1.5,6.5) node{$*$};
\draw (2.5,7.5) node{$*$};
\draw (3.5,4.5) node{$*$};
\draw (3.5,5.5) node{$*$};
\draw (4.5,5.5) node{$*$};
\draw (4.5,8.5) node{$*$};
\draw (4.5,9.5) node{$*$};
\draw (5.5,9.5) node{$*$};
\end{tikzpicture}
\hs{50pt}
\begin{tikzpicture}[scale=.5]
\foreach \y in {0,2}
	\draw[very thick] (0,\y)--(6,\y);
\draw (0,1)--(6,1);
\foreach \y in {3,5,7}
	\draw (1,\y)--(6,\y);
\foreach \y in {4,6}
	\draw[very thick] (1,\y)--(6,\y);
\draw[very thick] (2,8)--(6,8);
\draw (3,9)--(6,9);
\draw[very thick] (4,10)--(6,10);
\draw (0,0)--(0,2);
\draw (1,0)--(1,7);
\draw (2,0)--(2,8);
\draw (3,0)--(3,9);
\foreach \x in {4,5,6}
	\draw (\x,0)--(\x,10);
\draw (1.5,4.5) node{$R$};
\draw (2.5,1.5) node{$R$};
\draw (3.5,8.5) node{$R$};
\draw (4.5,2.5) node{$R$};
\draw (5.5,6.5) node{$R$};
\draw (.5,.5) node{$*$};
\draw (.5,1.5) node{$*$};
\draw (1.5,5.5) node{$*$};
\draw (1.5,6.5) node{$*$};
\draw (2.5,2.5) node{$*$};
\draw (2.5,3.5) node{$*$};
\draw (2.5,6.5) node{$*$};
\draw (2.5,7.5) node{$*$};
\draw (4.5,3.5) node{$*$};
\draw (4.5,6.5) node{$*$};
\draw (4.5,7.5) node{$*$};
\draw (5.5,7.5) node{$*$};
\end{tikzpicture}
\capt{\label{q weight local l} On the left, a placement with a $2$-inversion number of $12$ on a singleton board. On the right, the corresponding placement after applying the local $l$-operator $l_{4,2}$ to the board on the left.}
\efi

\begin{proof}
We have already shown in \S\ref{subsec:analyze-l-op}
that the contribution to the $m$-inversion number coming from
the cells in $B_{i,p}$ and $l(B_{i,p})$ is the same.  We show that the adjustments made below $B_{i,p}$ do 
not affect the $m$-inversion number, as follows.
By the construction of the bijection in Lemma~\ref{local l}, every rook beneath $B_{i,p}$ is adjusted horizontally until there are as many columns to the left of it in $l_{i,p}(B)$ that do not contain rooks in $l(B_{i,p})$ as there  were before applying the $l$-operator. Since the relative order of rooks in levels beneath level $p$ is preserved, each level beneath level $p$ will have the same number of cells counted by the $m$-inversion number before and after the adjustment. 
An analogous argument shows that the level
adjustments to the right of $B_{i,p}$ do not change the $m$-inversion number, since after the adjustment each rook to the right of $B_{i,p}$ will have the same number of levels which get counted for the $m$-inversion number above it in $l_{i,p}(B)$ as it does in $B$.
\end{proof}

See Figure~\ref{q weight local l} 
for an example with $m=2$. Note that the rook in $(4,2)$ 
on the left moves to $(3,2)$ keeping one column with no higher rook  to the left of it, so the first level contributes $2$ to the $m$-inversion number in both placements. Also consider the rook in $(5,5)$ on the left which moves to $(5,3)$ maintaining its position in the bottom row of its level. Even though there are more rooks to the northwest of it in the right-hand diagram, the number of levels above it that do not contain rooks to the left of it is unchanged, so the fifth column contributes $3$ to the $m$-inversion number.

\begin{thm}
If $B$ and $B'$ are $m$-level rook equivalent Ferrers boards, then $r_{k,m}(B;q) = r_{k,m}(B';q)$.
\end{thm}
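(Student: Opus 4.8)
The plan is to reduce the claim about arbitrary $m$-level rook equivalent boards to the sequence of elementary bijections already constructed in this paper, and then invoke the three preceding lemmas which establish that each of those bijections preserves $\inv_m$. Concretely, I would mirror the proof of Theorem~\ref{bijections} exactly, but now track the $q$-power rather than merely the cardinality of the placement sets.

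First I would fix a nonnegative integer $k$ and recall from Theorem~\ref{bijections} that the explicit bijection between $m$-level rook placements of $k$ rooks on $B$ and those on the unique $m$-increasing board $B_m$ is a composition of three types of maps: the map of Lemma~\ref{singleton} sending placements on $B$ to placements on $B_S$, the map of Lemma~\ref{l operator} (which appears as a special case of the local operator), and finitely many applications of the local $l$-operator of Lemma~\ref{local l}. Since $B$ and $B'$ are $m$-level rook equivalent, by Theorem~\ref{unique mrp} they share the same $m$-increasing representative $B_m$, so composing the bijection for $B$ with the inverse of the bijection for $B'$ gives the desired explicit bijection $\Phi$ between placements on $B$ and placements on $B'$.

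Next I would assemble the three inversion-preservation lemmas of this section. The first lemma of \S\ref{subsec:map-B-to-BS} shows $\inv_m(\pi)=\inv_m(\pi_S)$ for the Lemma~\ref{singleton} map; the lemma of \S\ref{subsec:analyze-l-op} shows $\inv_m(\pi)=\inv_m(l(\pi))$ for the $l$-operator; and the lemma of \S\ref{subsec:local-l-op} shows $\inv_m(\pi)=\inv_m(\pi')$ for each permissible local $l$-operation. Because each building block of $\Phi$ preserves $\inv_m$, and inverses of $\inv_m$-preserving bijections are themselves $\inv_m$-preserving, the composite $\Phi$ satisfies $\inv_m(\Phi(\pi))=\inv_m(\pi)$ for every placement $\pi$. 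Summing $q^{\inv_m(\pi)}$ over all $k$-rook placements on $B$ and transporting the sum across $\Phi$ then yields
\begin{equation*}
r_{k,m}(B;q)=\sum_{\pi} q^{\inv_m(\pi)}=\sum_{\pi} q^{\inv_m(\Phi(\pi))}=r_{k,m}(B';q),
\end{equation*}
as desired.

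I expect the only genuine subtlety, rather than a real obstacle, to be the bookkeeping that guarantees the composite bijection is well-defined across the whole equivalence class: one must confirm that every intermediary board arising in the chain from $B$ to $B_m$ (and from $B'$ to $B_m$) is a singleton board, so that each inversion-preservation lemma actually applies, and that the termination argument of Lemma~\ref{increase} and Theorem~\ref{bijections} still controls the process. All of this, however, is inherited verbatim from the earlier construction, so no new combinatorial work is required; the proof is essentially a transport-of-structure argument stitching the three section lemmas onto the bijection of Theorem~\ref{bijections}.
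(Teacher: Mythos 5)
Your proposal is correct and follows the same route as the paper: the paper's proof likewise composes the bijections from Theorem~\ref{bijections} and invokes the three preceding inversion-preservation lemmas to conclude the composite preserves $\inv_m$, hence $r_{k,m}(B;q) = r_{k,m}(B';q)$. Your additional bookkeeping remarks (that all intermediary boards are singleton, so the lemmas apply) are already handled inside the proof of Theorem~\ref{bijections} and introduce no new issues.
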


\begin{proof}
We can compose all the bijections, as in the proof of Theorem~\ref{bijections}, to obtain an $m$-inversion-preserving bijection between $m$-level rook placements of $k$ rooks on $B$ and $B'$. By the definition of $r_{k,m}(B;q)$ in~\S\ref{q rook poly},
this shows that $r_{k,m}(B;q) = r_{k,m}(B';q)$.
\end{proof}

\section{A second bijection on $m$-level rook placements}
\label{sec:LR placement}
Our next two main results will require the Garsia-Milne Involution Principle. First, we will use the Involution Principle to construct another explicit bijection between two arbitrary $m$-level rook placements of $k$ rooks on $m$-level rook equivalent singleton boards.

\begin{thm}[Garsia-Milne Involution Principle~\cite{gm:mcb}]
\label{garsia milne}
Consider a triple $(S,T,I)$  where $S$ is a signed set, $I$ is a sign-reversing involution on $S$, and the set $T$ of fixed points of $I$ is required to be a subset of the positive part $S^+$ of $S$.   Let $(S',T',I')$ be defined similarly. Then, given an explicit sign-preserving bijection $f$ from $S$ to $S'$, one can construct an explicit bijection between $T$ and $T'$.
\end{thm}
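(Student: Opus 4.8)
The plan is to prove this via the standard ``shuttle'' (ping-pong) construction, which I find cleanest to package as a path-following argument on an auxiliary graph; assume throughout that $S$ and $S'$ are finite, as in all our applications. Write $S=S^+\sqcup S^-$ and $S'=(S')^+\sqcup(S')^-$ for the positive and negative parts, and recall that $T\subseteq S^+$ and $T'\subseteq(S')^+$ are the fixed-point sets of $I$ and $I'$. I would form the graph $H$ on vertex set $S\sqcup S'$ with three kinds of edges: an $f$-edge joining each $x\in S$ to $f(x)$; an $I$-edge joining each non-fixed $x\in S$ to $I(x)$; and an $I'$-edge joining each non-fixed $y\in S'$ to $I'(y)$. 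Since $f$ is a bijection every vertex meets exactly one $f$-edge, while a vertex meets an involution edge precisely when it is not fixed. Hence every vertex has degree $1$ or $2$, the degree-$1$ vertices being exactly the points of $T\sqcup T'$, so $H$ is a disjoint union of simple paths and cycles whose paths pair up the elements of $T\sqcup T'$.

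First I would record that along any path the edge types strictly alternate between $f$-edges and involution edges, because each interior vertex lies in $S$ or in $S'$ and there meets exactly one $f$-edge and one involution edge. Then, starting at an endpoint $t\in T$ (whose only edge is an $f$-edge) and writing $v_0=t,v_1,v_2,\dots$ for the successive vertices, I would track their signs: since $f$ is sign-preserving and $I,I'$ are sign-reversing, the signs follow the period-four pattern $+,+,-,-,+,+,\dots$ and the side changes exactly at each $f$-edge, giving the side pattern $S,S',S',S,S,S',\dots$.

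The crucial step --- and the one I expect to be the main obstacle --- is to show that the walk can reach a fixed point only on the $S'$ side, that is in $T'$, and never returns to a fixed point of $T$. A fixed point can only sit at a $+$ position, namely a vertex $v_{4j}$ (side $S$) or $v_{4j+1}$ (side $S'$). But for $j\ge 1$ the vertex $v_{4j}=I(v_{4j-1})$ cannot be fixed: because $v_{4j-1}\in S^-$ is non-fixed we have $I(v_{4j})=I(I(v_{4j-1}))=v_{4j-1}\ne v_{4j}$, so $v_{4j}$ still carries an $I$-edge and is interior. Thus the only admissible terminus is some $v_{4j+1}\in(S')^+$, forcing the far endpoint into $T'$; finiteness of $H$ guarantees the path is finite, so the walk terminates, and I define $\phi(t)$ to be this endpoint.

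Finally I would deduce bijectivity from the symmetry of the construction. Since $H$ is built symmetrically from $I$, $I'$, and $f$, the identical argument applied to an endpoint in $T'$ traverses the same path in reverse and lands in $T$; hence every path has exactly one endpoint in $T$ and one in $T'$, and $\phi$ is simply the matching induced by the path components. Equivalently, $\phi^{-1}$ is computed by the same shuttle run with $f^{-1}$ in place of $f$ and the roles of $I$ and $I'$ interchanged. As every step is an explicit evaluation of $f$, $f^{-1}$, $I$, or $I'$, the resulting bijection between $T$ and $T'$ is explicit, as claimed.
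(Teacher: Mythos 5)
Your proof is correct and takes essentially the same approach as the paper: your path-following on the auxiliary graph is exactly the paper's iteration of $(f \circ I \circ f^{-1} \circ I')$ starting from $f(t)$, the path components being precisely the orbits of that shuttle. The only difference is that you supply the correctness details (sign/side bookkeeping, termination via finiteness, and the symmetry argument for bijectivity) that the paper leaves implicit by citing Garsia--Milne.
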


The way that Garsia and Milne define the explicit bijection is as follows.  Start with an element $t \in T \subseteq S^+$. If $f(t) \not\in T'$, then apply $(f \circ I \circ f^{-1} \circ I')$ to $f(t)$. This takes $f(t) \in S'^+$ to $S'^-$, then to $S^-$, then to $S^+$, and finally back to $S'^+$.  Iterating this procedure must ultimately yield an element of $T'$ which is considered the image of $t$ under the desired bijection.

\subsection{A Garsia-Milne bijection for rook placements}
\label{subsec:GMrook}

We will use the Involution Principle to construct a bijection between $m$-level rook placements on two $m$-level rook equivalent singleton boards.  We must first construct a signed set and a sign-reversing involution so that the $m$-level rook placements are the fixed points under the involution. We do this as follows.

Given two Ferrers boards, $B$ and $B'$, we shall say $B$ \emph{fits inside} $B'$ if juxtaposing the two boards with their lower right cells in the same position makes the cells of $B$ a subset of the cells of $B'$. Figure~\ref{placement 1} shows that the thick bordered $B=(2,3)$ fits inside  $B'=(0,2,4,6)$. The shading and rook placement may be ignored for now. Let $\Delta_{n,m}$ denote the triangular Ferrers board $(0,m,2m,\dots,(n-1)m)$. Given a singleton board $B$,   fix $N$ large enough that $B$ fits inside $\Delta_{N,m}$. If $B$ has fewer than $N$ columns, expand $B$ on the left with columns of height zero so $B=(b_0,b_1,\dots,b_{N-1})$ has the same number of columns as $\Delta_{N,m}$. Fix a non-negative integer $k$ with $k < N$ and let the integer $i$ vary over $0 \leq i \leq k$. Then $S$ will consist of all configurations $C$ constructed as follows.  Take  $\Delta_{N,m}$ with $B$ fitting inside and place white rooks $W$ in $i$ cells of $\Delta_{N,m}$ that are outside of $B$ so that no two white rooks are in the same column. Next, place $k-i$ black rooks $R$  forming an $m$-level rook placement on the subboard $\Delta_{N-i,m}$ which is located in the columns of $\Delta_{N,m}$ which do not contain a white rook. We will call this the \emph{inset $\Delta_{N-i,m}$ board}.  Note that the columns of the inset $\Delta_{N-i,m}$ may not be contiguous.

\bfi
\begin{tikzpicture}[scale=.5]
\fill[lightgray] (0,0) rectangle (1,2);
\fill[lightgray] (2,0) rectangle (3,4);
\draw (-1,0)--(3,0);
\foreach \y in {1,2,3,4}
    \draw (\y-1,2*\y)--(3,2*\y);
\foreach \y in {1,2,3} 
   \draw (\y-1,2*\y-1)--(3,2*\y-1);
\foreach \x in {0,1,2}
   \draw (\x,0)--(\x,2*\x+2);
\draw (3,0)--(3,6);
\draw[very thick] (1,0)--(3,0);
\draw[very thick] (1,2)--(2,2);
\draw[very thick] (2,3)--(3,3);
\draw[very thick] (1,0)--(1,2);
\draw[very thick] (2,2)--(2,3);
\draw[very thick] (3,0)--(3,3);
\draw (2.5,2.5) node{$R$};
\draw (.5, .5) node{$R$};
\draw (1.5,3.5) node{$W$};
\draw (-1,3) node{$C=$};
\draw (4,3) node{$\in S$};
\end{tikzpicture}
\hs{50pt}
\begin{tikzpicture}[scale=.5]
\fill[lightgray] (2,0) rectangle (3,2);
\draw (-1,0)--(3,0);
\foreach \y in {1,2,3,4}
    \draw (\y-1,2*\y)--(3,2*\y);
\foreach \y in {1,2,3} 
   \draw (\y-1,2*\y-1)--(3,2*\y-1);
\foreach \x in {0,1,2}
   \draw (\x,0)--(\x,2*\x+2);
\draw (3,0)--(3,6);
\draw[very thick] (1,0)--(3,0);
\draw[very thick] (1,2)--(2,2);
\draw[very thick] (2,3)--(3,3);
\draw[very thick] (1,0)--(1,2);
\draw[very thick] (2,2)--(2,3);
\draw[very thick] (3,0)--(3,3);
\draw (2.5,.5) node{$R$};
\draw (.5, .5) node{$W$};
\draw (1.5,3.5) node{$W$};
\draw (-1.5,3) node{$I(C)=$};
\end{tikzpicture}

\begin{tikzpicture}[scale=.5]
\fill[lightgray] (0,0) rectangle (1,2);
\fill[lightgray] (1,0) rectangle (2,4);
\draw (-1,0)--(3,0);
\foreach \y in {1,2,3,4}
    \draw (\y-1,2*\y)--(3,2*\y);
\foreach \y in {1,2,3} 
   \draw (\y-1,2*\y-1)--(3,2*\y-1);
\foreach \x in {0,1,2}
   \draw (\x,0)--(\x,2*\x+2);
\draw (3,0)--(3,6);
\draw[very thick] (1,0)--(3,0);
\draw[very thick] (1,1)--(2,1);
\draw[very thick] (2,4)--(3,4);
\draw[very thick] (1,0)--(1,1);
\draw[very thick] (2,1)--(2,4);
\draw[very thick] (3,0)--(3,4);
\draw (1.5,2.5) node{$R$};
\draw (.5, .5) node{$R$};
\draw (2.5,5.5) node{$W$};
\draw (-1.5,3) node{$f(C)=$};
\end{tikzpicture}
\hs{55pt}
\begin{tikzpicture}[scale=.5]
\fill[lightgray] (1,0) rectangle (2,2);
\draw (-1,0)--(3,0);
\foreach \y in {1,2,3,4}
    \draw (\y-1,2*\y)--(3,2*\y);
\foreach \y in {1,2,3} 
   \draw (\y-1,2*\y-1)--(3,2*\y-1);
\foreach \x in {0,1,2}
   \draw (\x,0)--(\x,2*\x+2);
\draw (3,0)--(3,6);
\draw[very thick] (1,0)--(3,0);
\draw[very thick] (1,1)--(2,1);
\draw[very thick] (2,4)--(3,4);
\draw[very thick] (1,0)--(1,1);
\draw[very thick] (2,1)--(2,4);
\draw[very thick] (3,0)--(3,4);
\draw (1.5,.5) node{$R$};
\draw (.5, .5) node{$W$};
\draw (2.5,5.5) node{$W$};
\draw (-1.5,3) node{$f(I(C))=$};
\end{tikzpicture}
\capt{\label{placement 1} On the top left, an element in $S$ with sign $-1$. On the top right, the image under $I$ which has sign $+1$. Beneath each board is its image under $f$.
}
\efi

See the top left board of Figure~\ref{placement 1} for an example of such an object $C$ where $m=2$. The singleton board $B=(0,0,2,3)$ fits inside $\Delta_{4,2}$. Here $k=3<4$ and there is $i=1$ white rook on the board $\Delta_{4,2}\setminus B$ and $k-i=2$ black rooks on the board $\Delta_{3,2}$ which is represented by the grey shaded cells inside $\Delta_{4,2}$. The rooks on $\Delta_{3,2}$ form a 2-level rook placement, but there is both a black rook and a white rook in the second level of $\Delta_{4,2}$.

Note that each column of $\Delta_{N,m}$ may contain at most one white rook or black rook.  On the other hand, a level of $\Delta_{N,m}$ will contain at most  one black rook, but may contain any number of white rooks.  Further, define the sign of such a placement to be $(-1)^i$. The sign of the placement on the top left in Figure~\ref{placement 1} is $-1$.

To define $I$ on an element $C\in S$, if all rooks of $C$ are in $B$, and therefore black, then $C$ is a fixed point.  Otherwise, examine the columns of $C$ from left to right until coming to a column with a rook outside of $B$. If that column contains a black rook, change the rook to a white rook, increase $i$ by one, and move every black rook above and to the right of the cell containing the new white rook down $m$ cells. If that column contains a white rook, change it to a black rook, decrease $i$ by one, and move every black rook to the right and at the same level or higher as the new rook up $m$ cells. The placement on the top right in Figure~\ref{placement 1} illustrates what happens to the board on the left under $I$. Similarly, $I$ takes the placement on the right to the placement on the left.  

We must show that $I(C)$ will be an element of $S$. Clearly each column has at most one rook. We claim that each level will still contain at most one black rook.  First, suppose that a black rook is added. In this case all black rooks at its level or above to the right of the new rook move up one level. Furthermore, there can be no black rooks at the same level or higher to the left of the new black rook. This is because the new black rook was a white rook which, by definition, was above board $B$.  Since $B$ is a singleton board, no columns of $B$ to the left of the white rook in question will terminate in the level of the white rook. Thus if there were a black rook at the same level or higher to the left, it too would be outside of board $B$, which contradicts the white rook being the leftmost rook outside of board $B$.  Thus the black rooks still form an $m$-level placement when a black rook is added.  The proof that this also holds when a black rook becomes white is similar.

We must also check that the black rooks continue to fit on the new insert board. When a black rook is added, the black rooks must be placed on a board $\Delta_{N-i+1,m}$ where the column in which the new black rook is placed is added to the columns in the initial inset $\Delta_{N-i,m}$. Since there are no white rooks to the left of the new black rook, there will be no omitted columns to the left of the column containing the new black rook, thus all cells of that column will be in the inset $\Delta_{N-i+1,m}$ and the new rook must be inside $\Delta_{N-i+1,m}$. This means that all the columns to the right of the new black rook that do not contain a white rook will contain $m$ more squares in the inset $\Delta_{N-i+1,m}$ than they did in the inset $\Delta_{N-i,m}$. Thus moving black rooks to the right of the new black rook up $m$ cells will keep them within the new $\Delta_{N-i+1,m}$. Similarly, changing a black rook to a white rook will decrease the number of cells in the columns of $\Delta_{N-i-1,m}$ to the right of the new white rook by $m$, but all black rooks to the right of the new white rook and at a higher level than it are moved down $m$ cells, so they will be in $\Delta_{N-i-1,m}$ because they were in $\Delta_{N-i,m}$ originally. Finally, if there are any black rooks below the level of the new white rook but to its right, they will remain in $\Delta_{N-i-1,m}$ because the first column in $\Delta_{N-i-1,m}$ to the right of the new white rook must go up to at least the level of the new white rook since previously it was a black rook contained in $\Delta_{N-i,m}$.  

By construction, $I$ is an involution. The fixed set of $I$ will be denoted $T$. It is the set of all configurations  which only have rooks on the subboard $B$ and, by definition, these rooks must be black. As such, $T$ is equal to the set of $m$-level rook placements of $k$ rooks on $B$. Furthermore, if a board is not in $T$, then $I$ either increases or decreases the number of white rooks on the board by one. Either way $I$ will change the sign of the board.  
And if a board is in $T$, then it has positive sign.

Given a singleton board $B'$, define $N'$, $S'$, $T'$, and $I'$ similarly for $B'$ contained in $\Delta_{N',m}$. Without a loss of generality, assume $N=N'$. Let $B'=(b'_0,b'_1,\dots,b'_{N-1})$. If $B$ and $B'$ are $m$-level rook equivalent singleton boards we can use $I$ and $I'$ to construct an explicit bijection between $m$-level rook placements of $k$ rooks on $B$ and $m$-level rook placements of $k$ rooks on $B'$. We do this by constructing a sign-preserving bijection between $S$ and $S'$.  We will need the following characterization of when two singleton boards are $m$-level rook equivalent.

The \emph{root vector} of $B$ is 
$$
\zeta_m=(-b_0,m-b_1,\dots,(N-1)m-b_{N-1}).
$$
The following result of Briggs and Remmel determines when two singleton boards are $m$-level rook equivalent simply by
considering their root vectors.

\medskip

\begin{thm}[Briggs-Remmel~\cite{br:mrn}]
\label{briggs remmel}
If $B=(b_1,\dots,b_N)$ is a singleton board then
$$
\sum_{k=0}^N r_{k,m}(B) x\da_{N-k,m} = \prod_{i=1}^N (x+b_i-(i-1)m)
$$
where $x\da_{k,m} = x(x-m)(x-2m)\dots(x-(k-1)m)$.
\end{thm}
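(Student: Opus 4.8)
The plan is to prove this exactly as Goldman, Joichi, and White handled the classical ($m=1$) factorization theorem: show that the two sides agree at infinitely many values of $x$, and then invoke the fact that both are polynomials in $x$ of degree $N$. Since $r_{0,m}(B)=1$, the left-hand side $\sum_k r_{k,m}(B)\,x\da_{N-k,m}$ is monic of degree $N$, and the right-hand side $\prod_{i=1}^N(x+b_i-(i-1)m)$ is visibly monic of degree $N$ as well. Hence it suffices to verify the identity at the infinitely many values $x=tm$, where $t$ ranges over integers with $t\ge N$.

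For such a value I would augment $B$ by adjoining $t$ \emph{full} levels beneath it: let $B_t$ be the board with columns $(b_1+tm,\dots,b_N+tm)$. Since the new levels are completely full, every column is $\equiv 0 \bmod m$ in them, so $B_t$ is again a singleton board; its bottom $t$ levels form a full $N\times tm$ rectangle, and its top levels are a copy of $B$. I then count the $m$-level placements of $N$ rooks on $B_t$ in two ways. This is the maximal number of rooks, so every such placement has exactly one rook per column, occupying $N$ distinct levels. The first count processes the columns from left to right: when the rook in column $i$ is placed, the $i-1$ rooks already placed occupy $i-1$ distinct levels, and I claim each of them removes exactly $m$ cells from column $i$. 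Granting this, column $i$ offers $(b_i+tm)-m(i-1)$ choices, and multiplying gives $\prod_{i=1}^N\bigl(tm+b_i-(i-1)m\bigr)=\prod_{i=1}^N(x+b_i-(i-1)m)$.

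The second count splits a placement according to how many rooks lie in the top portion (the cells of $B$) versus the bottom rectangle. If $k$ rooks lie on $B$, they form an $m$-level placement of $k$ rooks on $B$, contributing $r_{k,m}(B)$; the remaining $N-k$ rooks lie in the $N-k$ unused columns inside the full rectangle, and there each rook chooses a fresh level together with an arbitrary one of that level's $m$ rows. This gives $(tm)\bigl((t-1)m\bigr)\cdots\bigl((t-N+k+1)m\bigr)=(tm)\da_{N-k,m}=x\da_{N-k,m}$ choices, and the levels used below never clash with those used on top. Summing over $k$ produces $\sum_{k} r_{k,m}(B)\,x\da_{N-k,m}$. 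Equating the two counts establishes the identity at every $x=tm$ with $t\ge N$, and the polynomial argument above then upgrades this to equality of polynomials.

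The main obstacle is the claim in the first count that each previously placed rook removes exactly $m$ cells from the current column. A rook in column $j<i$ sits at some level $p$, and since $B_t$ is a Ferrers board, column $i$ also reaches level $p$; the only way column $i$ could lose fewer than $m$ cells is if $p$ were the top level of column $i$ while that level is only partially filled. But in a singleton board the unique column that is non-full at any given level is the \emph{leftmost} column reaching that level, which follows from the level-by-level structure in the proof of Lemma~\ref{singletonexist} (the partial column, having the smallest height, precedes the full ones, and the columns reaching a level form a right-justified block). Hence no rook in an earlier column can occupy a level in which column $i$ is partial, so every blocked level is one in which column $i$ is full and loses exactly $m$ cells. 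Verifying this structural fact carefully, and checking that $B_t$ is singleton, is where the real work lies; the rest is routine bookkeeping. Note that when $m=1$ every Ferrers board is singleton and this argument specializes to the Goldman–Joichi–White factorization.
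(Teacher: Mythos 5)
The paper does not actually prove this theorem: it is imported verbatim from Briggs and Remmel~\cite{br:mrn} and used as a black box, so there is no internal proof to compare against. Your proposal is a correct, self-contained proof, and it follows the classical Goldman--Joichi--White strategy (which is essentially how the cited source argues): evaluate at $x=tm$, augment $B$ to $B_t=(b_1+tm,\dots,b_N+tm)$, and double-count maximal $m$-level placements on $B_t$, once column-by-column to get the product and once by splitting rooks between the copy of $B$ and the full bottom rectangle to get $\sum_k r_{k,m}(B)\,x\da_{N-k,m}$. You also correctly isolate and resolve the one point where the singleton hypothesis is indispensable: at any level of a singleton Ferrers board the unique partially filled column (if any) is strictly shorter than, hence strictly to the left of, every other column meeting that level, so a rook in an earlier column always deletes a full $m$ cells from the current column; without this the column-by-column count of $(x+b_i-(i-1)m)$ would fail, which is exactly why the theorem is stated only for singleton boards.
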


Note that the indexing in the theorem begins at $1$, rather than $0$, simply to be consistent with the original statement of the theorem.

Since the root vector contains exactly the roots of the rook polynomial, we see that two singleton boards are $m$-level rook equivalent if and only if they have the same root vector, up to rearrangement, for a sufficiently large $N$. We are now ready to apply the Garsia-Milne Involution Principle.  

\begin{thm}
\label{rook numbers}
Let $B$ and $B'$ be $m$-level rook equivalent singleton boards. Then there exists an explicit Garsia-Milne bijection between $m$-level rook placements of $k$ rooks on $B$ and $m$-level rook placements of $k$ rooks on $B'$.
\end{thm}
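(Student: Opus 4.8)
The plan is to supply the single missing ingredient needed to invoke the Garsia--Milne Involution Principle (Theorem~\ref{garsia milne}): an explicit \emph{sign-preserving} bijection $f\colon S\to S'$. Once $f$ is in hand, the triples $(S,T,I)$ and $(S',T',I')$ built above, together with $f$, feed directly into Theorem~\ref{garsia milne}, which then manufactures the desired explicit bijection between the fixed-point sets $T$ and $T'$, that is, between the $m$-level rook placements of $k$ rooks on $B$ and on $B'$. Thus essentially all of the work is concentrated in the construction of $f$, and we may assume $N=N'$ and use the same $k$ throughout.

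To build $f$ I would first exploit the root-vector criterion. By Theorem~\ref{briggs remmel} the rook polynomial of a singleton board factors with roots prescribed by its root vector, so $B$ and $B'$ are $m$-level rook equivalent precisely when they have the same root vector up to rearrangement. Writing $g_j=jm-b_j$ for the $j$th entry of $\zeta_m$ --- equivalently, the number of cells of column $j$ lying in $\Delta_{N,m}\setminus B$, i.e.\ the height of the $j$th gap --- this equivalence says exactly that the multisets $\{g_0,\dots,g_{N-1}\}$ and $\{g_0',\dots,g_{N-1}'\}$ coincide. I would then fix a canonical gap-preserving bijection $\sigma$ of column indices by matching, for each fixed gap value, the columns of $B$ having that gap (in left-to-right order) to the columns of $B'$ having that gap (in left-to-right order); this is well defined exactly because the two multisets agree.

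With $\sigma$ fixed, define $f$ on a configuration $C\in S$ carrying $i$ white rooks as follows. Each white rook occupies a gap cell in some column $j$; move it to the gap cell at the same height within the gap of column $\sigma(j)$. This is legal because $g_j=g'_{\sigma(j)}$, and it leaves at most one white rook per column since $\sigma$ is a bijection. The remaining $N-i$ columns of $C$ carry the inset board $\Delta_{N-i,m}$ with its $m$-level placement of black rooks; since the $N-i$ non-white columns of $f(C)$ also carry a copy of $\Delta_{N-i,m}$, transfer the black placement by matching the non-white columns of $C$ to those of $f(C)$ in left-to-right order, keeping each black rook in the same column of the inset and the same row. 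The example in Figure~\ref{placement 1} is precisely this map: the white rook moves from column $2$ to column $3$ via $\sigma$, while the black rooks retain their positions within the inset triangle.

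It then remains to verify the three properties $f$ must possess, and this is where the genuine care lies. First, $f(C)\in S'$: the white rooks land in honest gap cells with at most one per column, and the black rooks still form an $m$-level placement on $\Delta_{N-i,m}$ because both insets are literally the same triangular board and the level structure is preserved by the order-preserving column matching. Second, $f$ is sign-preserving, since it neither creates nor destroys white rooks and so fixes $i$ and hence the sign $(-1)^i$. Third, $f$ is a bijection, its inverse being assembled from $\sigma^{-1}$ together with the order-preserving matching run backwards. The main obstacle I anticipate is exactly this well-definedness check --- confirming that the gap-preserving permutation $\sigma$ arising from equal root vectors really carries white rooks to legal cells, and that the two copies of $\Delta_{N-i,m}$ align so that the black placement transfers to a valid $m$-level placement. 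Once these points are settled, sign-preservation and invertibility are immediate, and Theorem~\ref{garsia milne} completes the proof.
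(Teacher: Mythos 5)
Your proposal is correct and follows essentially the same route as the paper's own proof: both reduce the problem via Theorem~\ref{garsia milne} to constructing a sign-preserving bijection $f\colon S\to S'$, and both build $f$ from the root-vector criterion of Theorem~\ref{briggs remmel} by matching equal-length gap columns of $\Delta_{N,m}\setminus B$ and $\Delta'_{N,m}\setminus B'$ in left-to-right order, sending each white rook to the cell of the same height in the matched column and transferring the black placement identically onto the inset $\Delta'_{N-i,m}$. The verification steps you flag (white rooks land in legal gap cells, the insets align, sign is preserved) are exactly the checks the paper makes.
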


\begin{proof}
By Theorem~\ref{garsia milne} and what we have already established, it suffices to find a sign-preserving bijection $f:S\rightarrow S'$. We construct $f$ as follows.

For clarity of notation, let $B$ be placed in $\Delta_{N,m}$ and $B'$ be placed in a copy $\Delta'_{N,m}$ of $\Delta_{N,m}$.  Notice that the $k$th element of the root vector of $B$, $km-b_k$,  is the number of cells in the $k$th column of $\Delta_{N,m}$ which lie outside of board $B$. Since $B$ and $B'$ are $m$-level rook equivalent, the root vector for $B'$ is a rearrangement of the root vector for $B$. Therefore there is a length-preserving bijection between the columns of the set difference $\Delta_{N,m}\setminus B$  and the columns of $\Delta'_{N,m} \setminus B'$ which takes the leftmost column of a given length in $\Delta_{N,m}\setminus B$ to the leftmost column with that length in $\Delta'_{N,m}\setminus B'$ and so forth.
This bijection induces a bijection on the placement of the white rooks. If a white rook appears in the $j$th cell above $B$, place a white rook in the $j$th cell above $B'$ in the associated column.

Once all the white rooks are placed, create a copy of $\Delta'_{N-i,m}$ inside of $\Delta'_{N,m}$ using the columns which do not contain a white rook. Place the black rooks on the board in relation to the $\Delta'_{N-i,m}$ subboard exactly as they are placed on the original board in relation to the original $\Delta_{N-i,m}$ subboard.
Each placement on the  bottom of Figure~\ref{placement 1} is the image under $f$ of the corresponding placement on the top where $B=(0,0,2,3)$ and $B'=(0,0,1,4)$. Notice that in the top left board, the white rook is at the top of the second column from the left which has two cells above $B$. In the board on the bottom left the white rook is still at the top of the second column from the left which has two cells above $B'$.

Under this map the white rooks must be placed inside $\Delta'_{N,m}$ but outside $B'$, and the black rooks are placed inside $\Delta'_{N-i,m}$, so $f$ maps $S$ to $S'$. Further this map preserves the number of white rooks placed on the board, so it is sign preserving. Therefore we may conclude from the Involution Principle that there is an explicit bijection between $m$-level rook placements of $k$ rooks on $B$ and $m$-level rook placements of $k$ rooks on $B'$.
\end{proof}

\begin{figure} \begin{center}
\begin{tikzpicture}[scale=.5]
\foreach \x in {0,1,2,3,4,5}
	\draw (\x,\x)--(6,\x);
\foreach \y in {1,2,3,4,5}
	\draw (\y,0)--(\y,\y);
\draw (6,0)--(6,5);
\draw (3.5,.5) node{$R$};
\draw (2.5,1.5) node{$R$};
\draw (4.5,2.5) node{$R$};
\draw (.5,-.5) node{$1$};
\draw (1.5,-.5) node{$2$};
\draw (2.5,-.5) node{$3$};
\draw (3.5,-.5) node{$4$};
\draw (4.5,-.5) node{$5$};
\draw (5.5,-.5) node{$6$};
\draw (6.5,.5) node{$1$};
\draw (6.5,1.5) node{$2$};
\draw (6.5,2.5) node{$3$};
\draw (6.5,3.5) node{$4$};
\draw (6.5,4.5) node{$5$};
\end{tikzpicture}
\capt{\label{Stirling} The placement corresponding to partition $\{1,4\},\{2,3,5\},\{6\}$.}
\efi

To obtain a consequence of this construction, we will need some background on symmetric functions and Stirling numbers. For $d \leq n$ both non-negative integers, let $e_d(x_1,x_2,\dots,x_n)$ denote the \emph{elementary symmetric function of degree $d$ in $n$ variables}, that is, 
\begin{equation} e_d(x_1,x_2,\dots,x_n) = \sum_{1\leq i_1<i_2<\dots<i_d\leq n}x_{i_1}x_{i_2}\dots x_{i_d}. \end{equation}
Let $S(n,d)$ denote a \emph{Stirling number of the second kind}. Recall that $S(n,d)$ can be defined as the number of ways to partition a set of $n$ elements into $d$ subsets called\emph{ blocks}.

Further, note that $S(n,d)$ counts the number of rook placements of $n-d$ rooks on $\Delta_{n,1}$. To see this, number the rows of $\Delta_{n,1}$ from 1 to $n-1$ from bottom to top. Then number the columns, including the column of height zero, from 1 to $n$ left to right. Given a partition of $\{1,\dots,n\}$ into $d$ blocks, order the elements of each block increasingly.  Now, if $i$ and $j$ are adjacent within a block then place a rook in row $i$ column $j$. See Figure~\ref{Stirling} for the rook placement corresponding to $\{1,4\},\{2,3,5\},\{6\}$. Thus the number of $m$-level rook placements of $n-d$ rooks on $\Delta_{n,m}$ is $m^{n-d}S(n,d)$. The extra $m^{n-d}$ counts the number of ways of choosing a placement for each of the $n-d$ rooks in the $m$ cells of a level.

It is interesting to note that the  construction  of $I$ yields the following theorem giving an explicit calculation for the $m$-level rook numbers of a singleton Ferrers board $B$.

\begin{thm}
For any singleton board  $B=(b_0,b_1,\dots,b_{N-1})$ 
fitting inside $\Delta_{N,m}$,
$$
r_{k,m}(B)=\sum_{i=0}^k(-1)^im^{k-i}S(N-i,N-k)e_i(-b_0,m-b_1,\dots,(N-1)m-b_{N-1}).
$$
\end{thm}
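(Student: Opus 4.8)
The plan is to read the stated formula as a single signed enumeration of the set $S$ built just before the theorem, and to evaluate that enumeration in two different ways. The involution $I$ on $S$ is sign-reversing, and its fixed-point set $T$ is exactly the collection of $m$-level rook placements of $k$ rooks on $B$, each carrying positive sign. Consequently every non-fixed configuration of $S$ cancels against its $I$-partner of opposite sign, so the signed sum $\sum_{C\in S}\operatorname{sgn}(C)$ collapses to $|T|=r_{k,m}(B)$. This reading produces the left-hand side and is immediate from the properties of $I$ already verified in the construction; one does not need the full Garsia-Milne bijection $f$ here, only the elementary cancellation lemma for sign-reversing involutions.

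Next I would compute the same signed sum directly, grouping the configurations by the number $i$ of white rooks they contain, noting that such a configuration has sign $(-1)^i$. For fixed $i$ the configuration splits into two independent choices. First, the $i$ white rooks occupy $i$ distinct columns, one rook per column, in cells outside $B$; since column $j$ of $\Delta_{N,m}$ meets $\Delta_{N,m}\setminus B$ in exactly $jm-b_j$ cells (the $j$th entry of the root vector), the number of white-rook configurations is $\sum_{0\le j_1<\cdots<j_i\le N-1}\prod_{t=1}^{i}(j_tm-b_{j_t})$, which is precisely $e_i(-b_0,m-b_1,\dots,(N-1)m-b_{N-1})$. Second, the remaining $N-i$ columns carry a copy of the inset $\Delta_{N-i,m}$, on which the $k-i$ black rooks form an $m$-level placement; invoking the count recalled earlier in the section (the number of $m$-level placements of $n-d$ rooks on $\Delta_{n,m}$ is $m^{n-d}S(n,d)$) with $n=N-i$ and $n-d=k-i$, hence $d=N-k$, there are $m^{k-i}S(N-i,N-k)$ such placements. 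Multiplying these two independent counts and summing over $0\le i\le k$ gives $\sum_{i=0}^{k}(-1)^i m^{k-i}S(N-i,N-k)\,e_i(-b_0,\dots,(N-1)m-b_{N-1})$. Equating the two evaluations of $\sum_{C\in S}\operatorname{sgn}(C)$ yields the desired identity.

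The step I expect to be the real obstacle is justifying that the black-rook count factors out cleanly, that is, that after deleting the $i$ white columns the surviving $N-i$ columns always carve out a board isomorphic to $\Delta_{N-i,m}$, so that $m^{k-i}S(N-i,N-k)$ is genuinely independent of where the white rooks were placed. I would make this explicit: if the surviving columns sit at positions $c_1<\cdots<c_{N-i}$ in $\Delta_{N,m}$, then $c_t\ge t-1$ forces their heights $c_tm$ to dominate the staircase heights $(t-1)m$, so the bottom cells of these columns support the triangular board $\Delta_{N-i,m}$ regardless of the choice. A secondary point to confirm is that the white and black placements do not interact at all: white rooks live only in the deleted columns and obey no level constraint, while black rooks live only in the inset, so the two placements are independent and the product count is legitimate. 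Once these two independence facts are in place, the rest of the argument is a bookkeeping computation.
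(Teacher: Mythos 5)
Your proposal is correct and follows essentially the same route as the paper: both arguments identify the fixed points of the sign-reversing involution $I$ with the $m$-level rook placements of $k$ rooks on $B$, and then evaluate the signed sum over $S$ directly by counting the white-rook choices via $e_i(-b_0,m-b_1,\dots,(N-1)m-b_{N-1})$ and the black-rook placements on the inset board via $m^{k-i}S(N-i,N-k)$. The only difference is that you spell out two independence points (that the surviving columns always support a copy of $\Delta_{N-i,m}$, and that the white and black placements factor), which the paper leaves implicit in its construction of $S$.
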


\begin{proof}
Since the fixed points of the involution $I$ are counted by $r_{k,m}(B)$, it suffices to show that the sum counts all elements of the set $S$ by sign. First note that the number of ways of putting $i$ white rooks in $i$ different columns of $\Delta_{N,m}$ outside of $B$ is $e_i(-b_0,m-b_1,\dots,(N-1)m-b_{N-1})$. Furthermore the number of $m$-level rook placements of $k-i$ rooks on $\Delta_{N-i,m}$ is $m^{k-i}S(N-i,N-k)$. Putting these two counts together with the appropriate sign gives the sum as desired.
\end{proof}

Note that this theorem implies the previously noted result that if two boards have the same root vector then they are $m$-level rook equivalent.

\subsection{Preservation of $\inv_m$}
\label{subsec:GM-maps}

\bfi
\begin{tikzpicture}[scale=.5]
\fill[lightgray] (0,0) rectangle (1,2);
\fill[lightgray] (2,0) rectangle (3,4);
\fill[lightgray] (3,0) rectangle (4,6);
\foreach \y in {1,2,3,4,5,6}
	\draw (\y-1,2*\y)--(6,2*\y);
\foreach \y in {1,2,3,4,5,6}
	\draw (\y-1,2*\y-1)--(6,2*\y-1);
\draw (0,0)--(6,0);
\foreach \x in {0,1,2,3,4,5}
	\draw (\x,0)--(\x,2*\x+2);
\draw (6,0)--(6,12);
\draw[very thick] (3,0)--(3,3);
\draw[very thick] (3,3)--(4,3);
\draw[very thick] (4,3)--(4,5);
\draw[very thick] (4,5)--(5,5);
\draw[very thick] (5,5)--(5,6);
\draw[very thick] (5,6)--(6,6);
\draw[very thick] (6,6)--(6,0);
\draw[very thick] (6,0)--(0,0);
\draw (1.5,1.5) node{$W_0$};
\draw (4.5,6.5) node{$W$};
\draw (5.5,7.5) node{$W$};
\draw (2.5,2.5) node{$R$};
\draw (3.5,.5) node{$R$};
\draw (.5,.5) node{$*$};
\draw (.5,1.5) node{$*$};
\draw (1.5,2.5) node{$*$};
\draw (1.5,3.5) node{$*$};
\draw (2.5,3.5) node{$*$};
\draw (3.5,1.5) node{$*$};
\draw (3.5,4.5) node{$*$};
\draw (3.5,5.5) node{$*$};
\draw (4.5,7.5) node{$*$};
\draw (4.5,8.5) node{$*$};
\draw (4.5,9.5) node{$*$};
\draw (5.5,8.5) node{$*$};
\draw (5.5,9.5) node{$*$};
\draw (5.5,10.5) node{$*$};
\draw (5.5,11.5) node{$*$};
\end{tikzpicture}
\hs{50pt}
\begin{tikzpicture}[scale=.5]
\foreach \x in {0,...,3}
	\fill[lightgray] (\x,0) rectangle (\x+1,2*\x+2);
\foreach \y in {1,2,3,4,5,6}
	\draw (\y-1,2*\y)--(6,2*\y);
\foreach \y in {1,2,...,6}
	\draw (\y-1,2*\y-1)--(6,2*\y-1);
\draw (0,0)--(6,0);
\foreach \x in {0,...,5}
	\draw (\x,0)--(\x,2*\x+2);
\draw (6,0)--(6,12);
\draw[very thick] (3,0)--(3,3);
\draw[very thick] (3,3)--(4,3);
\draw[very thick] (4,3)--(4,5);
\draw[very thick] (4,5)--(5,5);
\draw[very thick] (5,5)--(5,6);
\draw[very thick] (5,6)--(6,6);
\draw[very thick] (6,6)--(6,0);
\draw[very thick] (6,0)--(0,0);
\draw (1.5,1.5) node{$R$};
\draw (4.5,6.5) node{$W$};
\draw (5.5,7.5) node{$W$};
\draw (2.5,4.5) node{$R$};
\draw (3.5,2.5) node{$R$};
\draw (.5,1.5) node{$*$};
\draw (.5,.5) node{$*$};
\draw (1.5,2.5) node{$*$};
\draw (1.5,3.5) node{$*$};
\draw (2.5,5.5) node{$*$};
\draw (3.5,3.5) node{$*$};
\draw (3.5,6.5) node{$*$};
\draw (3.5,7.5) node{$*$};
\draw (4.5,7.5) node{$*$};
\draw (4.5,8.5) node{$*$};
\draw (4.5,9.5) node{$*$};
\draw (5.5,8.5) node{$*$};
\draw (5.5,9.5) node{$*$};
\draw (5.5,10.5) node{$*$};
\draw (5.5,11.5) node{$*$};
\end{tikzpicture}
	\capt{\label{q weight GM} On the left, an element of $S$ with augmented $2$-inversion number $15$ and sign $-1$. On the right is the image of the left placement under $I$, which still has augmented $2$-inversion number $15$, but has sign $+1$.
}
\efi

We finish this section by showing that the bijections of the previous  subsection
preserve the $m$-inversion numbers of $m$-level rook placements. Given a singleton
board $B$, consider a configuration $C$ in the set $S$
consisting of $i$ white rooks in different columns
of $\Delta_{N,m}\setminus B$, together with an $m$-level placement $\pi$
of $k-i$ black rooks on the inset board $\Delta_{N-i,m}$.
Let the augmented $m$-inversion number of $C$,
denoted $\ainv_m(C)$, 
be the $m$-inversion number of $\pi$, as in Section~\ref{sec:FS}, calculated relative
to the inset board $\Delta_{N-i,m}$, plus the number of cells which lie in 
a column above a white rook. For example, the $2$-level configurations in Figure~\ref{q weight GM} both have augmented $m$-inversion number $15$.

\medskip

\begin{lem}
If $C \in S$, then $\ainv_m(C)=\ainv_m(I(C))$ where $I$ is the map from Subsection~\ref{subsec:GMrook}.
\end{lem}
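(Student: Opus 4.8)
The plan is to compute $\ainv_m$ through the column decomposition $\inv_m(\pi)=\sum_i\vinv_i(\pi)$ of~\eqref{eq:qwt-col}, which is legitimate because every inset board $\Delta_{N-i,m}$ is singleton. Because $I$ is an involution, configurations come in pairs $\{C,I(C)\}$, so it suffices to verify $\ainv_m(C)=\ainv_m(I(C))$ for the member of each pair on which $I$ recolors a black rook as white; the reverse direction is then the same equation read backwards. I would therefore fix the leftmost column $c$ of $C$ containing a rook outside $B$, assume it holds a black rook $R$ in row $r$ and level $p$, and compare $C$ with $I(C)$ column by column.

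Two preliminary observations drive the argument. Since $c$ is the leftmost column with a rook outside $B$, every white rook of $C$ lies strictly to the right of $c$; hence column $c$ sits at inset-position $c$ (its inset height is $cm$), and no black rook is northwest of $R$, because every rook in an earlier column lies inside $B$ and so in a row $\le b_c<r$. Consequently $\vinv_c(C)=cm-r-m\cdot\NW_c'(C)=cm-r$, which is exactly the number of cells of $\Delta_{N,m}$ lying above the newly created white rook in column $c$.

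With this in hand I would split $\Delta_{N,m}$ into three regions. The columns left of $c$ are untouched—no rook there moves and no relevant northwest count changes—so they contribute identically to $\ainv_m(C)$ and $\ainv_m(I(C))$. For column $c$, the identity $\vinv_c(C)=cm-r$ shows that its contribution is merely transferred from the black ($\inv_m$) part of $\ainv_m$ to the white-rook part, with no net change.

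The real work, and the step I expect to be the main obstacle, is the region of columns to the right of $c$, where each inset column loses its top level (height drops by $m$) and the black rooks above level $p$ shift down by $m$. Here I would show $\vinv_j$ is unchanged for every such column $j$ via $\Delta\vinv_j=\Delta h_j'-m\,\Delta\NW_j'$, splitting into cases on the rook in column $j$: (i) if its rook lies above level $p$, it drops one level so $h_j'$ is preserved, and the rooks northwest of it shift down in lockstep, giving $\Delta\NW_j'=0$; (ii) if its rook lies below level $p$ (it cannot lie \emph{at} level $p$, as $R$ already occupies that level), it stays put while the column loses its top level, so $h_j'$ drops by $m$, but recoloring $R$ as white removes exactly one rook from the northwest count, giving $\Delta\NW_j'=-1$; (iii) if column $j$ is empty the same bookkeeping gives $\Delta\NW_j'=-1$. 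In every case $\Delta\vinv_j=0$. The crux is the matching in cases (ii) and (iii): the loss of a full level of cells is compensated exactly by the disappearance of $R$ from the northwest count, which relies on the one-rook-per-level property of $m$-level placements together with $R$ sitting at level $p$, strictly above the stationary rooks. Summing the three regions then yields $\ainv_m(C)=\ainv_m(I(C))$.
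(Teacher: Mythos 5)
Your proposal is correct and is essentially the paper's own proof in mirror image: the paper reduces to the white-to-black recoloring (you chose black-to-white, which is equivalent since $I$ is an involution) and then performs the same column-by-column bookkeeping, matching the $m$ cells an inset column gains or loses against the single rook entering or leaving the relevant northwest count, with your cases (i)--(iii) corresponding exactly to its case analysis and your computation $\vinv_c(C)=cm-r$ playing the role of its observation that the cells above the recolored rook transfer between the black and white parts of $\ainv_m$. The only point you leave tacit---that white rooks other than the recolored one contribute identically because they do not move---is likewise dispatched in one sentence in the paper.
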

\begin{proof}
It suffices to consider the case where $I$ changes the 
leftmost rook outside $B$ from white to black. In this case, let $W_0$ denote the leftmost white rook in $C$.
 All squares in the column 
above $W_0$ contributed to $\ainv_m(C)$ since they were above a white
rook. These squares must still contribute to the augmented $m$-inversion number of $I(C)$ 
because, as  proved earlier, there can be no black rook northwest of these squares. Next consider a column 
containing a black rook $R$ in $C$.
 If $R$ is to the left of $W_0$,
the cells in this column contributing to the augmented $m$-inversion number are the
same in $C$ and $I(C)$. If $R$ is to the right of $W_0$ in a lower level,
the new inset board $\Delta_{N-i+1,m}$ will have $m$ more cells above $R$
in its column. But, $m$ of those cells are located in the same level
as the new black rook where $W_0$ was, so that the
contribution of this column to $\ainv_m(C)$ is the same as to
$\ainv_m(I(C))$. If $R$ is to the right of $W_0$ at the same or
higher level, $R$ will move up $m$ cells, but the new inset board
$\Delta_{N-i+1,m}$ will also have $m$ new cells in this column.
 A similar analysis shows that
a column of the inset board containing no rook makes the same contribution
to the augmented $m$-inversion number in $C$ and $I(C)$. Finally, any column which does not intersect the inset board $\Delta_{N-i+1,m}$ contributes the same amount to $\ainv_m(C)$ and $\ainv_m(I(C))$ because none of the other white rooks have changed location.
\end{proof}

\bfi
\begin{tikzpicture}[scale=.5]
\foreach \y in {0,1,2,3,4,5,6}
    \draw (0,\y)--(3,\y);
\foreach \y in {0,2,4,6}
	\draw[very thick] (0,\y)--(3,\y);
\foreach \x in {0,1,2,3} 
   \draw (\x,0)--(\x,6);
\draw (2.5,2.5) node{$R$};
\draw (1.5, 1.5) node{$R$};
\draw (.5,4.5) node{$R$};
\end{tikzpicture}
\capt{\label{wreath} The placement on $\sqr_{3,2}$ corresponding to $(\alpha^1,\alpha^2,\alpha^1;(1,3,2))$.}
\efi

Next we show that the sign-preserving bijection $f:S\rightarrow S'$
from the proof of Theorem~\ref{rook numbers} preserves the augmented $m$-inversion number. 

\begin{lem}
If $C \in S$ and $f(C) \in S'$, then $\ainv_m(C)=\ainv_m(f(C))$ where $f$ is the map from Subsection~\ref{subsec:GMrook}. 
\end{lem}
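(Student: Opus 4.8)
The plan is to exploit the fact that $\ainv_m$ is, by its very definition, a sum of two independent contributions, and to check that $f$ preserves each one separately. Write $C$ as $i$ white rooks in distinct columns of $\Delta_{N,m}\setminus B$ together with an $m$-level placement $\pi$ of $k-i$ black rooks on the inset board $\Delta_{N-i,m}$, so that $\ainv_m(C)=\inv_m(\pi)+W(C)$, where $\inv_m(\pi)$ is computed relative to the inset $\Delta_{N-i,m}$ and $W(C)$ denotes the number of cells lying above a white rook. Since $f$ preserves the number $i$ of white rooks (and hence the sign), the configurations $C$ and $f(C)$ have inset boards of the same shape $\Delta_{N-i,m}$, and it suffices to establish $\inv_m(\pi)=\inv_m(f(\pi))$ and $W(C)=W(f(C))$.

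For the black-rook contribution, I would invoke the description of $f$ from the proof of Theorem~\ref{rook numbers}: once the white rooks are placed, $f$ puts the black rooks on the inset $\Delta'_{N-i,m}$ in relation to that subboard exactly as they sit on the original inset $\Delta_{N-i,m}$. Thus $f$ restricts to a cell-for-cell, shape-preserving identification of the two copies of the abstract triangular board $\Delta_{N-i,m}$ carrying $\pi$ to $f(\pi)$. Because the definition of $\inv_m$ refers only to the board on which the placement lives — a non-rook cell is counted precisely when it has no rook above it in its column and no rook to its left in its level — the statistic is intrinsic to the abstract inset board and is insensitive to which columns of the ambient triangle the inset occupies. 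Hence $\inv_m(\pi)=\inv_m(f(\pi))$.

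For the white-rook contribution, I would recall that $f$ is built from the length-preserving bijection between the columns of $\Delta_{N,m}\setminus B$ and those of $\Delta'_{N,m}\setminus B'$ furnished by the equality of root vectors, and that it sends a white rook in the $j$th cell above $B$ to the $j$th cell above $B'$ in the associated column. If the source column has $\ell$ cells above $B$, the target column has $\ell$ cells above $B'$, so in both configurations exactly $\ell-j$ cells lie strictly above that white rook. Summing over all $i$ white rooks gives $W(C)=W(f(C))$, and combining the two paragraphs yields $\ainv_m(C)=\ainv_m(f(C))$.

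The routine parts here really are routine; the single point deserving care — which I regard as the crux — is the claim in the second paragraph that the $\inv_m$ of the black placement can be read off from the abstract inset board alone. This rests on the fact that the inset $\Delta_{N-i,m}$ is, by construction, a triangular board whose cells are re-indexed independently of the positions of the white-rook columns, so that ``above in the same column'' and ``to the left in the same level'' have the same meaning in $\Delta_{N-i,m}$ and in $\Delta'_{N-i,m}$. Once this is granted, preservation of $\inv_m(\pi)$ follows immediately from the cell-for-cell correspondence defining $f$.
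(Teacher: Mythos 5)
Your proof is correct and follows essentially the same route as the paper's: both decompose $\ainv_m$ into the black-rook contribution (preserved because $f$ copies the placement identically onto the inset $\Delta'_{N-i,m}$, and $\inv_m$ is intrinsic to that board) and the white-rook contribution (preserved because the length-preserving column bijection sends the $j$th cell above $B$ to the $j$th cell above $B'$). Your write-up merely makes explicit the cell-counting detail that the paper leaves implicit.
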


\begin{proof}
Since $f$ sends
the placement of black rooks on the inset board $\Delta_{N-i,m}$ to
the identical placement of black rooks on the inset board $\Delta'_{N-i,m}$,
the contribution to the augmented $m$-inversion number from the black rooks is the same
in $C$ and $f(C)$. By the way $f$ moves the white rooks, the total
number of cells above the white rooks in $C$ and $f(C)$ also agrees.
Thus $\ainv_m(C)=\ainv_m(f(C))$, as desired.
\end{proof}

\begin{thm}
The explicit bijection produced by 	Theorem~\ref{rook numbers} preserves the $m$-inversion number of the board.
\end{thm}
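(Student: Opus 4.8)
The plan is to reduce everything to the augmented $m$-inversion number $\ainv_m$, and then to pin down exactly how $\ainv_m$ degenerates on the fixed-point sets $T$ and $T'$. First I would observe that the explicit Garsia--Milne bijection $\Phi\colon T\to T'$ of Theorem~\ref{rook numbers} is, by the recipe recalled after Theorem~\ref{garsia milne}, obtained from a starting point $t\in T\subseteq S$ by applying $f$ and then iterating $f\circ I\circ f^{-1}\circ I'$ until an element of $T'$ is reached. The two preceding lemmas show that $f$ (hence $f^{-1}$) preserves $\ainv_m$, that $I$ preserves $\ainv_m$ on $S$, and, by the symmetric version of the first lemma, that $I'$ preserves $\ainv_m$ on $S'$. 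Consequently every map in the iteration fixes $\ainv_m$, so $\ainv_m(\Phi(t))=\ainv_m(t)$ for all $t\in T$.

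It then remains to convert this $\ainv_m$-identity into the desired identity $\inv_m(t)=\inv_m(\Phi(t))$ for the genuine placements on $B$ and $B'$. For a fixed point $t\in T$ there are no white rooks, so $i=0$ and the inset board is all of $\Delta_{N,m}$; thus $\ainv_m(t)$ is simply $\inv_m$ of the placement $t$ computed on the whole triangle $\Delta_{N,m}$ rather than on $B$. I would compare these two inversion counts cell by cell. A cell of $B$ satisfies the three defining conditions of $\inv_m$ relative to $\Delta_{N,m}$ exactly when it does relative to $B$, since all rooks lie in $B$ and the relevant rows, columns, and levels are unchanged. The crux is the cells of $\Delta_{N,m}\setminus B$: I claim each of them always contributes. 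Such a cell $c$ lies above the top of its column in $B$, so it contains no rook and has no rook above it in its column; the only remaining condition is that no rook lie to its left in its level $p$. Here the singleton hypothesis does the work: if column $i$ of $B$ already reaches level $p$ then, since $c$ sits above $B$, column $i$ has strictly fewer than $m$ cells in level $p$ and hence is the unique partial column of that level, which by the structure exhibited in the proof of Lemma~\ref{singletonexist} is the leftmost column meeting level $p$; and if column $i$ of $B$ does not reach level $p$, then no shorter column to its left does either. In both cases no rook can sit to the left of $c$ in level $p$, so $c$ contributes. I expect this cell-above-$B$ analysis to be the main obstacle, since it is the only place where the singleton structure is genuinely used, and it is what makes the correction term independent of the placement.

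Granting the claim, every cell of $\Delta_{N,m}\setminus B$ contributes, so $\ainv_m(t)=\inv_m(t)+\#(\Delta_{N,m}\setminus B)$, where the constant $\#(\Delta_{N,m}\setminus B)=\sum_{i}(im-b_i)$ is exactly the sum of the entries of the root vector $\zeta_m$ of $B$. Since $B$ and $B'$ are $m$-level rook equivalent, their root vectors are rearrangements of one another, so these two correction constants agree. Combining this with $\ainv_m(t)=\ainv_m(\Phi(t))$ and the analogous identity for $B'$ cancels the correction terms and yields $\inv_m(t)=\inv_m(\Phi(t))$, which completes the proof.
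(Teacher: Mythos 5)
Your proof is correct and takes essentially the same route as the paper: propagate the augmented $m$-inversion number $\ainv_m$ through the Garsia--Milne iteration using the two preceding lemmas (plus the symmetric statement for $I'$), then convert to $\inv_m$ on the fixed-point sets via the constant correction $\#(\Delta_{N,m}\setminus B)$. You actually supply more detail than the paper at the one delicate point---the paper merely asserts that every cell of $\Delta_{N,m}\setminus B$ contributes, whereas you justify it with the leftmost-partial-column consequence of the singleton hypothesis---and your root-vector argument that the two correction constants agree is an equally valid alternative to the paper's observation that $\#B=\#B'$.
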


\begin{proof}
From the previous two lemmas, we see that the Garsia-Milne Involution Principle provides a bijection $g$ between
the fixed point sets $T$ and $T'$, which preserves the augmented $m$-inversion number. Recall that a configuration $C\in T$ or $C'=g(C) \in T'$ has
no white rooks, and all black rooks are on the board $B$, not merely
on the larger board $\Delta_{N,m}$ which is the inset board when there are no white rooks. Let $\pi$ and $\pi'$ be the placments on $B$ and $B'$, respectively. Note that $\ainv_m(C)$ is computed
relative to the board $\Delta_{N,m}$, whereas $\inv_m(\pi)$
 is computed
relative to the smaller board $B$. 
 But since $B$ is a singleton Ferrers 
board located in the southeast corner of $\Delta_{N,m}$, 
every square in $\Delta_{N,m}\setminus B$ will contribute to $\ainv_m(C)$. 
Since $\#B=\#B'$, we conclude that
\[ \inv_m(\pi)=\ainv_m(C)-\#(\Delta_{N,m}\setminus B)
         =\ainv_m(C')-\#(\Delta_{N,m}\setminus B')
         =\inv_m(\pi') \]
for all $C\in T$. This shows that the bijection $g$ preserves the $m$-inversion number
of $m$-level rook placements computed relative to the boards $B$ and $B'$.
\end{proof}

\section{A bijection for hit numbers}
\label{sec:LR hit number}

We will now use the Involution Principle to prove that two boards that are $m$-level rook equivalent have the same hit numbers. We begin with some definitions. As usual, let $m$ be a fixed positive integer.

Let $B$ be a Ferrers board and let the integer $N$ be sufficiently large so that $B$ fits inside a rectangular board $\sqr_{N,m}$ with $N$ columns and $mN$ rows. If $\alpha$ is a generator of the cyclic group $C_m$,  then  
$$
C_m \wr S_N = \{(\alpha^{s_1},\alpha^{s_2},\dots,\alpha^{s_N};\sigma) \mid 
\text{$1 \leq s_i \leq m$ for each $i$ and $\sigma \in S_N$}\}.
$$ 
We associate with $\omega\in C_m \wr S_N$ a placement on $\sqr_{N,m}$ by placing a rook in level $N+1-p$ and column $i$ if $\sigma(i)=p$.  Furthermore, the rook in column $i$ will be $j$ cells from the bottom of the level if $s_i = j $. See Figure~\ref{wreath} for an example with $m=2$ and $N=3$, where the placement corresponds to $(\alpha^1,\alpha^2,\alpha^1;(1,3,2))$, and $\sigma$ is in one line notation.
 Let $R(\omega)$ denote the rook placement corresponding to $\omega$. Define the \emph{$k$th hit set} of $B$ to be
\begin{equation}
H^{(m)}_{k,N}(B)=\{R(\omega) \ | \ \omega \in C_m \wr S_N \text{ and } \#(R(\omega) \cap B) = k\}.
\end{equation}
Also define the \emph{$k$th hit number} of $B$ to be
\begin{equation}
h^{(m)}_{k,N} = \# H^{(m)}_{k,N}.
\end{equation}

In order to show that two $m$-level rook equivalent Ferrers boards have the same hit numbers, we use Garsia and Milne's result again. To do so, we must construct a signed set and a sign-reversing involution which has a set counted by $h^{(m)}_{k,N}$ as its fixed set. We do this as follows.

Let $N$ be large enough that $B$ fits inside $\sqr_{N,m}$ and fix a non-negative integer $k$ with $ k\leq N$. Then the set $S$ will consist of all configurations $C$ constructed as follows. Let $i$ vary over all non-negative integers such that $k+i \leq N$. Place $k+i$ non-attacking black, $m$-level rooks $R$ on the board $B$ if possible. If this is not possible then there are no elements of $S$ corresponding to this choice of $k$ and $i$. Furthermore, circle $i$ of the rooks in the placement. Finally, consider the $N-k-i$ columns and $N-k-i$ levels which do not contain a black rook as a subboard 
of shape $\sqr_{N-k-i,m}$.  As in the previous section, we will call this the \emph{inset $\sqr_{N-k-i,m}$ board}. Place $N-k-i$ non-attacking white $m$-level rooks, denoted by $W$, on the inset $\sqr_{N-k-i,m}$. Notice that, ignoring the color of the rooks, this is an $m$-level rook placement of $N$ rooks on $\sqr_{N,m}$. Thus it corresponds to some element of $C_m \wr S_N$. Let the sign of a configuration be $(-1)^i$.
See  Figure~\ref{hit placement} for two examples of such  configurations. Here $m=2$ and $B=(1,2,4)$ is placed fitting in $\sqr_{3,2}$.  On the left, there are no circled black rooks so $i=0$ and the white rooks are placed on the shaded inset $\sqr_{2,2}$.  On the right there is one circled black rook so $i=1$ and the white rooks are on a shaded inset $\sqr_{1,2}$.

In order to produce a sign-reversing involution $I$ on such configurations $C$, we do the following. If $B$ contains neither a white rook  nor a circled black rook, then $C$ is fixed by $I$.  Otherwise, examine the columns of $B$ from left to right until the first white rook  or circled black rook  is found. If the first rook found is white, exchange it for a circled black rook and increase $i$ by $1$. If the first rook found is a circled black rook, exchange it for a white rook and decrease $i$ by $1$.  
See  Figure~\ref{hit placement} for two examples of such configurations with $k=1$.
It is easy to see that $I$ is an involution and reverses signs in its $2$-cycles.  Also note that fixed points have no circled black rooks, so $i=0$ and the sign of the configuration is $+1$. Furthermore, for a fixed point there are no white rooks placed on $B$, so the $m$-level placement intersects $B$ in exactly $k$ black rooks. Thus the fixed points are exactly the elements of $H^{(m)}_{k,N}(B)$ if one just ignores the colors of the rooks.

\begin{figure}\begin{center}
\begin{tikzpicture}[scale=.5]
\fill[lightgray] (0,0) rectangle (2,2);
\fill[lightgray] (0,4) rectangle (2,6);
\draw (0,0)--(3,0);
\foreach \y in {0,1,2,3,4,5,6}
    \draw (0,\y)--(3,\y);
\foreach \x in {0,1,2,3} 
   \draw (\x,0)--(\x,6);
\draw[very thick] (0,0)--(3,0);
\draw[very thick] (1,2)--(2,2);
\draw[very thick] (2,4)--(3,4);
\draw[very thick] (0,0)--(0,1);
\draw[very thick] (1,1)--(1,2);
\draw[very thick] (0,1)--(1,1);
\draw[very thick] (2,2)--(2,4);
\draw[very thick] (3,0)--(3,4);
\draw (2.5,2.5) node{$R$};
\draw (1.5, 1.5) node{$W$};
\draw (.5,4.5) node{$W$};
\draw (-1,3) node{$s=$};
\end{tikzpicture}
\hs{50pt}
\begin{tikzpicture}[scale=.5]
\fill[lightgray] (0,4) rectangle (1,6);
\draw (0,0)--(3,0);
\foreach \y in {0,1,2,3,4,5,6}
    \draw (0,\y)--(3,\y);
\foreach \x in {0,1,2,3} 
   \draw (\x,0)--(\x,6);
\draw[very thick] (0,0)--(3,0);
\draw[very thick] (1,2)--(2,2);
\draw[very thick] (2,4)--(3,4);
\draw[very thick] (0,0)--(0,1);
\draw[very thick] (1,1)--(1,2);
\draw[very thick] (0,1)--(1,1);
\draw[very thick] (2,2)--(2,4);
\draw[very thick] (3,0)--(3,4);
\draw (2.5,2.5) node{$R$};
\draw (1.5,1.5) node{$R$};
\draw (.5,4.5) node{$W$};
\draw (-1,3) node{$I(s)=$};
\draw (1.5,1.5) circle (10pt);
\end{tikzpicture}
\capt{\label{hit placement} On the left, an element in $S$ with sign $+1$. On the right, the image under $I$ which has sign $-1$.}
\efi

The reader will find an example illustrating the next proof in Figure~\ref{hit place bijection}.  This example uses the boards from the example of Theorem~\ref{bijections} found in Figure~\ref{full bijection}.

\begin{thm}
\label{hit numbers}
Let $B$ and $B'$ be two $m$-level rook equivalent Ferrers boards and $N$ be large enough that $B$ and $B'$ both fit inside $\sqr_{N,m}$. Then for any non-negative integer $k \leq N$, there is an explicit bijection between $H^{(m)}_{k,N}(B)$ and $H^{(m)}_{k,N}(B')$.
\end{thm}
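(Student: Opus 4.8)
The plan is to invoke the Garsia-Milne Involution Principle (Theorem~\ref{garsia milne}) with the triple $(S,T,I)$ just constructed for $B$ and the analogous triple $(S',T',I')$ for $B'$. We have already checked that $I$ is a sign-reversing involution whose fixed point set $T$ equals $H^{(m)}_{k,N}(B)$, that fixed points have sign $+1$ (so $T\subseteq S^+$), and the same holds for $B'$. Thus the only thing left to supply is an explicit sign-preserving bijection $f\colon S\to S'$; the Involution Principle then manufactures the desired bijection between $T=H^{(m)}_{k,N}(B)$ and $T'=H^{(m)}_{k,N}(B')$ automatically. Crucially, $f$ need not commute with $I$ and $I'$, which keeps the construction flexible.

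To build $f$, first observe that every configuration $C\in S$ decomposes canonically into four pieces: the index $i$; the $m$-level placement $P$ of $k+i$ black rooks on $B$; the choice of which $i$ of these rooks are circled; and the full $m$-level placement $\pi_W$ of white rooks on the inset board $\sqr_{N-k-i,m}$. I would map each piece separately while preserving $i$ (and hence the sign $(-1)^i$). Since $B$ and $B'$ are $m$-level rook equivalent we have $r_{k+i,m}(B)=r_{k+i,m}(B')$ for every $i$, and Theorem~\ref{bijections} furnishes an explicit bijection $\phi$ sending $P$ to an $m$-level placement $P'$ of $k+i$ black rooks on $B'$. Because $\phi$ is assembled from the level- and column-numbering maps of Lemmas~\ref{singleton},~\ref{l operator}, and~\ref{local l}, it tracks individual rooks, so it induces a correspondence between the $k+i$ black rooks of $P$ and those of $P'$; I would use this correspondence to transport the set of circled rooks to $P'$.

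It remains to relocate the white rooks. After $P'$ is placed on $B'$, the $N-k-i$ columns and $N-k-i$ levels of $\sqr_{N,m}$ containing no black rook again form an inset board of shape $\sqr_{N-k-i,m}$. I would identify this new inset board with the original one via the left-to-right order on their columns and the bottom-to-top order on their levels, and place the white rooks in the cells matching $\pi_W$ under this identification. Since both inset boards have the identical shape $\sqr_{N-k-i,m}$, this yields a genuine full $m$-level placement of white rooks, so the resulting configuration $f(C)$ lies in $S'$. The map $f$ preserves $i$, hence preserves the sign, and it is invertible: running $\phi^{-1}$ on the black rooks and reversing the inset-board identification recovers $C$.

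The steps that will need the most care are verifying that $f$ genuinely lands in $S'$ --- that the transported circles attach to legitimate black rooks and that the white rooks neither attack one another nor collide with the new black placement --- together with confirming that $\phi$ tracks rooks well enough for circle-transport to be well defined. Both of these reduce to the fact that $\phi$ is a composition of explicit rook-tracking bijections and that the inset board is the same rectangle in $S$ and $S'$; once these are in hand, sign-preservation and invertibility are immediate, and Theorem~\ref{garsia milne} finishes the proof.
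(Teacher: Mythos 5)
Your proposal is correct and follows the paper's proof essentially step for step: invoke Garsia--Milne with the same signed sets and involutions, transport the black rooks via the explicit bijection of Theorem~\ref{bijections}, carry the circles along, and copy the white-rook arrangement onto the new inset $\sqr_{N-k-i,m}$ board. The only (cosmetic) difference is the circle transport: the paper circles the $r$th black rook from the right on $B'$ exactly when the $r$th from the right on $B$ is circled, a purely positional rule that sidesteps your need to argue that the composed bijection of Theorem~\ref{bijections} tracks individual rooks (though that claim is true, as each constituent bijection moves rooks individually).
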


\begin{proof}
As in the proof of Theorem \ref{rook numbers}, we use the Garsia-Milne Involution Principle. 
Construct $S$ for $B$ placed inside $\sqr_{N,m}$ and $S'$ for $B'$ placed inside $\sqr'_{N,m}$. From what we have already done, all that remains is to construct the sign-preserving bijection $f:S\rightarrow S'$.

\begin{figure} \begin{center}
\begin{tikzpicture}[scale=.5]
\fill[lightgray] (1,6) rectangle (3,10);
\foreach \y in {0,1,2,3,4,5,6,7,8,9,10}
	\draw (0,\y)--(5,\y);
\foreach \x in {0,1,2,3,4,5}
	\draw (\x,0)--(\x,10);
\draw[very thick] (0,0)--(5,0);
\draw[very thick] (5,0)--(5,7);
\draw[very thick] (4,7)--(5,7);
\draw[very thick] (4,6)--(4,7);
\draw[very thick] (3,6)--(4,6);
\draw[very thick] (3,1)--(3,6);
\draw[very thick] (0,1)--(3,1);
\draw[very thick] (0,0)--(0,1);
\draw (.5,.5) node{$R$};
\draw (4.5,2.5) node{$R$};
\draw (3.5,5.5) node{$R$};
\draw (1.5,6.5) node{$W$};
\draw (2.5, 8.5) node{$W$};
\draw (3.5,5.5) circle (10pt);
\end{tikzpicture}
\hs{50pt}
\begin{tikzpicture}[scale=.5]
\fill[lightgray] (0,8) rectangle (2,10);
\fill[lightgray] (0,4) rectangle (2,6);
\foreach \y in {0,1,2,3,4,5,6,7,8,9,10}
	\draw (0,\y)--(5,\y);
\foreach \x in {0,1,2,3,4,5}
	\draw (\x,0)--(\x,10);
\draw[very thick] (2,0)--(2,3);
\draw[very thick] (2,3)--(3,3);
\draw[very thick] (3,3)--(3,5);
\draw[very thick] (3,5)--(4,5);
\draw[very thick] (4,5)--(4,8);
\draw[very thick] (4,8)--(5,8);
\draw[very thick] (2,0)--(5,0);
\draw[very thick] (5,0)--(5,8);
\draw (2.5,2.5) node{$R$};
\draw (3.5,.5) node{$R$};
\draw (4.5,7.5) node{$R$};
\draw (.5,4.5) node{$W$};
\draw (1.5, 8.5) node{$W$};
\draw (3.5,.5) circle (10pt);
\end{tikzpicture}
\capt{\label{hit place bijection} On the left, a $2$-level placement of white rooks, black rooks, and circled black rooks on $(1,1,1,6,7)$ inside $\sqr_{5,2}$. On the right, the corresponding placement on $(3,5,8)$ under the construction in Theorem~\ref{hit numbers}.
}
\efi

Consider an element $C \in S$. The black rooks, circled and uncircled, form an $m$-level rook placement of $k+i$ rooks on $B$. Map this to an $m$-level rook placement of $k+i$ rooks on $B'$ using the explicit bijection guaranteed by Theorem~\ref{bijections}. Furthermore, add circles to the rooks on $B'$ in such a way so that if the $r$th rook from the right on board $B$ is circled, the $r$th rook from the right on board $B'$ is circled. Finally, place the white rooks on $\sqr'_{N,m}$ by considering the inset $\sqr'_{N-k-i,m}$ of columns and levels containing no black rooks. Place the white rooks on this inset board in the exact same arrangement as they are in on the inset $\sqr_{N-k-i,m}$ of $\sqr_{N,m}$. This is easily seen to be a bijection and so the proof is complete.
\end{proof}

The next corollary follows immediately from the previous theorem.

\begin{cor}
Let $B$ and $B'$ be two $m$-level rook equivalent Ferrers boards and $N$ be large enough such that $B$ and $B'$ both fit inside $\sqr_{N,m}$. Then for any non-negative integer $k \leq N$, $h^{(m)}_{k,N}(B) = h^{(m)}_{k,N}(B')$.
\end{cor}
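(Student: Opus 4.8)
The plan is to deduce the equality of hit numbers directly from the existence of the explicit bijection constructed in Theorem~\ref{hit numbers}. Recall that by definition $h^{(m)}_{k,N}(B) = \# H^{(m)}_{k,N}(B)$ and likewise $h^{(m)}_{k,N}(B') = \# H^{(m)}_{k,N}(B')$, so it suffices to show that the two hit sets have the same cardinality.

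First I would invoke the hypotheses: since $B$ and $B'$ are $m$-level rook equivalent and $N$ is chosen large enough that both boards fit inside $\sqr_{N,m}$, Theorem~\ref{hit numbers} applies and furnishes an explicit bijection $g\colon H^{(m)}_{k,N}(B) \to H^{(m)}_{k,N}(B')$ for every non-negative integer $k \leq N$. A bijection between two finite sets forces them to have the same number of elements, so $\# H^{(m)}_{k,N}(B) = \# H^{(m)}_{k,N}(B')$, which is exactly the desired identity $h^{(m)}_{k,N}(B) = h^{(m)}_{k,N}(B')$.

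There is essentially no obstacle here: all of the genuine difficulty---building the signed sets, the sign-reversing involutions whose fixed points are the hit sets, and the sign-preserving bijection $f\colon S \to S'$ that feeds the Garsia--Milne Involution Principle---was already carried out in the proof of Theorem~\ref{hit numbers}. The only point worth a moment's care is to confirm that the single choice of $N$ in the hypothesis simultaneously accommodates both boards, which is guaranteed by the assumption that ``$N$ be large enough such that $B$ and $B'$ both fit inside $\sqr_{N,m}$''; this is precisely the standing condition under which the bijection of the theorem was produced. Hence the corollary is immediate, and the proof amounts to translating the bijection into the equality of cardinalities.
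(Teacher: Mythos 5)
Your proof is correct and matches the paper's approach exactly: the paper states that this corollary ``follows immediately from the previous theorem,'' and your argument---that the explicit bijection of Theorem~\ref{hit numbers} between the finite sets $H^{(m)}_{k,N}(B)$ and $H^{(m)}_{k,N}(B')$ forces their cardinalities, i.e.\ the hit numbers, to coincide---is precisely that immediate deduction.
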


\section{Other Results and Open Problems}
\label{sec:Open Problem}

\subsection{A Factorization Theorem}
The $l$-operator leads to a second formulation of the factorization theorem for the $m$-level rook polynomial of a Ferrers board, originally found in~\cite{blrs:mrp}.  This theorem generalized Theorem~\ref{briggs remmel} from singleton boards to all Ferrers boards.

\begin{thm}
\label{l fact}
Let $B=(b_1,\dots,b_n)$ be a Ferrers board with $t$ non-empty levels, and let $l(B)=(l_t, l_{t-1}, \dots, l_1)$ be the singleton board where $l_p$ is the number of cells in level $p$ of $B$, as in~\S\ref{subsec:l op}. Then for any $N$ greater than or equal to both $n$ and $t$

$$
\sum_{k=0}^N r_{k,m}(B)x\da_{N-k,m} = \prod_{i=1}^N (x+l_{N-i+1}-(i-1)m)\text{,}
$$
where $l_{N-i+1}=0$ if $N-i+1 > t$.
\end{thm}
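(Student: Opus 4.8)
The plan is to reduce everything to the singleton-board factorization of Briggs and Remmel (Theorem~\ref{briggs remmel}), which already produces exactly the product on the right-hand side; the combinatorial content is entirely carried by the bijections constructed earlier. Concretely, the remark following Lemma~\ref{l operator} gives an explicit bijection between $m$-level rook placements on $B$ and on its $m$-transpose $l(B)=l(B_S)$, so $r_{k,m}(B)=r_{k,m}(l(B))$ for every $k$. Since $l(B)$ is a singleton board, it suffices to prove the stated identity with $r_{k,m}(B)$ replaced by $r_{k,m}(l(B))$ and then invoke Theorem~\ref{briggs remmel} on $l(B)$. Thus no new involution or bijection is required.

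The only real care needed is bookkeeping, since $l(B)=(l_t,l_{t-1},\dots,l_1)$ has just $t$ columns whereas Theorem~\ref{briggs remmel} is phrased for a singleton board whose number of columns equals the upper summation index. First I would pad $l(B)$ on the left with $N-t$ columns of height $0$ (legitimate because $N\ge t$), obtaining $\hat{B}=(\hat{b}_1,\dots,\hat{b}_N)$. A height-$0$ column contains no cell, so it can hold no rook and interferes with no level; hence $r_{k,m}(\hat{B})=r_{k,m}(l(B))$ for all $k$. Moreover such a column terminates in no positive level, so the singleton condition is unaffected and $\hat{B}$ remains a singleton board, which is what Theorem~\ref{briggs remmel} requires. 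Next I would check the index identification $\hat{b}_i=l_{N-i+1}$ under the convention $l_p=0$ for $p>t$: for $N-t<i\le N$ the $i$th column of $\hat{B}$ is the $(i-(N-t))$th column of $l(B)$, namely $l_{t-(i-(N-t))+1}=l_{N-i+1}$, while for $1\le i\le N-t$ the column is a padded $0$ and there $N-i+1>t$, so $l_{N-i+1}=0$ as well.

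With these identifications in place, applying Theorem~\ref{briggs remmel} to $\hat{B}$ yields
$$
\sum_{k=0}^N r_{k,m}(\hat{B})\, x\da_{N-k,m}
 = \prod_{i=1}^N \bigl(x+\hat{b}_i-(i-1)m\bigr)
 = \prod_{i=1}^N \bigl(x+l_{N-i+1}-(i-1)m\bigr),
$$
and since $r_{k,m}(\hat{B})=r_{k,m}(l(B))=r_{k,m}(B)$ the left-hand side is precisely the desired sum, finishing the proof. I do not expect any genuine obstacle: the substantive work lives in the earlier lemmas and in Theorem~\ref{briggs remmel}, and the role of the hypothesis $N\ge\max(n,t)$ is only to guarantee $N\ge t$ (so the padding is valid) and that $N$ is large enough for the summation to capture every nonzero $m$-level rook number of $B$. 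The sole point demanding attention is the re-indexing and the handling of the $l_{N-i+1}=0$ convention, both of which are purely routine.
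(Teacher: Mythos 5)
Your proposal is correct and follows essentially the same route as the paper: both reduce the identity to Theorem~\ref{briggs remmel} applied to the singleton board $l(B)$, using the fact that $r_{k,m}(B)=r_{k,m}(l(B))$ via the bijections of Lemmas~\ref{singleton} and~\ref{l operator}. Your explicit padding of $l(B)$ with $N-t$ zero columns and the index check $\hat{b}_i=l_{N-i+1}$ simply spell out bookkeeping that the paper's terser proof leaves implicit.
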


\begin{proof}
Since $r_{k,m}(B)=r_{k,m}(l(B))$ by Lemma~\ref{l operator}, the choice of $N$ ensures that
$$
\sum_{k=0}^N r_{k,m}(B)x\da_{N-k,m}=\sum_{k=0}^N r_{k,m}(l(B))x\da_{N-k,m}\text{.}
$$
Since the right hand side of the equation is the $m$-level rook polynomial of the singleton board $l(B)$, Theorem~\ref{briggs remmel}  implies
$$
\sum_{k=0}^N r_{k,m}(l(B))x\da_{N-k,m} = \prod_{i=1}^N (x+l_{N-i+1}-(i-1)m)\text{.}
$$
Combining these equations yields the desired theorem.
\end{proof}

\subsection{Open Problems}
The characterization of the rook equivalence class of a singleton board in terms of its root vector in Theorem~\ref{briggs remmel} provides a way to count the number of singleton boards in a given $m$-level rook equivalence class as was done in~\cite{blrs:mrp}. However, it is an open problem to count the total number of Ferrers boards in a given $m$-level rook equivalence class.  If $C$ is a singleton board, then perhaps counting the number of Ferrers boards $B$ with $l(B)=C$ would be a good start to this problem, but this too remains open.

Another open question concerns a $p$-analogue of the $m$-level rook numbers. 
Here, $p$ refers to a variable and not a level.
When the $q$-analogue was introduced above, it was mentioned that Briggs and Remmel assigned to each $m$-level rook placement $\pi$ a monomial in $p$ and $q$, where the power of $q$ turned out to be $\inv_m(\pi)$. The interpretation of the power of $p$ turns out to be less intuitive. Given a placement $\pi$ of $k$ rooks in columns $c_1,c_2,\dots,c_k$, let $\beta(\pi)$ denote the number of cells $c$ satisfying the following conditions:
\begin{enumerate}
\item The cell $c$ is below a cell containing a rook.
\item There is no rook to the left of $c$ in the same level.
\end{enumerate}
Then the power of $p$ associated with placement $\pi$, called the \emph{$p$-weight of $\pi$}, is 
$$
\wt_m(\pi)=\beta(\pi)-m(c_1+c_2+\dots+c_k).
$$ 
For example, the $2$-level rook placement on the right in Figure~\ref{p weight} has $\beta(\pi)=1$ and $c_1=1$
so that  $\wt_2(\pi)=1-2(1)=-1$.

\bfi
\begin{tikzpicture}[scale=.5]
\foreach \x in {0,1,2}
	\draw (\x,0)--(\x,1);
\foreach \y in {0,1}
	\draw (0,\y)--(2,\y);
\draw (.5,.5) node{$R$};
\draw (-1,.5) node{$B=$};
\end{tikzpicture}
\hs{50pt}
\begin{tikzpicture}[scale=.5]
\foreach \x in {0,1}
	\draw (\x,0)--(\x,2);
\foreach \y in {0,1,2}
	\draw (0,\y)--(1,\y);
\draw (.5,1.5) node{$R$};
\draw (-1.2,1) node{$l(B)=$};
\draw (.5,.4) node{*};
\end{tikzpicture}
\capt{\label{p weight} On the left, a $2$-level placement $\pi$ of one rook on Ferrers board $B=(1,1)$ with $\wt_2(\pi)=-2$.  On the right, $l(B)$ with $\wt_2(l(\pi))=-1$. The single cell counted by $\beta(l(\pi))$ is denoted with an asterisk.
}
\efi

Unfortunately, the bijections on rook placements given in this paper do not preserve the $p$-weight of a placement. In fact, the multiset of $p$-weights associated with two $m$-level rook equivalent Ferrers boards may not even be equal. Consider, for example, $m=2$ and the boards $B=(1,1)$ and $l(B)=(2)$, shown in Figure~\ref{p weight}. Clearly the two boards are $m$-level rook equivalent, because the board on the right is obtained by applying the $l$-operator to the board on the left. 
On either board, there is one way to place no rooks and two ways to place one rook.  Doing this on $B$ yields $p$-weights of $0,-2,-4$ but on $l(B)$ one obtains $0, -1,-2$.

This leads to a few related open questions. Is there another $p$-analogue of $m$-level rook placements which is preserved by the bijections given in this paper, or by similar bijections? If such a $p$-analogue exists, does it have a more ``natural'' motivation? Also, is there a factorization of the $p,q$-analogue of the  $m$-level rook polynomial using the new $p$-analogue, similar to that given in~\cite{blrs:mrp}?

Finally there are some open problems related to hit numbers. In~\cite{br:mrn}, Briggs and Remmel defined the $p,q$-hit numbers 
$h^{(m)}_{n,k}(B,p,q)$ for any singleton board $B$ that fits inside the 
rectangular board $\mathrm{Sq}_{n,m}$ by 
\begin{equation}\label{hitdef}
\sum_{k=0}^n h^{(m)}_{k,n}(B,p,q)x^k = \sum_{k=0}^n r_{k,m}(B,p,q) 
[m(n-k)]_{\downarrow_{n-k,m}}p^{m\left(\binom{k+1}{2}+k(m-k)\right)}
\prod_{\ell = n-k+1}^n (x - q^{m\ell}p^{m(n-\ell)}),
\end{equation}
where $r_{k,m}(B,p,q)$ is the $p,q$-rook number defined in 
\cite{br:mrn}, $[n]_{p,q} =\frac{p^n-q^n}{p-q}=p^{n-1}+p^{n-2}q + \cdots 
+ pq^{n-2}+q^{n-1}$ for any 
positive integer $n$, and $[mk]_{\downarrow_{k,m}} =[mk]_{p,q}[m(k-1)]_{p,q} 
\cdots [m]_{p,q}$.  They showed 
that for all singleton boards $B$ that fit inside the 
rectangular board $\mathrm{Sq}_{n,m}$, $h^{(m)}_{n,k}(B,p,q)$ is 
always polynomial in $p$ and $q$ with non-negative integer coefficients. 
In \cite{b:thesis}, Briggs gave a combinatorial 
interpretation of the $h^{(m)}_{k,n}(B,1,q)$ for any 
singleton Ferrers board $B$ that fits inside the 
rectangular board $\mathrm{Sq}_{n,m}$ as follows: 
\begin{equation}
h^{(m)}_{k,n}(B,1,q) = \sum_{R(\omega) \in H^{(m)}_{k,n}} 
q^{\xi^m_B(R(\omega))}
\end{equation}
where $\xi^m_B(R(\omega))$ can be calculated for any $R(\omega)$ as follows,
\begin{enumerate}
\item each rook $R$ that does 
not lie in $B$ cancels all the cells in its column that lie weakly below 
$R$ and outside of $B$ plus all the cells in its level 
which lie strictly to the right of $R$, and
\item each rook
$R$ that lies in $B$ cancels all the cells in its column that either lie weakly below 
$R$ or outside of $B$, and all the cells in its level 
which lie strictly to the right of $R$,  and
\item $\xi^m_B(R(\omega))$ is the number of uncanceled cells in $\mathrm{Sq}_{n,m}$.
\end{enumerate}
For example, Figure~\ref{xi} shows a case where $m =3$ and $n=4$. The placement is an element $R(\omega) \in H^{(3)}_{2,4}$. We have 
put asterisks in all the cells which are canceled, which do not already contain rooks, so that 
$\xi^3_B(R(\omega)) =9$. 
In the special case $m=1$, this statistic corresponds 
to the statistic for hit numbers on Ferrers boards due to Dworkin 
\cite{d:igr}.

\bfi
\begin{tikzpicture}[scale=.5]
\foreach \y in {0,1,2,3,4,5,6,7,8,9,10,11,12}
	\draw (0,\y)--(4,\y);
\foreach \x in {0,1,2,3,4}
	\draw (\x,0)--(\x,12);
\draw[very thick] (0,0)--(4,0);
\draw[very thick] (4,0)--(4,10);
\draw[very thick] (3,10)--(4,10);
\draw[very thick] (3,6)--(3,10);
\draw[very thick] (2,6)--(3,6);
\draw[very thick] (2,4)--(2,6);
\draw[very thick] (1,4)--(2,4);
\draw[very thick] (1,2)--(1,4);
\draw[very thick] (0,2)--(1,2);
\draw[very thick] (0,0)--(0,2);
\draw (.5,10.5) node{$R$};
\draw (1.5,2.5) node{$R$};
\draw (2.5, 7.5) node{$R$};
\draw (3.5,4.5) node{$R$};
\draw (.5,2.5) node{*};
\draw (.5,3.5) node{*};
\draw (.5,4.5) node{*};
\draw (.5,5.5) node{*};
\draw (.5,6.5) node{*};
\draw (.5,7.5) node{*};
\draw (.5,8.5) node{*};
\draw (.5,9.5) node{*};
\draw (1.5,.5) node{*};
\draw (1.5,1.5) node{*};
\draw (1.5,4.5) node{*};
\draw (1.5,5.5) node{*};
\draw (1.5,6.5) node{*};
\draw (1.5,7.5) node{*};
\draw (1.5,8.5) node{*};
\draw (1.5,9.5) node{*};
\draw (1.5,10.5) node{*};
\draw (1.5,11.5) node{*};
\draw (2.5,.5) node{*};
\draw (2.5,1.5) node{*};
\draw (2.5,2.5) node{*};
\draw (2.5,6.5) node{*};
\draw (2.5,9.5) node{*};
\draw (2.5,10.5) node{*};
\draw (2.5,11.5) node{*};
\draw (3.5,.5) node{*};
\draw (3.5,1.5) node{*};
\draw (3.5,2.5) node{*};
\draw (3.5,3.5) node{*};
\draw (3.5,6.5) node{*};
\draw (3.5,7.5) node{*};
\draw (3.5,8.5) node{*};
\draw (3.5,9.5) node{*};
\draw (3.5,10.5) node{*};
\draw (3.5,11.5) node{*};
\end{tikzpicture}
\capt{\label{xi} An example on $B=(2,4,6,10)$ with $m=3$ and $n=4$. The cancelled cells which do not contain rooks are marked with asterisks. Note there are $9$ empty cells which are uncancelled, so $\xi_B^3(R(\omega))=9$.
}
\efi

Our bijection $\theta$ of Theorem~\ref{hit numbers} between between $H^{(m)}_{k,N}(B)$ and 
$H^{(m)}_{k,N}(B')$ does not send the statistic $\xi^m_B$ 
to the statistic $\xi^m_{B'}$.  That is, it is not alway the case 
that if $R(\omega) \in H^{(m)}_{k,N}(B)$, then 
$\xi_B^m(R(\omega)) = \xi_{B'}^m(\theta(R(\omega)))$. 
Thus we ask whether it is poosible to 
define a natural bijection $\Gamma$ between $H^{(m)}_{k,N}(B)$ and 
$H^{(m)}_{k,N}(B')$ such that  
$\xi_B^m(R(\omega)) = \xi_{B'}^m(\Gamma(R(\omega)))$? Also, 
if we use (\ref{hitdef}) to define $h^{(m)}_{k,n}(B,p,q)$ for 
non-singleton Ferrers boards contained in $\mathrm{Sq}_{n,m}$, can we classify 
the collection of such boards such that $h^{(m)}_{k,n}(B,p,q)$ is 
always a polynomial in $p$ and $q$ with non-negative integer coefficients, or 
when $h^{(m)}_{k,n}(B,1,q)$ is 
always polynomial in $q$ with non-negative integer coefficients?

\bibliographystyle{alpha}
\bibliography{kenny}
\label{sec:biblio}

\end{document}